\pgfplotsset{compat=1.14}
\newtheorem{theorem}{Theorem}[section]
\newtheorem*{theorem*}{Theorem}
\newtheorem{proposition}[theorem]{Proposition}
\newtheorem{lemma}[theorem]{Lemma}
\newtheorem{corollary}[theorem]{Corollary}
\newtheorem{example}[theorem]{Example}
\theoremstyle{definition}
\newtheorem{definition}[theorem]{Definition}
\theoremstyle{remark}
\newtheorem{remark}[theorem]{Remark}
\newcounter{exocmpt}[section]
\newcounter{partcmpt}[exocmpt]
\newcounter{questioncmpt}[exocmpt]
\newcounter{subquestioncmpt}[questioncmpt]
\theoremstyle{definition}
\newcommand{\cp}{\mathbb{C}\mathbb{P}}
\newcommand{\h}{\mathbb{H}}
\newcommand{\s}{\mathbb{S}}
\newcommand{\proj}{\mathbb{P}}
\newcommand{\rp}{\mathbb{R}\mathbb{P}}
\newcommand{\cmplx}{\mathbb{C}}
\newcommand{\real}{\mathbb{R}}
\newcommand{\pinteger}{\mathbb{N}}
\newcommand{\ante}{^{-1}}
\newcommand{\etoile}{^{\ast}}
\newcommand{\sep}{\;|\;}
\newcommand{\class}{\mathcal{C}}
\newcommand{\im}{\text{im}\,}
\newcommand{\sing}{\text{Sing}\,}
\newcommand{\regind}{_{\text{reg}}}
\newcommand{\anclosure}{^{\,\text{an}}}
\newcommand{\gr}{\text{Gr}}
\newcommand{\inftyline}{\overline{\cmplx}_{\infty}}
\begin{document}

\title{On projective billiards with open subsets of triangular orbits}
\author{Corentin Fierobe}
\date\today
\maketitle


\newcommand{\ptrp}{\mathbb{P}(T\rp^2)}
\newcommand{\ptrpd}{\mathbb{P}(T\rp^d)}
\newcommand{\ptcp}{\mathbb{P}(T\cp^2)}
\newcommand{\bref}{\mathbb{P}(T\cp^2)}
\newcommand{\brefd}{\mathbb{P}(T\cp^d)}
\newcommand{\az}{\mbox{az}}
\newcommand{\negl}{\mbox{o}}
\newcommand{\huit}{\frac{1}{8}}

\def\mcc{\mathcal C}
\def\rp{\mathbb{RP}}
\def\mcf{\mathcal F}
\def\cc{\mathbb C}
\def\oc{\overline{\cc}}
\def\oci{\oc_{\infty}}
\def\cp{\mathbb{CP}}
\def\wt#1{\widetilde#1}
\def\rr{\mathbb R}
\def\var{\varepsilon}
\def\mce{\mathcal E}
\def\mct{\mathcal T}
\def\la{\lambda}

\begin{abstract}
Ivrii's Conjecture states that in every billiard in Euclidean space the set of periodic orbits has measure zero. It implies that for every $k\geq2$ there are no k-reflective billiards, i.e., billiards having an open set of k-periodic orbits. This conjecture is open in Euclidean spaces, with just few partial results. It is known that in the two-dimensional sphere there exist 3-reflective billiards (Yu.M.Baryshnikov). All the 3-reflective spherical billiards were classified in a paper by V.Blumen, K.Kim, J.Nance, V.Zharnitsky: the boundary of each of them lies in three orthogonal big circles. In the present paper we study the analogue of Ivrii's Conjecture for projective billiards introduced by S.Tabachnikov. In two dimensions there exists a 3-reflective projective billiard, the so-called right-spherical billiard, which is the projection of a spherical 3-reflective billiard. We show that the only 3-reflective planar projective billiard with piecewise smooth boundary is the above-mentioned right-spherical billiard. In higher dimensions, we prove the non-existence of 3-reflective projective billiards with piecewise smooth boundary, and also the non-existence of projective billiards with piecewise smooth boundary having a subset of triangular orbits of non-zero measure in the phase space.
\end{abstract}

\tableofcontents


\section{Introduction}

Ivrii's conjecture states that for any billiard with smooth boundary in a Euclidean space, the set of periodic orbits has measure zero. To prove this conjecture, it is enough to show that for any $k\in\pinteger$, the set of periodic orbit of period $k$ has measure $0$ (and in particular, has empty interior). We say that a billiard is $k$-reflective, if its set of $k$-periodic orbits has non-empty interior. For planar billiards, this means the existence of a two-dimensional family of $k$-periodic orbits. The proof of the conjecture was made in the case of a billiard with a regular analytic convex boundary, see \cite{vasiliev}. The conjecture is also true for $3$-periodic orbits, and was proved in \cite{bary, rychlik, stojanov, vorobets, wojtkowski}. For $4$-periodic orbits, it was proven in \cite{glutkud1, glutkud2} and a complete classification of $4$-reflective complex analytic billiards was presented in \cite{glut}.

Ivrii's conjecture was also studied in manifolds of constant curvature: it was proven to be true for $k=3$ in the hyperbolic plane $\h^2$, see \cite{VKNZ}; the case of the sphere $\s^2$ was apparently firstly studied in \cite{bary_introuvable}, as quoted in \cite{VKNZ} but we were not able to find the correponding paper. The sphere is in fact an example of space were Ivrii's conjecture is not true, see \cite{VKNZ}, which contains a classification of all the 3-reflective billiards on the sphere.

\begin{figure}[!ht]
\centering
\input{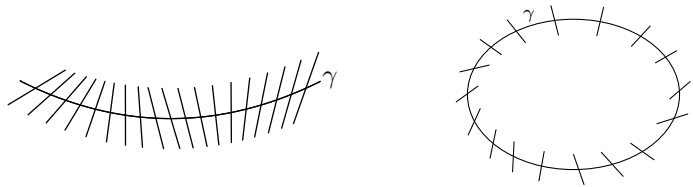}
\hspace{1cm}
\begin{tikzpicture}[line cap=round,line join=round,>=triangle 45,x=1.0cm,y=1.0cm]
\clip(-1.6,-1.2) rectangle (1.6,1.2);
\draw [rotate around={0:(0,0)}] (0,0) ellipse (1.41cm and 1cm);
\draw (-1.56,-0.1)-- (-1.27,0.11);
\draw (-1.51,0.3)-- (-1.12,0.39);
\draw (-1.24,0.73)-- (-0.96,0.53);
\draw (-0.88,0.99)-- (-0.64,0.69);
\draw (-0.3,1.18)-- (-0.2,0.79);
\draw (0.4,1.16)-- (0.31,0.78);
\draw (1.02,0.91)-- (0.77,0.64);
\draw (1.37,0.58)-- (1.09,0.42);
\draw (1.55,0.17)-- (1.28,-0.06);
\draw (1.07,-0.87)-- (0.76,-0.65);
\draw (0.66,-1.09)-- (0.37,-0.77);
\draw (0.14,-1.2)-- (0,-0.8);
\draw (-0.46,-1.15)-- (-0.39,-0.76);
\draw (-0.81,-1.03)-- (-0.79,-0.63);
\draw (-1.11,-0.85)-- (-1.03,-0.47);
\draw (-1.4,-0.54)-- (-1.24,-0.19);
\draw (1.11,-0.39)-- (1.52,-0.26);
\begin{scriptsize}
\draw[color=black] (-0.59,1.04) node {$\gamma$};
\end{scriptsize}
\end{tikzpicture}
\caption{Left: A piece of curve $\gamma$ endowed with a projective field of lines. Right: A convex closed curve $\gamma$ endowed with a projective field of lines.}
\label{fig1}
\end{figure}

In this paper we study a generalization of billiards: the so-called \textit{projective billiards} introduced in \cite{taba_projectif, taba_projectif_ball}. A {\it projective billiard} is a domain 
$\Omega\subset\rr^n$  with boundary endowed with a transverse line field $L$, called 
{\it the projective field of lines} (see Figure 1). The  reflection in planar projective billiard with a 
boundary curve $\gamma$ acts on lines 
according to the following reflection law.  Given a point $A\in\gamma$, let $L(A)$ denote the corresponding 
line of the field. Two lines $\ell$ and $\ell^*$ through $A$ are {\it symmetric} with respect to the pair 
($L(A)$, $T_A\gamma$), 
if the four lines $L(A)$, 
$T_A\gamma$, $\ell$, $\ell^*$ (identified with the corresponding 1-subspaces in $T_A\rr^2$) 
form a harmonic tuple: their cross-ratio is equal to 1. See Section 2 for more details. 

In higher dimensions the notion of 
symmetry with respect to $L(A)$  is analogous. Namely, two lines $\ell,\ell^*\subset T_A\rr^n$ 
are {\it symmetric with respect to the pair} ($L(A), T_A\partial\Omega$), if  $\ell$ ,$\ell^*$, $L(A)$ lie in the same 2-subspace 
$\Pi\subset T_A\rr^n$ and the four lines $L(A)$, $T(A):=\Pi\cap T_A\partial\Omega$, $\ell$, $\ell^*$ in $\Pi\simeq\rr^2$ 
form a harmonic tuple.
\begin{remark} 
The above harmonicity condition 
is equivalent to the condition that  there exists a linear isomorphism $H:T_A\rr^2\to \rr^2$ (respectively, 
$\Pi\to\rr^2$)  
sending $L(A)$ and $T_A\gamma$ to orthogonal lines and $\ell$, $\ell^*$ to symmetric lines with respect to 
$H(T_A\gamma)$ (or equivalently, with respect to $H(L(A))$). 
\end{remark}

 The definition of orbits for  projective billiards is the same as in the usual billiard dynamic. 
 
\begin{definition}
A {\it  $k$-orbit} in a convex projective billiard $\Omega$ 
with transversal line field $L$ on $\partial\Omega$ is a sequence of vertices $A_1,\dots,A_k\in\partial\Omega$ 
such that for every $j=2,\dots,k-1$ one has $A_j\neq A_{j\pm1}$, the lines $A_{j-1}A_j$, $A_jA_{j-1}$ 
are not tangent to $\partial\Omega$ at $A_j$  and are symmetric with respect to the pair ($L(A_j)$, $T_{A_j}\partial\Omega$). A $k$-orbit is {\it periodic,} if 
the above statements also hold  for $j=1,k$ with $A_0=A_k$, $A_{k+1}=A_1$. 
In the  general case, when $\Omega$ is not necessarily convex, a $k$-orbit ($k$-periodic orbit)
 is defined as above with the additional 
 condition that each edge $A_jA_{j+1}$ lies in $\Omega$ (except for its endpoints). 
 \end{definition}
 
 \begin{example} A billiard $\Omega\subset\rr^n$ in Euclidean space can be viewed 
 as a projective billiard, with the boundary $\partial\Omega$ being equipped with 
 the normal line field. The reflection law and orbits in  the Euclidean billiard and in the corresponding 
 projective billiard are the same.
 \end{example} 

Furthermore, as described in [14], examples of projective billiards can be obtained from metrics 
which are {\it projectively equivalent} to the Euclidean one, meaning that their geodesics are  
straight lines.
Indeed for a given metric $g$ on $\rr^n$ projectively equivalent to the Euclidean one 
and any hypersurface $\Gamma\subset\rr^n$ one can define the field of $g$-orthogonal lines to 
$\Gamma$. The projective billiard thus constructed is equivalent to the billiard on $\Gamma$ 
with the reflection acting on geodesics as the reflection in the metric $g$, (see [14], Example 7).

\begin{example} Metrics of constant curvature on space forms are projectively equivalent to 
the Euclidean metric. Indeed each complete simply connected $n$-dimensional space $\Sigma$ of non-zero 
constant curvature, i.e., a space form (sphere or hyperbolic space) is realized as the unit sphere 
(half-pseudosphere of radius -1) 
in a Euclidean (respectively, Minkowski) space $\rr^{n+1}$ (after rescaling the metric by constant factor). 
The tautological projection  $\Sigma\to\rp^n$  sends the geodesics to straight lines and thus, sends the metric 
on $\Sigma$ to a metric projectively equivalent to the Euclidean one. Consider a billiard  
 $\Omega\subset\Sigma$  with reflection acting on geodesics and  defined by the metric on $\Sigma$. 
Its boundary equipped with  the normal line field is projected to a projective billiard in $\rp^n$. 
 The billiard orbits in $\Omega$ are projected to  orbits of the latter projective billiard. 
\end{example}

\begin{definition}
A projective billiard is said to be {\it $k$-reflective,} if the set of its $k$-periodic orbits has a non-empty interior (meaning that it has a $2(n-1)$-dimensional family of $k$-periodic orbits, where $n$ is the dimension of the ambient space.). 
\end{definition}

As was already mentioned above,  the  version of Ivrii's conjecture for triangular orbits in billiards on 2-sphere is false. 
The following example represents  a 3-reflective spherical billiard given in [4] and describes its 
tautological projection, which is  a 3-reflective projective planar billiard. 

\begin{example} Cut the sphere into  8 equal parts by choosing 3 pairwise orthogonal great circles. 
Consider one of these parts, $E$, which is a  geodesic triangle with all angles being right. 
It is a 3-reflective spherical billiard, and moreover, all its orbits are 3-periodic, as was shown in [3, 4]. 
Now let us equip the great circles containing 
its sides  with normal line fields. Their tautological projections to $\rp^2$ form a triple of lines intersecting at three non-collinear points 
$P$, $Q$, $R$ and   equipped with the following fields of lines (i.e., projective billiard structure).
\end{example}

\begin{definition} Consider three non-collinear points $P$, $Q$, $R$ in the projective plane. For every point $M$  
in the line $PQ$ let 
$L(M)\subset T_M\rp^2$ be the line $MR$ through the opposite vertex $R$. The definition of line fields 
on the other lines $QR$, $RP$ is analogous. Let us now choose an affine chart $\rr^2$ containing $P$, $R$, $Q$.  
This yields a  triangle $PQR\subset\rr^2$ equipped  with a projective billiard 
structure, which will be called  the {\it right-spherical billiard}. In what follows the triple of lines 
$PQ$, $QR$, $RP$ equipped with the above line fields will be also called the right-spherical billiard.
\end{definition}
\begin{remark} All right-spherical billiards are projectively isomorphic by definition. The right-spherical billiard is 3-reflective, being the image of the above 3-reflective billiard on sphere under the tautological projection, which sends orbits to orbits. 
\end{remark}

\begin{remark}
We will give another proof of 3-reflectivity of the right-spherical billiard, found by Simon Allais (Proposition 3.3). If the reader is interested, we give in \cite{fierobe1} examples of $2n$-reflective projective billiards constructed inside polygons.
\end{remark}

Let $\Omega$ be a $k$-reflective projective billiard, and let $A_1\dots A_k$ be its $k$-periodic orbit lying in the interior of the set of $k$-periodic orbits. This means that for every $A_1'\in\partial\Omega$ and 
$A_2'\in\partial\Omega$ close to $A_1$ and $A_2$ respectively the edge $A_1'A_2'$ extends to a $k$-periodic orbit 
$A_1'\dots A_k'$ close to $A_1\dots A_k$. The latter statement depends only on {\it germs} of the boundary $\partial\Omega$ at $A_1,\dots,A_k$. This motivates the following definitions.

Consider the fiber bundle $\mathbb P(T\rr^n)$ which can be seen as the set of
pairs $(A,L)$ where $A$ is a point of $\rr^n$ and $L$ is a line through $A$,
with its natural projection $\pi:(A,L)\mapsto A\in\rr^n$.

\begin{definition} A {\it line-framed planar curve} is a  regularly embedded connected curve $\alpha
\subset\mathbb P(T\rr^2)$ with the following properties:

1) The projection $\pi$ sends $\alpha$ diffeomorphically to a regularly embedded curve $a\subset\rr^2$, 
which will be identified with $\alpha$ and called the {\it classical boundary} of the curve $\alpha$.

2) For every $(A,L)\in\alpha$ the line $L$ is transversal  to $T_Aa$. 

A {\it line-framed hypersurface} is a regularly embedded connected $(n-1)$-dimensional  
surface $\alpha\subset\mathbb P(T\rr^n)$
satisfying the above properties 1) and 2), with $a$ being a hypersurface. 

A {\it line-framed  complex analytic planar curve (hypersurface)} is a regularly embedded connected holomorphic curve 
($(n-1)$-dimensional surface) in 
$\mathbb P(T\cc^2)$ (in $\mathbb P(T\cc^n)$) satisfying the above conditions 1) and 2). 
\end{definition}

\begin{figure}[h]
\centering
\input{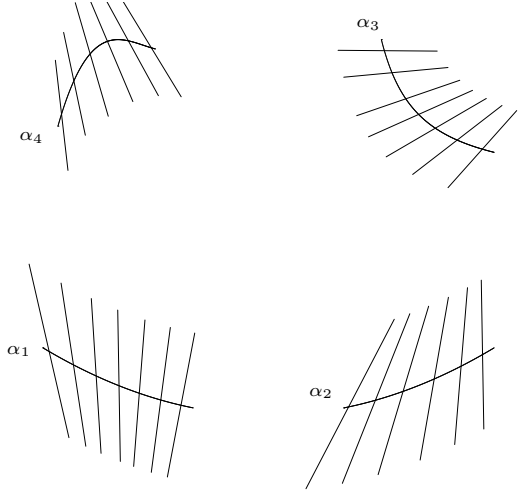}
\caption{A local projective billiard $\mathcal{B}=(\alpha_1,\alpha_2,\alpha_3,\alpha_4)$: each $\alpha_i$ is in $\bref$ and is projected on $a_i$ in $\cp^2$.}
\label{fig5}
\end{figure}

\begin{definition}[A generalization of definitions 1.5, 5.3 in \cite{glut} for projective billiards] A {\it local projective billiard} is a collection of germs of $k$ line-framed hypersurfaces 
$\alpha_1,\dots,\alpha_k\subset\mathbb P(T\rr^n)$; let $a_j$ denote their classical boundaries; see Figure \ref{fig5}. 
The billiard is called {\it smooth (analytic)}, if so are $\alpha_j$. The notion of complex-analytic local 
projective billiard is the same, with $\alpha_j$ being line-framed complex planar curves (hypersurfaces) 
in $\mathbb P(T\cc^n)$. 
 A {\it $k$-periodic orbit} of a local projective billiard is 
a $k$-tuple $(A_1,L_1)\dots (A_k,L_k)$, $(A_j,L_j)\in\alpha_j$, such that for every $j=1,\dots,k$ one has $A_j\neq A_{j\pm1}$ and the lines $A_{j-1}A_j$, $A_jA_{j+1}$ are not tangent to $a_j$ at $A_j$ and 
are  symmetric with respect to the pair $(L_j,T_{A_j}(a_j))$; 
here $A_0=A_k$, $A_{k+1}=A_1$. A local projective billiard is called {\it $k$-reflective,} if the base points 
$A_j$ of the classical boundaries $a_j$ form a $k$-periodic orbit and every pair $(A_1',A_2')\in \alpha_1\times\alpha_2$ 
close to $(A_1,A_2)$ extends to a $k$-periodic orbit $A_1'\dots A_k'$ close to $A_1\dots A_k$. When previous definition is true only for pairs $(A_1',A_2')$ lying in a subset of $\alpha_1\times\alpha_2$ of positive Lebesgue measure, we say that the local projective billiard is {\it $k$-pseudo-reflective}.

\end{definition}

\begin{example} Let $\Omega$ be a $k$-reflective projective billiard. Let $A_1\dots A_k$ be its $k$-periodic orbit. 
Then the germs at $A_j$ of its boundary equipped with the corresponding line fields form a $k$-reflective 
local projective billiard. 
\end{example}
A {\bf version of Ivrii's conjecture}  for {\bf projective billiards} 
is to {\it classify all the $k$-reflective local projective billiards.} 
In the present paper we solve it for 3-reflective local (real and complex) analytic billiards in any dimension.

\begin{theorem} 
\label{thm:main_theorem}
The local 3-reflective (real) $\class^{\infty}$-smooth planar billiards are the (real) right-spherical billiards. 
\end{theorem}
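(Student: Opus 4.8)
The plan is to reformulate the harmonic reflection law at each vertex as a projective involution of a pencil of lines, to encode $3$-reflectivity as a single identity between composed reflections holding on an open set, and then to differentiate this identity twice: the second-order terms will force the three classical boundaries to be straight, after which a short cross-ratio computation identifies the line fields. Recall first that two lines $\ell,\ell^*$ through $A$ are symmetric with respect to $(L(A),T_Aa)$ exactly when $\ell^*=\sigma_A(\ell)$, where $\sigma_A$ is the unique projective involution of the pencil $\mathbb{P}(T_A\rr^2)\cong\rp^1$ fixing $L(A)$ and $T_Aa$. I would then define the reflection $R_j$ on an open set of the space of lines of $\rp^2$ by $R_j(\ell)=\sigma_A(\ell)$ with $A=\ell\cap a_j$. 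Writing the base orbit $A_1A_2A_3$ through its three edges, $3$-reflectivity is equivalent to the relation $R_1\circ R_3\circ R_2=\mathrm{id}$ holding on a two-dimensional open set of lines (the space of lines of $\rp^2$ being two-dimensional), i.e.\ on the two-parameter family guaranteed by the definition of $3$-reflectivity.

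Next I would normalise. Since the entire set-up is projectively natural, apply a projective transformation of $\rp^2$ carrying the base orbit to the coordinate triangle $[1{:}0{:}0],[0{:}1{:}0],[0{:}0{:}1]$, so that the three edges become the coordinate lines. Evaluating the reflection relation at the base orbit recovers, at each $A_j$, the harmonic condition: $L_j$ is the harmonic conjugate of the tangent $T_{A_j}a_j$ with respect to the two edges meeting at $A_j$. Thus the tangent slope of $a_j$ is the only first-order freedom at each vertex, and I would write each $a_j$ as a graph in suitable affine coordinates whose remaining unknowns are the tangent slopes (first order), the curvatures (second order), and the one-jets of the line fields $L_j$.

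The heart of the argument is to differentiate $R_1R_3R_2=\mathrm{id}$ along the two independent directions of the family. The first derivative gives $dR_1\,dR_3\,dR_2=I$ at every point of the open set, and the second-order variation produces equations in which the curvatures of the three boundaries and the first derivatives of the line fields enter linearly. The main obstacle --- and the genuinely delicate step --- is to set up a \emph{projectively covariant} expression for the second jet of $R_j$: there is no metric, so the classical billiard Jacobi formula is unavailable and one must compute the second-order effect of the harmonic involution directly in projective coordinates, carefully separating the contributions of the varying tangent direction, the varying line field, and the curvature. I expect that, once isolated, the curvature of each $a_j$ is forced to satisfy a relation that must hold identically as the base orbit sweeps the whole open family; because this is a true two-parameter family rather than a single orbit, the vanishing is identical, so each curvature is zero on an open set and each germ $a_j$ is a straight line. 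Note that only finitely many jets are used, so $\classeinf$ (indeed finite smoothness) suffices and no analyticity is required.

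Finally, with $a_1,a_2,a_3$ pieces of three lines, non-degeneracy of the orbit triangle forces these lines to meet pairwise in three distinct non-collinear points $P,Q,R$. I would then re-impose the reflection law along the surviving one-parameter family of orbits confined to the three fixed lines: at a point $M$ of one side the harmonic condition ties $L(M)$ to the two edges emanating from $M$, and requiring this to hold for the entire family pins down $L(M)$ uniquely. A direct cross-ratio computation then shows that $L(M)$ must be the line joining $M$ to the opposite vertex, which is precisely the right-spherical structure of the Definition; the converse $3$-reflectivity of that structure being already established (Proposition 3.3), this completes the identification and proves Theorem \ref{thm:main_theorem}.
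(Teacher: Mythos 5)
Your reformulation of $3$-reflectivity as the identity $R_1\circ R_3\circ R_2=\mathrm{id}$ on an open set of lines is correct, and your final step (once the boundaries are known to be straight, a cross-ratio argument pins down the line fields) is essentially Proposition \ref{prop:reflectivity_and_lines1} of the paper. The genuine gap is the central claim that differentiating this identity twice forces the three curvatures to vanish; this is exactly the step you flag as delicate and only ``expect'' to hold, and a jet count shows it cannot hold at that order. The $k$-jet of $R_j$ at the base edge depends on the $(k+1)$-jet of the boundary $a_j$ and on the $k$-jet of the field $L_j$ at $A_j$. Hence the order-$1$ part of the identity, $dR_1\,dR_3\,dR_2=I$, consists of $4$ scalar equations in $6$ free parameters (the three curvatures $\kappa_j$ and the three field derivatives $m_j'$), and the order-$2$ part adds $6$ equations and $6$ new parameters (the $\kappa_j'$ and the $m_j''$). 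Through order $2$ you therefore have $10$ polynomial equations in $12$ unknowns: an underdetermined system, whose solution set through the right-spherical solution has dimension at least two, and nothing in your argument forces that set to lie in $\{\kappa_1=\kappa_2=\kappa_3=0\}$. Already at order $1$ the equation $dR_2=(dR_1\,dR_3)^{-1}$ leaves $\kappa_2$ unconstrained, since the right-hand side depends on the four free parameters $(\kappa_1,m_1',\kappa_3,m_3')$ and generically sweeps an open set of matrices. The transverse line fields, whose derivatives are free data (unlike the normal field of a classical billiard, which is determined by the boundary), are precisely what allows curvature to be compensated at low jet orders; the system only becomes overdetermined at order $k\geq 3$, where each order contributes $2(k+1)>6$ equations but only $6$ new unknowns.

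Consequently your plan, as written, does not prove the theorem, and completing it would amount to redoing the paper's work: one must control the full infinite hierarchy of jet conditions, not a finite piece of it. The paper handles this by first proving the analytic case through a global argument (Birkhoff's distribution restricted to the analytic closure of the orbit set, Remmert's theorem, degeneration to a billiard possessing a ``flat'' orbit, and the azimuth asymptotics $z'\sim -z$, $z^{\ast}\sim(2I_b-1)z$, $z^{\ast}\sim(2I_c-1)z'$, whose incompatibility gives the contradiction), and then passing from $\class^{\infty}$ to analytic by the Cartan--Kuranishi--Rashevsky prolongation theorem, which requires infinitely many prolongations. Your closing remark that only finitely many jets are used, so that finite smoothness suffices, is a symptom of the same gap: a finite-jet proof would yield a strictly stronger theorem than the one proved here, and none is known. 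To rescue your approach you would have to exhibit, at some explicit finite order, jet identities that force $\kappa_j=0$; the count above rules this out at order $2$.
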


\begin{theorem} 
\label{thm:main_theorem_multidim}
There are no 3-pseudo-reflective local $\class^{\infty}$-smooth (real) projective billiards in $\rr^d$ with $d\geq3$.
\end{theorem}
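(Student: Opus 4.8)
The plan is to exploit the fact that, in dimension $d\geq3$, the reflection law forces a coplanarity constraint on the line field which is generically impossible to satisfy along a positive-measure family of orbits, whereas in the plane the very same constraint is vacuous.

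First I would extract the key geometric fact directly from the reflection law. Let $A_1A_2A_3$ be a $3$-periodic orbit with vertices $A_j\in a_j$, and let $\Pi=\mathrm{span}(A_1,A_2,A_3)$ be its (two-dimensional) plane; the triangle is nondegenerate since consecutive vertices differ and the edges are not tangent. At the vertex $A_j$ the two edges $A_{j-1}A_j$ and $A_jA_{j+1}$ are precisely the lines $\ell,\ell^{\ast}$ of the reflection law, and by definition these are required to lie in a common $2$-plane together with $L_j(A_j)$. Since the two edges at $A_j$ already span $\Pi$, this common plane must be $\Pi$ itself, so $L_j(A_j)\subset\Pi$ for $j=1,2,3$. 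In particular $L_1(A_1)\subset\Pi$ and $L_2(A_2)\subset\Pi$, while $\Pi$ also contains the line $A_1A_2$; as $A_1A_2$ together with either field direction spans $\Pi$, the three directions $L_1(A_1)$, $A_2-A_1$ and $L_2(A_2)$ are linearly dependent in $\rr^d$. Thus every orbit pair $(A_1,A_2)$ satisfies the closed condition
\[
C(A_1,A_2):\quad \mathrm{rk}\,\bigl[\,L_1(A_1)\ \big|\ A_2-A_1\ \big|\ L_2(A_2)\,\bigr]\leq 2 .
\]

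Now comes the dimension count, which is where $d\geq3$ enters. The pair $(A_1,A_2)$ ranges over $a_1\times a_2$, of dimension $2(d-1)$, while the locus of $d\times3$ matrices of rank at most $2$ has codimension $(d-2)(3-2)=d-2$. Hence, if the natural defect map measuring $C$ is transverse to this locus, the set of pairs satisfying $C(A_1,A_2)$ has dimension $2(d-1)-(d-2)=d$, which is strictly less than $2(d-1)$ exactly when $d\geq3$, and therefore has Lebesgue measure zero. This would contradict $3$-pseudo-reflectivity, which demands a positive-measure set of orbit pairs. It is instructive that for $d=2$ the codimension $d-2$ equals $0$: three vectors in $\rr^2$ are always dependent, so $C$ is vacuous and no obstruction arises, consistent with the existence of the planar right-spherical billiard of Theorem \ref{thm:main_theorem}.

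The main obstacle is that in the $\class^{\infty}$ category transversality can fail: a priori the defect map could vanish to infinite order along a positive-measure set without being identically zero, so measure-zero of $\{C=0\}$ is not automatic from smoothness alone. To close this gap I would show that the defect function has nonvanishing differential at generic points, using two independent ingredients: the transversality $L_j(A_j)\pitchfork T_{A_j}a_j$ built into the definition of line-framed hypersurface, which guarantees that $L_2(A_2)$ moves genuinely transversally to $a_2$ as $A_2$ varies; and the reflection law at the remaining vertex, which determines $A_3$ as the intersection of the reflected ray from $A_2$ with $a_3$ and imposes the further independent coplanarity $L_3(A_3)\subset\Pi$. Differentiating the defect in the $A_1$- and $A_2$-directions separately and invoking these properties, one should check that its differential cannot vanish on an open set, so that its zero set meets every ball in a set of measure zero, making positive measure impossible. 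I expect this nondegeneracy verification, namely upgrading the positive-measure hypothesis into a genuine codimension statement via the reflection law and the transversality of the fields, to be the technical heart of the argument, the coplanarity reduction and the codimension count being comparatively short.
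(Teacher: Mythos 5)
Your first paragraph is correct and is exactly the paper's starting point (its Lemma \ref{lemma:planar_orbits}): at each vertex the two edges span the orbit plane $\Pi$, so the reflection law forces $L_j(A_j)\subset\Pi$, and in particular the three directions $L_1(A_1)$, $A_2-A_1$, $L_2(A_2)$ are dependent. The rest of the argument, however, has a genuine gap: the transversality you hope to verify is simply false, and not just at non-generic points. Take $a_1,a_2$ to be pieces of two hyperplanes, fix a point $P\notin a_1\cup a_2$, and define the line fields by $L_1(A)=AP$, $L_2(B)=BP$. These are legitimate line-framed hypersurfaces (each $L_i$ is transverse to the hyperplane), yet for \emph{every} pair $(A_1,A_2)$ the three lines $L_1(A_1)$, $A_1A_2$, $L_2(A_2)$ lie in the plane through $A_1,A_2,P$, so your defect map vanishes identically on the full $2(d-1)$-dimensional space of pairs. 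Hence the set $\{C=0\}$ can have full measure, the codimension count collapses, and no amount of differentiating the defect in the $A_1$- and $A_2$-directions can rescue it. Worse, this configuration is not a peripheral pathology: the paper proves (its Lemma 6.4) that any $3$-reflective billiard in $\rr^d$, $d\geq3$, must have exactly this structure, with classical boundaries in hyperplanes and line fields through fixed points $A_0$, $B_0$. The actual contradiction is then that $A_0=B_0$ is forced to lie in the hyperplane $\hat a$ supporting $a$, which makes $L_A\subset T_Aa$ and violates the transversality in the definition of a line-framed hypersurface. In other words, coplanarity is only a necessary condition; the obstruction lives one level deeper and requires the full planar classification (Theorem \ref{thm:main_theorem_analytic}: every planar $3$-reflective billiard is right-spherical) applied inside each orbit plane.

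There is a second, independent gap concerning the $\class^{\infty}$ hypothesis. The theorem assumes only $3$-pseudo-reflectivity, i.e.\ a positive-measure set of triangular orbits, and in the smooth category one cannot upgrade this to an open set or to analyticity by elementary means. The paper handles this by encoding triangular orbits as (pseudo-)integral surfaces of a Pfaffian system (Birkhoff's distribution), invoking the prolongation result of Glutsyuk--Kudryashov for pseudo-integral surfaces together with the Cartan--Kuranishi--Rashevsky theorem to produce an \emph{analytic} integral surface, hence an analytic $3$-reflective billiard, which contradicts the analytic multidimensional theorem. Your proposal, by contrast, tries to settle the smooth case by a pointwise nondegeneracy of the defect map, which, as explained above, fails even for analytic data; so both the reduction step and the smooth-to-analytic step of a correct proof are missing.
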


In the paper we will first prove their analytic versions, and extend them to the $\class^{\infty}$-smooth case:

\begin{theorem} 
\label{thm:main_theorem_analytic}
The local 3-reflective real (complex) analytic planar billiards are the real (complex) 
right-spherical billiards. 
\end{theorem}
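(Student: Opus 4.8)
The plan is to work in the complex-analytic category over $\cp^2$ and $\ptcp$ and to prove that the only solutions are the complex right-spherical billiards; the real-analytic statement of Theorem~\ref{thm:main_theorem_analytic} then follows by taking real forms, since a real-analytic line-framed curve with its line field complexifies uniquely and the real right-spherical billiard is the real locus of the complex one. Throughout I use the coordinate-free form of the reflection law: at a vertex $A_j$ the reflection is the unique projective involution $\sigma_j$ of the pencil of lines through $A_j$ whose fixed lines are $L_j:=L(A_j)$ and $t_j:=T_{A_j}a_j$.

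First I would reduce $3$-reflectivity to two analytic functional identities. Given $(A_1,A_2)$ near the base orbit, the edge $A_1A_2$ together with $\sigma_2$ determines the reflected line $\sigma_2(A_1A_2)$, hence the third vertex $A_3=A_3(A_1,A_2)$ as its intersection with $a_3$; this uses only the reflection at $A_2$ and imposes no constraint. Closing the triangle is then exactly the requirement that $\sigma_3(A_2A_3)$ pass through $A_1$ and that $\sigma_1(A_3A_1)=A_1A_2$, and $3$-reflectivity demands that these hold identically in $(A_1,A_2)$. Choosing on each pencil the projective coordinate $\Theta_j$ that sends $L_j$ and $t_j$ to $0$ and $\infty$, so that $\sigma_j$ becomes $\Theta\mapsto-\Theta$, the three reflection laws take the symmetric additive form
\[
\Theta_1(A_2)+\Theta_1(A_3)=0,\qquad \Theta_2(A_3)+\Theta_2(A_1)=0,\qquad \Theta_3(A_1)+\Theta_3(A_2)=0,
\]
where $\Theta_i(P)$ is the normalized angular coordinate at $A_i$ of the direction $A_iP$. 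Here the normalization at $A_i$ encodes the two unknown first-order data $L_i,t_i$, while the dependence of $\Theta_i(A_j(\cdot))$ on the parameter of $a_j$ encodes the projective shape of $a_j$ as seen from $A_i$.

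The heart of the argument is forcing the classical boundaries to be straight. For this I would pass to the projective billiard map $T$ of Tabachnikov \cite{taba_projectif} and use that $3$-reflectivity means $T^3=\mathrm{id}$ on an open set, whence $dT^3=\mathrm{id}$ there. Writing $dT$ at each reflection through the projective analogue of the mirror/Jacobi equation (as in the Euclidean $k=3$ arguments \cite{wojtkowski, stojanov}), the first-order part of the additive identities above pins down the derivative of each $L_j$ in terms of $t_j$, and the second-order part brings in the projective curvature of each $a_j$. Requiring $dT^3=\mathrm{id}$ not at one orbit but along the whole two-parameter family overdetermines this system, and I expect it to force the projective curvature of each $a_j$ to vanish identically, so that $a_1,a_2,a_3$ are pieces of straight lines $\ell_1,\ell_2,\ell_3$. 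This is the step I expect to be the main obstacle: the reflection law is projectively nonlinear, the unknown line fields must be disentangled from the unknown curvature of the curves, and one must upgrade the formal (all-orders) consequence to a genuine analytic conclusion. Concretely, after differentiation the identities in the $\Theta_i$ should collapse to a rigidity statement of additive/Cauchy type whose only solutions compatible across the family are the Möbius ones, i.e.\ lines.

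Finally, with $a_j=\ell_j$ straight the problem becomes the finite projective classification of three lines carrying transverse line fields that admit a two-parameter family of triangular orbits. I would first exclude the concurrent configuration (three lines through a common point do not support an open family of genuine triangular orbits satisfying the transversality built into the definition of a line-framed curve). In the non-concurrent case the lines bound a triangle with vertices $P=\ell_2\cap\ell_3$, $Q=\ell_3\cap\ell_1$, $R=\ell_1\cap\ell_2$, and the additive identities, now between honest Möbius coordinates, are solvable for all $(A_1,A_2)$ precisely when the fixed point $0$ of each $\sigma_j$, namely $L_j(A_j)$, is the line joining $A_j$ to the opposite vertex. This is exactly the right-spherical billiard. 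Since all right-spherical billiards are projectively isomorphic, this completes the complex-analytic classification, and passing to real forms gives Theorem~\ref{thm:main_theorem_analytic}.
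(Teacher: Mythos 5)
Your overall skeleton (reduce to the complex-analytic case, prove the classical boundaries are lines, then classify the linear configurations) matches the paper's, and your final step is essentially the paper's Section~\ref{sec:germ_line}: excluding the concurrent configuration and showing that in the non-concurrent case each line field must point at the opposite vertex is exactly the content of Proposition~\ref{prop:reflectivity_and_lines1}. But the central step — forcing $a_1,a_2,a_3$ to be straight — is where your proposal has a genuine gap. You propose to differentiate $T^3=\mathrm{id}$ along the two-parameter family and argue that the resulting ``overdetermined'' system forces the projective curvature of each $a_j$ to vanish, but this is stated as an expectation, not proved, and as described it is unlikely to close. The reflection law involves the unknown transverse line fields $L_j$ as functional degrees of freedom on each boundary; at every finite jet order the identities couple derivatives of the $L_j$ with the curvature of the $a_j$, and one can generically solve for the $L_j$-jets so as to absorb nonzero curvature. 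The existence of the right-spherical billiard already shows your additive identities admit nontrivial solutions, so any rigidity argument must pin down curves and line fields simultaneously; ``Cauchy-type rigidity'' of the $\Theta_i$ does not follow from finitely many differentiations at an orbit, nor even from formal (all-orders) identities at one point, since 3-reflectivity is a condition on an open family.

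This is precisely why the paper's proof of the straightness step is global-analytic rather than jet-theoretic. It introduces Birkhoff's distribution on $\alpha\times\mathcal{P}^2$, restricts it to the analytic closure $M$ of the orbit set, bounds $\dim M\le 4$ via Remmert's proper mapping theorem and Chow's theorem (Proposition~\ref{prop:finite_fibers}), shows the restricted distribution is $2$-dimensional and hence integrable (Proposition~\ref{prop:dimension_birkhoff_ditrib}, Corollary~\ref{cor:birkhoff_integrable}), and then uses integrability to manufacture a \emph{different} 3-reflective billiard $\mathcal{B}_0=(\alpha,\beta_0,\gamma_0)$ owning a flat orbit on a tangent line $T_{A_0}a$ (Proposition~\ref{prop:existence_flat_orbit}). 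The contradiction then comes not from a differential identity but from asymptotics of azimuths near the flat orbit: $z'\sim -z$, $z^{\ast}\sim(2I_b-1)z$, $z^{\ast}\sim(2I_c-1)z'$ (Proposition~\ref{prop:three_equivalences}), where $I_b,I_c$ are intersection indices of $b_0,c_0$ with $T_{A_0}a$, yielding $2I_c-1=-(2I_b-1)$, impossible for positive integers. The intersection index is an analytic invariant invisible to the finite-jet scheme you outline; without something playing its role (or the degenerate-orbit construction that makes it available), your middle step does not go through.
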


\begin{theorem} 
\label{thm:main_theorem_multidim_analytic}
There are no 3-reflective local analytic real (complex) projective billiards in $\rr^d$ ($\cc^d$) with $d\geq3$.
\end{theorem}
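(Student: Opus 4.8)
The plan is to reduce everything to the planar classification of Theorem~\ref{thm:main_theorem_analytic} by slicing with the $2$-planes carried by the orbits, and then to show that this planar rigidity cannot propagate to a genuinely higher-dimensional configuration. The backbone is a \emph{coplanarity lemma}: for any $3$-periodic orbit $A_1'A_2'A_3'$ of the local billiard, the reflection law at each vertex $A_j'$ forces $L_j(A_j')$ to lie in the $2$-plane $\Pi'=\langle A_1',A_2',A_3'\rangle$ spanned by the orbit, since by the definition of symmetry $L_j$ must be coplanar with the two edges $A_{j-1}'A_j'$ and $A_j'A_{j+1}'$, whose span is exactly $\Pi'$. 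Thus along every orbit all three line fields are confined to the orbit plane.

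First I would produce a genuine planar $3$-reflective billiard inside the reference plane $\Pi_0=\langle A_1,A_2,A_3\rangle$. Keeping $A_2'=A_2$ fixed and letting $A_1'$ run over $a_1\cap\Pi_0$, the orbit plane contains both $L_2(A_2)\subset\Pi_0$ and the edge $A_1'A_2\subset\Pi_0$, hence equals $\Pi_0$; by the coplanarity lemma the entire orbit, and with it the values $L_1(A_1')$ and $L_3(A_3')$, stays in $\Pi_0$. Cycling the three indices shows $L_j$ is tangent to $\Pi_0$ all along $\gamma_j:=a_j\cap\Pi_0$, so $(\gamma_1,\gamma_2,\gamma_3,L_j|_{\gamma_j})$ is an honest analytic planar projective billiard in $\Pi_0\cong\cc^2$; its in-plane orbits (take $A_1'\in\gamma_1$, $A_2'\in\gamma_2$) form a two-parameter family, so it is $3$-reflective. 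By Theorem~\ref{thm:main_theorem_analytic} it is the right-spherical billiard: the $\gamma_j$ are segments of three lines forming a triangle $PQR$, and $L_j$ is the pencil of cevians to the opposite vertex. The same argument applies verbatim to \emph{every} orbit plane $\Pi'$: each $a_j\cap\Pi'$ is a straight line and $L_j$ restricted to it is the cevian field aimed at $V_j(\Pi'):=\Pi'\cap e_j$, where $e_j:=a_{j-1}\cap a_{j+1}$ is the ``opposite edge''. In particular $L_j(M)$ always passes through a point of $e_j$.

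Now the contradiction. For $d\ge3$ the orbit planes form a family of dimension $2(d-1)-2=2d-4\ge2$, since the orbits form a $2(d-1)$-dimensional family and each orbit plane carries a two-dimensional subfamily. I would first settle $d=3$, where every $a_j$ is a surface cut in a straight line by a two-parameter family of planes; a germ of smooth surface in $\cc^3$ carrying a two-dimensional family of lines is planar, so $a_1,a_2,a_3$ are three planes meeting at a point $O$, the $e_j$ being three lines through $O$. Fix $M\in a_1$: every orbit plane through $M$ contains $L_1(M)$, and $L_1(M)$ meets $e_1$ in the single point $R:=L_1(M)\cap e_1$, so $V_1(\Pi')=R$ for all such $\Pi'$. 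Propagating this concurrency over the two-parameter family of sides $a_1\cap\Pi'$ should force $L_1$ to be \emph{central}, i.e.\ all its lines pass through one fixed point $R_1\in e_1$; symmetrically $L_2,L_3$ are central toward fixed points $R_2\in e_2$, $R_3\in e_3$. But then $V_j(\Pi')=R_j$ imposes $R_1,R_2,R_3\in\Pi'$, so $\Pi'=\langle R_1,R_2,R_3\rangle$ is unique, contradicting the two-parameter family. The general case either reduces to $d=3$ by restricting to a generic $3$-flat through $\Pi_0$ (carrying the fields along via the coplanarity lemma), or runs the centrality argument directly.

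The main obstacle is exactly the centrality step: proving that a transverse line field which is cevian (concurrent) along a full $(2d-4)$-parameter family of lines must be central. For $d=2$ this is vacuous because the opposite edge $e_j$ is already a single point, which is precisely why the planar right-spherical billiard survives; the entire content of the higher-dimensional non-existence is that a positive-dimensional $e_j$ cannot be reconciled with the concurrency forced on every orbit plane. I expect this step to require a careful study of the Gauss-type map $M\mapsto L_j(M)\cap e_j$ together with the analytic identity principle, promoting ``concurrency along many lines'' to genuine centrality, after which the dimension count closes the argument.
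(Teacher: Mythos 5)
Your strategy is the paper's own: your coplanarity lemma is the paper's Lemma \ref{lemma:planar_orbits}; slicing by orbit planes, checking that each slice is a planar $3$-reflective billiard, and invoking Theorem \ref{thm:main_theorem_analytic} to identify it as right-spherical is exactly the content of the paper's Lemma \ref{lemma:hypersurf_planes}; and planarity of the classical boundaries comes from the same ``lines through a point filling an open set of directions'' phenomenon (proved in the paper as Lemma \ref{lemma:lines_and_tangent}, where you instead quote the classical fact about surfaces carrying a two-dimensional family of lines). The one genuine gap is the step you yourself flag as the main obstacle: centrality of the fields is asserted, not proved. But you overestimate its difficulty --- no Gauss-map analysis is needed, and you already hold every ingredient. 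Fix $M\in a_1$. You showed that every orbit plane $\Pi'$ through $M$ satisfies $V_1(\Pi')=L_1(M)\cap e_1=:R$; since each such slice is right-spherical, every $A'\in a_1\cap\Pi'$ has $R\in L_1(A')$. As $\Pi'$ runs over the orbit planes through $M$, the lines $a_1\cap\Pi'$ fill an open set of directions in $T_Ma_1$ (the very fact you used to prove planarity), so their union covers a neighborhood of $M$ in $a_1$, and analyticity propagates the concurrency $R\in L_1(\cdot)$ to the whole germ. This is exactly how the paper proves its centrality lemma (the existence of the points $A_0$ and $B_0$), so the ``main obstacle'' dissolves in three lines.

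Two further repairs are needed. First, your endgame ``the orbit plane equals $\langle R_1,R_2,R_3\rangle$, hence is unique'' tacitly assumes $R_1,R_2,R_3$ non-collinear; if, say, all three coincided with the common point $O$ of the three planes, then every plane through $O$ would be admissible and no contradiction would follow from the dimension count. This is rescued by remarking that $R_j=V_j(\Pi')$ are the vertices of the right-spherical triangle of any slice, and Definition \ref{def:spherical_billiard} (equivalently, the exclusion of concurrent lines in Case 1 of the proof of Proposition \ref{prop:reflectivity_and_lines1}) forces these vertices to be distinct and non-collinear. Second, your proposed reduction of $d>3$ to $d=3$ by restricting to a generic $3$-flat $H$ does not work as stated: the coplanarity lemma makes the fields tangent to orbit planes, not to $H$, so for $d>3$ the locus of points of $a_j\cap H$ whose field line lies in $H$ is a proper subvariety, and the restriction to $H$ is not a projective billiard along any open set. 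The paper sidesteps this by running the hyperplane-plus-centrality argument directly in dimension $d$, with a cleaner, dimension-free contradiction: the two centers coincide ($A_0=B_0$), so each $L_A$ joins two points of the hyperplane $\hat a$ and therefore lies in $T_Aa$, contradicting the transversality required of a line-framed hypersurface. Your contradiction is correct once the above points are fixed, but the paper's is both shorter and uniform in $d$.
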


\begin{remark}
\label{remark:introducing_contruction_pseudo}
An important corollary of the existence of $3$-reflective planar projective billiards is the existence of planar projective billiards which are not $3$-reflective but whose set of triangular orbits has non-zero measure in the space of initial conditions. See Remark \ref{remark:construction_pseudo} for further details. This is not true in the class of classical billiards, see \cite{VKNZ, rychlik, stojanov}.
\end{remark}

\noindent \textbf{Plan of the article.} We first give precise definitions of the projective reflection law (Section \ref{sec:reflection_law}). Then we prove Theorem \ref{thm:main_theorem} in the case when the classical boundaries of the projective billiard are supported by lines (Propositions \ref{prop:reflectivity_and_lines1} and \ref{prop:reflectiv_spherical} in Section \ref{sec:germ_line}). 

After that, we prove Theorem \ref{thm:main_theorem_analytic} in the following way: we suppose that we are given a $3$-reflective local analytic projective billiard $\mathcal{B}=(\alpha,\beta,\gamma)$ such that one of the classical boundaries, say the classical boundary $a$ of $\alpha$, is not a line. We study a certain analytic distribution, called Birkhoff's distribution: it is constructed so that open sets of triangular orbits of 3-reflective projective billiards yield its two-dimensional integral surfaces (see Section \ref{sec:birkhoff_distribution}). We use methods of analytic geometry (like Remmert's proper mapping theorem) to prove that this distribution is of dimension $2$ on the analytic closure $M$ of the set of triangular orbits of $\mathcal{B}$ (Proposition \ref{prop:dimension_birkhoff_ditrib}); and this allows us to conclude that it is integrable on $M$ (Corollary \ref{cor:birkhoff_integrable}). Finally, we exhibit a particular integral surface of Birkhoff's distribution (Subsection \ref{subsec:existence_integral}) corresponding to a certain projective billiard $\mathcal{B}_0=(\alpha,\beta_0,\gamma_0)$ where the classical boundary $b_0$ of $\beta_0$ intersects a certain tangent line $L$ of $a$. But such projective billiard cannot be $3$-reflective, as we show in Section \ref{subsec:integral_not_reflective}, using asymptotic comparisons of complex angles (or azimuths) of orbits whose vertices converge to $L$.

Then we prove Theorem \ref{thm:main_theorem_multidim_analytic} in Section \ref{sec:multidim}, using Theorem \ref{thm:main_theorem_analytic} and the fact that any triangular orbit has its vertices in the same plane.

Finally we extend Theorems \ref{thm:main_theorem_analytic} and \ref{thm:main_theorem_multidim_analytic} to Theorems \ref{thm:main_theorem} and \ref{thm:main_theorem_multidim} using the theorem of Cartan-Kuranishi-Rachevskii on prolongation of Pfaffian systems applied to spaces of jets of germ of surfaces, as described in Section \ref{sec:smooth_case}. More precisely, we show that the $r$-jets of a germ of a smooth integral surface of a Pfaffian system can be approximated by $r$-jets of germs of analytic integral surfaces as close as possible, see Proposition \ref{prop:analytic_approx}. This together with Theorems \ref{thm:main_theorem_analytic} and \ref{thm:main_theorem_multidim_analytic} can be used to prove Theorems \ref{thm:main_theorem} and \ref{thm:main_theorem_multidim}.

\section{Projective law of reflection}
\label{sec:reflection_law}

We first recall some definitions about the cross-ratio of four lines in $\cp^2$, we introduce harmonic quadruples of lines, and then give a formula for the projective symmetry of lines.

\begin{definition}
Fix any set of coordinates on $\cp^1$. Given three distinct points $X_2$, $X_3$, $X_4$ of $\cp^1$ and a fourth point $X_1$, the cross-ratio $(X_1,X_2;X_3,X_4)$ is defined to be the value $h(X_1)$ where $h:\cp^1\to\cp^1$ is the only projective transformation sending $X_2$ to $1$, $X_3$ to $0$ and $X_4$ to $\infty$.
\end{definition}

\begin{remark}
The cross-ratio is a projective invariant in the sense that for any projective transformation $f:\cp^1\to\cp^1$, it satisfies $(f(X_1),f(X_2);f(X_3),f(X_4))=(X_1,X_2;X_3,X_4)$, and hence doesn't depend on the initial choice of coordinates of $\cp^1$.
\end{remark}

\begin{definition}
\label{def:cross_ratio}
The \textit{cross-ratio of a quadruple of four distinct lines} $(\ell_1,\ell_2,\ell_3,\ell_4)$ intersecting at a point $O$, is the cross-ratio of the quadruple formed by their intersection points $X_1$, $X_2$, $X_3$, $X_4$ with a fifth line $\ell$ not containing $O$, taken in the same order :
$$(\ell_1,\ell_2;\ell_3,\ell_4) = (X_1,X_2;X_3,X_4).$$
It does not depend on the choice of $\ell$.
\end{definition}

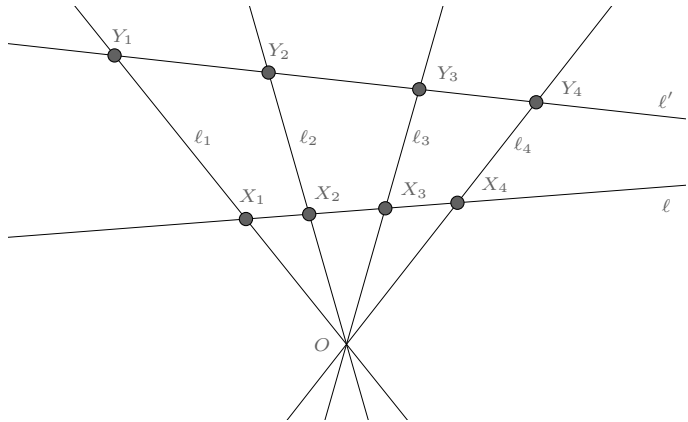
\begin{figure}[h]
\centering
\definecolor{black}{rgb}{0.08235294117647059,0.396078431372549,0.7529411764705882}
\definecolor{black}{rgb}{0.3803921568627451,0.3803921568627451,0.3803921568627451}
\begin{tikzpicture}[line cap=round,line join=round,>=triangle 45,x=0.5cm,y=0.5cm]
\clip(-9,-2) rectangle (9,9);
\draw [line width=0.3pt,domain=-13.57:15.89] plot(\x,{(--46--1*\x)/13});
\draw [line width=0.3pt,domain=-13.57:15.89] plot(\x,{(--126-2*\x)/18});
\draw [line width=0.3pt,domain=-13.57:15.89] plot(\x,{(-0--3.332705882352941*\x)/-2.674823529411764});
\draw [line width=0.3pt,domain=-13.57:15.89] plot(\x,{(-0--3.4621176470588235*\x)/-0.9924705882352942});
\draw [line width=0.3pt,domain=-13.57:15.89] plot(\x,{(-0--3.6177647058823528*\x)/1.0309411764705878});
\draw [line width=0.3pt,domain=-13.57:15.89] plot(\x,{(-0--3.765294117647059*\x)/2.948823529411765});
\begin{scriptsize}
\draw[color=black] (-0.65,0) node {$O$};
\draw [fill=black] (-2.674823529411764,3.332705882352941) circle (2.5pt);
\draw[color=black] (-2.5,3.92) node {$X_1$};
\draw [fill=black] (-0.9924705882352942,3.4621176470588235) circle (2.5pt);
\draw[color=black] (-0.5,4) node {$X_2$};
\draw [fill=black] (1.0309411764705878,3.6177647058823528) circle (2.5pt);
\draw[color=black] (1.8,4.1) node {$X_3$};
\draw [fill=black] (2.948823529411765,3.765294117647059) circle (2.5pt);
\draw[color=black] (3.95,4.25) node {$X_4$};
\draw [fill=black] (-6.1682571399042265,7.685361904433803) circle (2.5pt);
\draw[color=black] (-5.95,8.18) node {$Y_1$};
\draw [fill=black] (-2.0726787719955073,7.230297641332833) circle (2.5pt);
\draw[color=black] (-1.79,7.8) node {$Y_2$};
\draw [fill=black] (1.9335427742870943,6.7851619139681) circle (2.5pt);
\draw[color=black] (2.7,7.2) node {$Y_3$};
\draw [fill=black] (5.043259557344064,6.4396378269617705) circle (2.5pt);
\draw[color=black] (6,6.8) node {$Y_4$};
\draw[color=black] (-3.8,5.5) node {$\ell_1$};
\draw[color=black] (-1,5.5) node {$\ell_2$};
\draw[color=black] (2,5.5) node {$\ell_3$};
\draw[color=black] (4.7,5.3) node {$\ell_4$};
\draw[color=black] (8.5,3.7) node {$\ell$};
\draw[color=black] (8.5,6.5) node {$\ell'$};
\end{scriptsize}
\end{tikzpicture}
\caption{$(\ell_1,\ell_2;\ell_3,\ell_4) = (X_1,X_2;X_3,X_4) = (Y_1,Y_2;Y_3,Y_4).$ The definition of the cross-ratio does not depend on the chosen line $\ell$.}
\label{fig3}
\end{figure}

\noindent We say that a quadruple $(\ell_1,\ell_2,\ell_3,\ell_4)$ of distinct lines is \textit{harmonic} if its cross-ratio is $-1$ : 
$$(\ell_1,\ell_2;\ell_3,\ell_4) = -1.$$
If now we fix a set of coordinates on the line $\ell$ of Definition \ref{def:cross_ratio} such that each intersection point $X_i$ of $\ell_i$ with $L$ can be identified with $z_i\in\cp^1$, then $(\ell_1,\ell_2,\ell_3,\ell_4)$ is harmonic if and only if
\begin{equation}
\label{eq:harmonic_formula}
z_4 = \frac{(z_1+z_2)z_3-2z_1z_2}{2z_3-(z_1+z_2)}.
\end{equation}
Using Equation \eqref{eq:harmonic_formula}, we define the symmetry of lines at a point $A\in\cp^2$ : 

\begin{definition}
Let $L$, $T$ be two distinct lines through $A$. Two lines $(\ell,\ell')$ are said to be symmetric with respect to $(L,T)$ if either $\ell=\ell'=T$, or $\ell=\ell'=L$, or $(L,T,\ell,\ell')$ is a harmonic quadruple of distinct lines.
\end{definition}

\begin{remark}
\label{remark:symmetric_cross_ratio}
Note that the three conditions are coherent with Formula \eqref{eq:harmonic_formula}. It can also be easily shown that the cross-ratio $(L,T,\ell,\ell')$ is invariant by permutations of the lines $L$ and $T$, or of the lines $\ell$ and $\ell'$ or of the pairs $(L,T)$ and $(\ell,\ell')$.
\end{remark}

Finally let us introduce the azimuth of a line. We consider a set of coordinates on $\cp^2$ such that $\cp^2$ can be seen as the disjoint union of $\cmplx^2$ and a complex line, called \textit{infinity line} and denoted by $\inftyline$. A \textit{finite} (respectively, an \textit{infinite}) point of $\cp^2$ will be a point in $\cmplx^2$ (respectively, in $\inftyline$).

\begin{definition}
A complex line $d$ of $\cp^2$ ditinct from the infinity line intersects $\inftyline$ at a unique point called the \textit{azimuth} of $d$, and written $\az(d)$.
\end{definition}

Now fix two lines $L$ and $T$ in $\cp^2$, distinct from $\inftyline$ and intersecting at a finite point $A$. The set $A^{\ast}$ of all lines passing through $A$ can be identified with $\cp^1$, via the map 
\begin{equation}
\label{eq:id_symmetry}
d\in A^{\ast}\mapsto\az(d).
\end{equation}

\begin{proposition}
\label{prop:symmetry}
Via the identification \eqref{eq:id_symmetry}, the symmetry of lines with respect to $(L,T)$ is the unique nontrivial complex conformal involution of $A^{\ast}$, which has $L$ and $T$ as fixed points. It is of the form
\begin{equation}
\label{eq:form_symmetry}
z\mapsto\frac{(\ell+t)z-2\ell t}{2z-(\ell+t)}
\end{equation}
where $\ell$ and $t$ are the azimuths of $L$ and $T$ respectively.
\end{proposition}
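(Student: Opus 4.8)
The plan is to reduce the entire statement to the harmonic formula \eqref{eq:harmonic_formula} by choosing the transversal line cleverly, and then to read off the Möbius form and its properties. First I would exploit that $A$ is a \emph{finite} point, so $A\notin\inftyline$: this means $\inftyline$ is a legitimate transversal for computing, in the sense of Definition \ref{def:cross_ratio}, the cross-ratio of four concurrent lines $L,T,\ell,\ell'$ through $A$. By definition the intersection points of these lines with $\inftyline$ are exactly their azimuths $\ell=\az(L)$, $t=\az(T)$, $z=\az(\ell)$, $z'=\az(\ell')$, so the cross-ratio of the four lines equals the cross-ratio $(\ell,t;z,z')$ of their azimuths. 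This is the conceptual heart of the argument: the identification \eqref{eq:id_symmetry} turns the cross-ratio of lines of the pencil into the cross-ratio of points of $\cp^1$. I would also record here that $L\neq T$ forces $\ell\neq t$, since two distinct lines through the common finite point $A$ meet $\inftyline$ at distinct points.

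Next I would translate the symmetry relation through this identification. By definition $\ell'$ is symmetric to $\ell$ with respect to $(L,T)$ precisely when $(\ell,t;z,z')=-1$, together with the degenerate cases $L\mapsto L$ and $T\mapsto T$. Substituting $z_1=\ell$, $z_2=t$, $z_3=z$, $z_4=z'$ into \eqref{eq:harmonic_formula} immediately yields $z'=\sigma(z)$, where $\sigma$ is the fractional-linear map \eqref{eq:form_symmetry}; the two degenerate cases are then seen to be precisely the fixed points of $\sigma$. Thus the symmetry, read in azimuth coordinates, \emph{is} the map \eqref{eq:form_symmetry}, and it remains only to verify its asserted properties and uniqueness.

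For the properties I would represent $\sigma$ by the matrix $M=\left(\begin{smallmatrix}\ell+t & -2\ell t\\ 2 & -(\ell+t)\end{smallmatrix}\right)$, whose determinant is $-(\ell-t)^2\neq0$; hence $\sigma$ is a genuine holomorphic (complex conformal) automorphism of $A^{\ast}\cong\cp^1$. Since $\tr M=0$, Cayley--Hamilton gives $M^2=(\ell-t)^2 I$, a nonzero scalar, so $\sigma$ is an involution. Solving $\sigma(z)=z$ reduces to $(z-\ell)(z-t)=0$, so its fixed points are exactly $\az(L)$ and $\az(T)$, and $\sigma$ is nontrivial because $\ell\neq t$ keeps it distinct from the identity. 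Finally, for uniqueness I would conjugate the two fixed points to $0$ and $\infty$: any holomorphic automorphism of $\cp^1$ fixing $0$ and $\infty$ has the form $w\mapsto\lambda w$, and the involution condition $\lambda^2=1$ leaves only $\lambda=1$ (the identity) and $\lambda=-1$, so the nontrivial one is unique; conjugating back shows $\sigma$ is the only nontrivial complex conformal involution of $A^{\ast}$ fixing $L$ and $T$. The only step requiring genuine insight is the first one—recognizing that $\inftyline$ serves as the transversal so that \eqref{eq:id_symmetry} converts line cross-ratios into azimuth cross-ratios; everything afterward is routine linear algebra over $\cp^1$.
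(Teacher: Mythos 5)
Your proof is correct, and it follows exactly the route the paper implicitly intends: the paper states Proposition~\ref{prop:symmetry} without proof, as an immediate consequence of Formula~\eqref{eq:harmonic_formula} once one uses $\inftyline$ as the transversal in Definition~\ref{def:cross_ratio}, which is precisely your key step. Your additional verifications (the matrix $M$ with $\tr M=0$ and $\det M=-(\ell-t)^2\neq0$ giving the involution property, the fixed-point computation, and uniqueness via conjugation of the fixed points to $0$ and $\infty$) correctly fill in the routine details the paper leaves to the reader.
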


\begin{remark}
\label{rem:degenerate_reflection}
Notice that when $\ell=t$, the map of Equation \eqref{eq:form_symmetry} is constant to $\ell$. This shows that when both lines $L$ and $T$ are the same, the reflection law degenerates and we can say that any two lines $(d,d')$ are symmetric with respect to $(L,L)$ if and only if $d=L$ or $d'=L$.
\end{remark}

\section{Proof of Theorem \ref{thm:main_theorem_analytic} when $a$, $b$, $c$ are supported by lines}
\label{sec:germ_line}

Let $(\alpha,\beta,\gamma)$ be a 3-reflective local projective billiard. Let $a$, $b$, $c$ denote the classical boundaries of the curves $\alpha$, $\beta$, $\gamma$.

\vspace{0.2cm}
In this section we prove Theorem \ref{thm:main_theorem_analytic} when $a$, $b$, $c$ are supported by lines. Let us first define right-spherical billiards properly with the tools we introduced on projective billiards. Take a line $\ell\subset\cp^2$ and a point $P\notin \ell$. One can define a line-framed curve $\omega(\ell,P)\subset\bref$ (which is in fact algebraic) as the set of $(A,L)$ where $A$ is on $\ell$ and $L$ is the line $AP$ :
$$\omega(\ell,P)=\{(A,AP)\in\bref\sep A\in\ell\}.$$

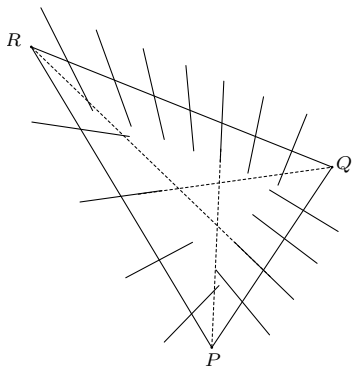
\begin{figure}[h]
\centering
\begin{tikzpicture}[line cap=round,line join=round,>=triangle 45,x=0.8cm,y=0.8cm]
\clip(-3.4,-2.5) rectangle (2.4,4);
\draw (2,1)-- (0,-2);
\draw (0,-2)-- (-3,3);
\draw (-3,3)-- (2,1);
\draw (-2.84,3.65)-- (-1.98,1.94);
\draw (-1.92,3.28)-- (-1.34,1.68);
\draw (-1.14,2.97)-- (-0.79,1.46);
\draw (-0.43,2.69)-- (-0.3,1.27);
\draw (0.2,2.43)-- (0.14,1.09);
\draw (0.86,2.17)-- (0.6,0.9);
\draw (1.58,1.88)-- (1.1,0.7);
\draw (-2.99,1.75)-- (-1.37,1.51);
\draw (-2.19,0.42)-- (-0.83,0.61);
\draw (-1.43,-0.84)-- (-0.32,-0.25);
\draw (-0.79,-1.91)-- (0.12,-0.97);
\draw (0.07,-0.71)-- (0.96,-1.79);
\draw (1.36,-1.2)-- (0.38,-0.25);
\draw (0.68,0.21)-- (1.75,-0.6);
\draw (0.96,0.62)-- (2.1,-0.07);
\draw [line width=0.4pt,dash pattern=on 1pt off 1pt] (0.97,-0.82)-- (-3,3);
\draw [line width=0.4pt,dash pattern=on 1pt off 1pt] (-1.22,0.55)-- (2,1);
\draw [line width=0.4pt,dash pattern=on 1pt off 1pt] (0.15,1.3)-- (0,-2);
\begin{scriptsize}
\fill [color=black] (2,1) circle (0.5pt);
\draw[color=black] (2.2,1.04) node {$Q$};
\fill [color=black] (0,-2) circle (0.5pt);
\draw[color=black] (0.02,-2.2) node {$P$};
\fill [color=black] (-3,3) circle (0.5pt);
\draw[color=black] (-3.3,3.12) node {$R$};
\end{scriptsize}
\end{tikzpicture}
\caption{The right-spherical billiard $\mathcal{B}(P,Q,R)$}
\label{fig:spherical}
\end{figure}

\begin{definition}
\label{def:spherical_billiard}
We call any projective billiard $\mathcal{B}$ \textit{right-spherical} if there are three points $P$, $Q$, $R$ not on the same line, such that
$$\mathcal{B}=\left(\omega(PQ,R),\omega(QR,P),\omega(RP,Q)\right),$$
see Figure \ref{fig:spherical}. We can write $\mathcal{B} = \mathcal{B}(P,Q,R)$.
\end{definition}

\begin{remark}
As explained in the introduction, the name right-spherical billiard comes from the construction which let us understand billiards on $\s^2$ as projective billiards. The construction is the following : consider the projection 
$$\pi:\s^2\subset\real^3\to\rp^2.$$
It sends the geodesics of $\s^2$ (which are its great circles) onto the lines of $\rp^2$. If $b$ is a smooth curve in $\s^2$ and $M\in b$, the transverse line at $\pi(M)\in\pi(b)$ is given by the projection of the orthogonal geodesics to $T_Mb$. Note then that when you project and then complexify the $3$-reflective billiards on $\s^2$ given in \cite{VKNZ}, you get right-spherical billiards.
\end{remark}

\begin{proposition}
\label{prop:reflectiv_spherical}
Any right-spherical billiard is $3$-reflective.
\end{proposition}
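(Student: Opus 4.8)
The plan is to work in projective coordinates adapted to the three vertices and to show, by a direct cross-ratio computation, that the reflection conditions at all three vertices of an inscribed triangle collapse to one and the same algebraic relation. Concretely, I would choose homogeneous coordinates on $\cp^2$ so that $P=[1:0:0]$, $Q=[0:1:0]$, $R=[0:0:1]$; then the three classical boundaries are the coordinate lines $PQ=\{z=0\}$, $QR=\{x=0\}$, $RP=\{y=0\}$, and I parametrize the candidate vertices as $A_1=[a:b:0]\in PQ$, $A_2=[0:c:d]\in QR$, $A_3=[e:0:f]\in RP$. By Definition \ref{def:spherical_billiard} the prescribed line fields are $L(A_1)=A_1R=\{bx-ay=0\}$, $L(A_2)=A_2P=\{dy-cz=0\}$, $L(A_3)=A_3Q=\{fx-ez=0\}$, while the tangent line to each boundary at a point is the boundary line itself.

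For the reflection at $A_1$ I would parametrize the pencil of lines through $A_1$ by writing each line as $\alpha\,(bx-ay)+\beta\,z=0$, so that $L(A_1)$ and $T_{A_1}a=PQ$ correspond to $[\alpha:\beta]=[1:0]$ and $[0:1]$; substituting $A_2$ and $A_3$ expresses the edges $A_1A_2$, $A_1A_3$ as explicit points of this $\cp^1$. In the affine coordinate $t=\beta/\alpha$ the four lines sit at $t=0,\infty,\ ac/d,\ -be/f$, and Definition \ref{def:cross_ratio} gives $(L(A_1),T_{A_1}a;A_1A_2,A_1A_3)=-\tfrac{acf}{bde}$, so harmonicity at $A_1$ is exactly $acf=bde$. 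Repeating the computation verbatim at $A_2$ (pencil $\alpha\,(dy-cz)+\beta\,x=0$) and at $A_3$ (pencil $\alpha\,(fx-ez)+\beta\,y=0$) yields the cross-ratio $-\tfrac{bde}{acf}$ in both cases, so the reflection conditions at all three vertices reduce to the single relation $acf=bde$. Here harmonicity is insensitive to the order of the last two lines, since $-1$ is its own reciprocal, so it is irrelevant which edge is ``incoming'' (see Remark \ref{remark:symmetric_cross_ratio}).

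Given this coincidence, the set of triangular orbits is $\{acf=bde\}$, a smooth hypersurface in the three-parameter space of inscribed triangles ($\cp^1\times\cp^1\times\cp^1$, or $\rp^1\times\rp^1\times\rp^1$ in the real case), hence a two-dimensional family. To match the definition of $3$-reflectivity I would note that fixing $A_1=[a:b:0]$ and $A_2=[0:c:d]$ the relation determines the third vertex uniquely as $A_3=[ac:0:bd]$, so every pair $(A_1',A_2')$ near a generic base triangle extends to a genuine $3$-periodic orbit. The only delicate points—and where the remaining effort goes—are the nondegeneracy hypotheses built into the orbit definition: on an open set of parameters the four lines at each vertex are pairwise distinct (so the harmonic quadruple is honestly defined), the edges are not tangent to the boundary lines (i.e.\ $A_2,A_3\notin PQ$, etc.), and $A_3$ avoids $P,Q,R$. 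All of these are nonvanishing conditions cutting out a dense open subset, which suffices to exhibit the required open family and conclude $3$-reflectivity. Since the computation is purely algebraic over the base field, the real (and real-analytic) statement follows by restriction to $\rp^2$.
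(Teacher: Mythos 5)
Your proof is correct, and it takes a genuinely different route from the one in the paper. The paper's argument (due to Simon Allais) is synthetic: it never introduces coordinates, but uses instead the fact that a harmonic pencil of concurrent lines cuts every transversal line in a harmonic quadruple of points. Concretely, given $A\in PQ$ and $B\in QR$, it defines $C$ by the reflection law at $B$ and $C'$ by the reflection law at $A$, proves $C=C'$ by projecting both harmonic pencils onto the line $RP$ and invoking uniqueness of the fourth harmonic point, and then obtains the reflection law at $C$ for free by projecting the pencil at $C$ onto $PQ$ and reusing the harmonicity at $B$. Your computation in homogeneous coordinates replaces these two synthetic steps by the observation that the reflection conditions at all three vertices are literally one and the same equation $acf=bde$; this costs a bit of cross-ratio bookkeeping (and note that the value you get at $A_3$ is $-acf/(bde)$ rather than $-bde/(acf)$ with your edge ordering, which is harmless since, as you say, harmonicity is invariant under swapping the last two entries), but it buys an explicit global description of the orbit set as a hypersurface in the triple product of the three boundary lines, an explicit formula for the billiard map $(A_1,A_2)\mapsto A_3=[ac:0:bd]$, and a statement that is manifestly uniform over $\real$ and $\cmplx$. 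Both proofs share the same logical skeleton---compatibility of the three reflection conditions, so that a generic pair of vertices determines a closed orbit---and your treatment of the nondegeneracy requirements (distinctness of the four lines in each pencil, non-tangency of the edges, openness of these conditions) is exactly what the definition of $3$-reflectivity demands.
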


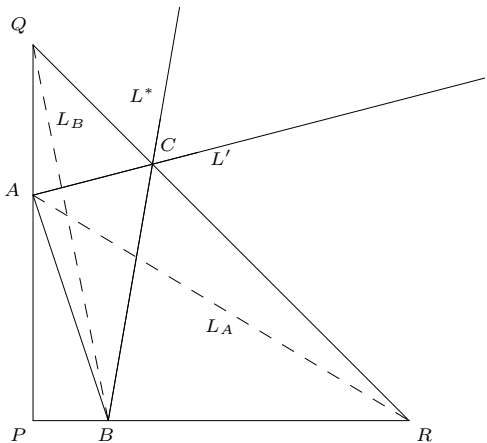
\begin{figure}[h]
\centering
\definecolor{qqqqff}{rgb}{0,0,0}
\definecolor{rvwvcq}{rgb}{0,0,0}
\definecolor{wrwrwr}{rgb}{0,0,0}
\begin{tikzpicture}[line cap=round,line join=round,>=triangle 45,x=5cm,y=5cm]
\clip(-0.5,-0.1) rectangle (1.2,1.1);
\draw [line width=0.3pt] (0,0)-- (0,1);
\draw [line width=0.3pt] (0,0)-- (1,0);
\draw [line width=0.3pt] (1,0)-- (0,1);
\draw [line width=0.3pt,dash pattern=on 5pt off 5pt,color=qqqqff] (0,0.6)-- (1,0);
\draw [line width=0.3pt,dash pattern=on 5pt off 5pt,color=qqqqff] (0.2,0)-- (0,1);
\draw [line width=0.3pt] (0,0.6)-- (0.2,0);
\draw [line width=0.3pt,domain=0.2:2.51110225791798] plot(\x,{(-0.1364762157619824--0.6823810788099121*\x)/0.11761892119008793});
\draw [line width=0.3pt,domain=0:2.51110225791798] plot(\x,{(--0.19057135271405276--0.08238107880991208*\x)/0.31761892119008794});
\draw [line width=0.3pt] (0.2,0)-- (0.3385248449141526,0.8036694450013475);
\draw [line width=0.3pt] (0,0.6)-- (0.4392656677239148,0.7139326947389109);
\begin{scriptsize}
\draw[color=black] (-0.03942603816603807,-0.03744588010741597) node {$P$};
\draw[color=black] (-0.037416165592291786,1.0519050548630626) node {$Q$};
\draw[color=black] (1.0398755339357098,-0.03744588010741597) node {$R$};
\draw[color=black] (-0.05550501875600823,0.6137528337863756) node {$A$};
\draw[color=qqqqff] (0.49720993902421634,0.25) node {$L_A$};
\draw[color=black] (0.1937191803885294,-0.03744588010741597) node {$B$};
\draw[color=qqqqff] (0.1,0.8) node {$L_B$};
\draw[color=black] (0.36,0.7343451882111518) node {$C$};
\draw[color=black] (0.29,0.8669967780784057) node {$L\etoile$};
\draw[color=black] (0.5,0.7) node {$L'$};
\end{scriptsize}
\end{tikzpicture}
\caption{The line $AB$ is reflected into the lines $L'$ and $L\etoile$ in $A$ and $B$ respectively. Their intersection point is $C$, which in fact lies on $QR$ by a simple computation.}
\label{fig4}
\end{figure}

\begin{proof}
This proof was found by Simon Allais in a talk we had about harmonicity in a projective space. Let $\mathcal{B} = \mathcal{B}(P,Q,R)$ be a right-spherical billiard. Let $A\in PQ$ and $B\in QR$ such that $A\neq B$.

Let $C\in RP$ be such that $AB$, $BC$, $QR$, $BP$ are harmonic lines. Define $C'\in RP$ similarly: $AB$, $AC''$, $PQ$, $AR$ are harmonic lines.

Let us first show that necessarily $C=C'$.
Consider the line $RP$ and let $K$ be its point of intersection with $AB$. Let us consider harmonic quadruples of points on $RP$. By harmonicity of the previous defined lines passing through $B$, the quadruple of points $(K,C,R,P)$ is harmonic. Doing the same with the lines passing through $A$, the quadruple of points $(K,C',R,P)$ is harmonic. Hence $C=C'$ since the projective transformation defining the cross-ratio is one to one.

Now let us prove that the lines $BC$, $AC$, $PR$, $CQ$ through $C$ are harmonic lines. Consider the line $PQ$: $BC$ intersects it at a certain point denoted by $T$, $AC$ at $A$, $PR$ at $P$ and $CQ$ at $Q$. But the quadruple of points $(T,A,P,PQ)$ is harmonic since there is a reflection law at $B$ whose lines intersect $PQ$ exactly in those points.
\end{proof}

\begin{proposition}
\label{prop:reflectivity_and_lines1}
Suppose that $\mathcal{B}=(\alpha,\beta,\gamma)$ is a $3$-reflective complex-analytic local projective billiard such that its classical borders $a$, $b$, $c$ are supported by pairwise distinct lines. Then $\mathcal{B}$ is right-spherical.
\end{proposition}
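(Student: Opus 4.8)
The plan is to show that the transversal line field on each of the three lines is the pencil of lines through the intersection point of the other two; by Definition~\ref{def:spherical_billiard} this is exactly the assertion that $\mathcal B$ is right-spherical. Write $L_a,L_b,L_c$ for the three distinct lines supporting $a,b,c$, and set $P=L_c\cap L_a$, $Q=L_a\cap L_b$, $R=L_b\cap L_c$. I first treat the generic case in which these three points are distinct (equivalently, the lines are not concurrent); then $L_a=PQ$, $L_b=QR$, $L_c=RP$, and the claim to prove is that $L(A)=AR$ for $A\in L_a$, $L(B)=BP$ for $B\in L_b$, and $L(C)=CQ$ for $C\in L_c$. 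Note that the transversality condition in the definition of a line-framed curve guarantees $L(A)\neq L_a=T_Aa$, so every reflection below is non-degenerate.

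The heart of the argument is a computation on the line $L_c$. Consider a triangular orbit $A\to B\to C$ of the family, with $A\in L_a$, $B\in L_b$, $C\in L_c$. The reflection law at $A$ states that $(L(A),L_a,AB,AC)$ is a harmonic quadruple; intersecting this pencil with $L_c$ and using that cross-ratios are preserved (Definition~\ref{def:cross_ratio} and Remark~\ref{remark:symmetric_cross_ratio}) yields a harmonic relation among the four points $d_A:=L(A)\cap L_c$, $P=L_a\cap L_c$, $K:=AB\cap L_c$ and $C$. Likewise the reflection at $B$ gives a harmonic relation among $e_B:=L(B)\cap L_c$, $R=L_b\cap L_c$, $K$ and $C$. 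I would then choose a coordinate on $L_c$ sending $P\mapsto\infty$ and $R\mapsto 0$, and projective coordinates $\xi$ on $L_a$, $\eta$ on $L_b$ so that the perspectivity $A,B\mapsto AB\cap L_c$ takes the product form $K=\xi\eta$ (an explicit affine computation gives exactly this). In these coordinates the two harmonic relations read
\[ k+c=2d_A, \qquad \tfrac1k+\tfrac1c=\tfrac{2}{e_B}, \]
where $k,c$ are the coordinates of $K,C$; the left-hand sides are the elementary symmetric functions of the pair $\{k,c\}$, and crucially $d_A$ depends only on $\xi$ while $e_B$ depends only on $\eta$. Eliminating $c$ (the requirement that the reflections at $A$ and $B$ produce the same third vertex) gives a single closing relation, which after substituting $k=\xi\eta$ becomes a functional equation of the shape $s(\xi)\big(m(\eta)\xi\eta-1\big)=m(\eta)\xi^2\eta^2$, with $s=2d_A$ and $m=2/e_B$. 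Three-reflectivity forces this to hold for $(\xi,\eta)$ in an open set; comparing it for two values of $\eta$ and matching as rational functions of $\xi$ forces $m\equiv 0$, and then $s\equiv 0$. Thus $d_A\equiv R$ and $e_B\equiv P$, i.e. $L(A)=AR$ and $L(B)=BP$.

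Running the same computation with the three lines permuted (projecting the orbit onto $L_a$ and reading the reflections at $B$ and $C$) yields in the same way $L(C)=CQ$; hence $\mathcal B=\mathcal B(P,Q,R)$ is right-spherical. It then remains to exclude the concurrent case $P=Q=R$. There the reflections at $A$ and $B$ collapse to the same harmonic condition: both $d_A$ and $e_B$ equal the harmonic conjugate of the common point $P=R$ with respect to $\{K,C\}$, hence coincide, forcing $d_A\equiv e_B\equiv\kappa$ to be a fixed point of $L_c$ and $L(A),L(B)$ to be pencils through $\kappa$. I would then bring in the so-far-unused reflection at the third vertex: for a fixed $C$ the orbits through it form a one-parameter family whose sides $AB$ all pass through a fixed point of $L_c$, and the induced self-map of the pencil at $C$ sending $CA$ to $CB$ is a composition of perspectivities that is not an involution; hence no single line $L(C)$ can make $(L(C),L_c,CA,CB)$ harmonic along the whole family, contradicting $3$-reflectivity. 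This rules out concurrency and finishes the proof.

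The main obstacle is precisely the concurrent case: unlike the generic case, the reflections at two vertices impose no contradiction there, and one must exploit the third reflection law together with the one-parameter family of orbits through a fixed vertex to show that the required line field cannot exist. A secondary care point is the degeneracy $e_B=\infty$ (the pencil through $P$), which is why I work throughout with the symmetric functions $k+c$ and $\tfrac1k+\tfrac1c$ rather than with $d_A,e_B$ directly, so that the right-spherical solution $s\equiv m\equiv 0$ appears as a genuine, non-singular zero of the functional equation.
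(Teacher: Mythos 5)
Your proposal is correct, and it takes a genuinely different route from the paper's proof. In the non-concurrent case the paper argues by contradiction: assuming $L(A)\neq AR$, it extends $\beta$ and $\gamma$ analytically along the whole lines $\ell_b,\ell_c$ and then exploits the degenerate orbit whose middle vertex is $B=R=\ell_b\cap\ell_c$ (so that $BC=\ell_c$); the reflection law at this single point forces $(\ell_b,L_B;AR,\ell_c)$ to be harmonic for every $A$ at once, which is absurd because $AR$ moves with $A$. You instead never leave the open set of genuine orbits: you normalize so that $(A,B)\mapsto AB\cap L_c$ becomes $K=\xi\eta$, eliminate $C$ between the two projected reflection laws, and solve the functional equation $s(\xi)\,\bigl(m(\eta)\xi\eta-1\bigr)=m(\eta)\xi^2\eta^2$, whose only solution is $s\equiv m\equiv 0$, i.e.\ exactly the right-spherical field. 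I checked the product normal form, both harmonic identities, and the elimination (comparing the identity at two values of $\eta$ forces $m(\eta_1)m(\eta_2)\eta_1\eta_2(\eta_1-\eta_2)=0$, hence $m\equiv 0$ and then $s\equiv 0$); your remark that $s=2d_A$ and $m=2/e_B$ are finite-valued is also right, since $d_A=P$ or $e_B=R$ would make the line field tangent to its line. What this buys: a constructive argument using only the reflections at two vertices, with no analytic continuation of the germs and no limiting orbit, at the cost of a coordinate computation. In the concurrent case both proofs start identically (the laws at $A$ and $B$ project to one and the same harmonic quadruple on $\ell_c$, so both fields are pencils through a fixed $\kappa\in\ell_c$), but the endgames differ: the paper never needs the third reflection, because the pencil point of the field on $b$ equals $L_A\cap\ell_c$ and is independent of $A$, so $L_{A'}\cap\ell_c$ and $L_{A'}\cap\ell_b$ are fixed points, $L_{A'}$ is a fixed line, and $A'$ cannot move, a contradiction; you instead invoke the reflection at $C$. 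Your one unproved claim there, that the induced map $CA\mapsto CB$ of the pencil at $C$ is not an involution, is true and elementary: taking $L_a=\{y=\lambda x\}$, $L_b=\{y=\mu x\}$, $L_c=\{y=0\}$, $C=(c,0)$ and $K=(k,0)$ the fixed point determined by $C$, the map reads $m\mapsto um/(vm+u)$ with $u=\lambda\mu k$ and $v=(k-c)(\lambda-\mu)$, which is an involution only when $k=c$, i.e.\ $C=\kappa$; so the third law fails along the one-parameter family through every other $C$. Both endgames are valid; the paper's is a bit more economical, while yours makes explicit where the third reflection law is actually needed.
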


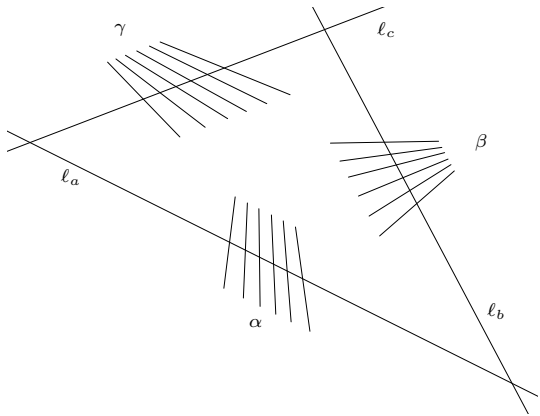
\begin{figure}[h]
\centering
\definecolor{rvwvcq}{rgb}{0,0,0}
\definecolor{qqqqff}{rgb}{0,0,0}
\begin{tikzpicture}[line cap=round,line join=round,>=triangle 45,x=3cm,y=3cm]
\clip(-1.1,-0.8) rectangle (1.25,1);
\draw [line width=0.3pt,domain=-1.654113568915245:3.3932987001391792] plot(\x,{(-0.16-0.8*\x)/1.6});
\draw [line width=0.3pt,domain=-1.654113568915245:3.3932987001391792] plot(\x,{(-1.2277789527356262--1.5691014399127163*\x)/-0.8310198934037676});
\draw [line width=0.3pt,domain=-1.654113568915245:3.3932987001391792] plot(\x,{(--1.0230587210441373--0.5009276323773554*\x)/1.3053277216669545});
\draw [line width=0.3pt,color=qqqqff] (-0.14093264369608133,-0.24637842805425736)-- (-0.090747654511657,0.15896186920455588);
\draw [line width=0.3pt,color=qqqqff] (-0.05446004694753462,-0.2896147264285307)-- (-0.036702281543815274,0.131939182720635);
\draw [line width=0.3pt,color=qqqqff] (0.019659321694076873,-0.32667441074933645)-- (0.014254784397292608,0.10646064975008107);
\draw [line width=0.3pt,color=qqqqff] (0.08914622979558781,-0.3614178648000919)-- (0.06984431087850126,0.07866588650947676);
\draw [line width=0.3pt,color=qqqqff] (0.15708898438373148,-0.39538924209416376)-- (0.12234553033297602,0.052415276782239374);
\draw [line width=0.3pt,color=qqqqff] (0.24047327410554437,-0.4370813869550702)-- (0.17639090330081772,0.025392590298318522);
\draw [line width=0.3pt,color=qqqqff] (0.04921651272513028,0.572389630901921)-- (-0.4706893691568044,0.8277886768078134);
\draw [line width=0.3pt,color=qqqqff] (-0.041087770469984575,0.5377348005297482)-- (-0.5268361892772861,0.8062419853512397);
\draw [line width=0.3pt,color=qqqqff] (-0.5769864660553699,0.786996504701037)-- (-0.12415336436607993,0.5058578622009946);
\draw [line width=0.3pt,color=qqqqff] (-0.6192281026848497,0.7707860138999011)-- (-0.22377749125112056,0.4676264838430335);
\draw [line width=0.3pt,color=qqqqff] (-0.6562124666437625,0.7565930341865296)-- (-0.33588611505757576,0.4246041021374378);
\draw [line width=0.3pt,color=qqqqff] (-0.4233561297280056,0.8459531018531494)-- (0.15241244056143988,0.6119917100916681);
\draw [line width=0.3pt,color=qqqqff] (0.33130561602396125,0.39734460354641676)-- (0.8106343513587597,0.4056321135828327);
\draw [line width=0.3pt,color=qqqqff] (0.3735588899635165,0.3175635134596003)-- (0.8250628437474745,0.37838876073098693);
\draw [line width=0.3pt,color=qqqqff] (0.837327115365644,0.35523181056854314)-- (0.4111925350039675,0.24650503809594904);
\draw [line width=0.3pt,color=qqqqff] (0.8525318240728224,0.326522834387277)-- (0.45562999257754133,0.1625998521693388);
\draw [line width=0.3pt,color=qqqqff] (0.8655523491459822,0.30193795404580465)-- (0.5026297605126291,0.07385660531342031);
\draw [line width=0.3pt,color=qqqqff] (0.8803366427601895,0.27402279030269827)-- (0.5492936900856976,-0.014252523801629502);
\begin{scriptsize}
\draw[color=black] (-0.8180181217670579,0.24605511831000076) node {$\ell_a$};
\draw[color=black] (1.0666232550013968,-0.34675026020080313) node {$\ell_b$};
\draw[color=black] (0.5800431177266321,0.9) node {$\ell_c$};

\draw[color=black] (0,-0.4) node {$\alpha$};
\draw[color=black] (1,0.4) node {$\beta$};
\draw[color=black] (-0.6,0.9) node {$\gamma$};
\end{scriptsize}
\end{tikzpicture}
\caption{The local projective billiard $\mathcal{B}=(\alpha,\beta,\gamma)$ is such that $a$, $b$, $c$ are supported by the lines $\ell_a$, $\ell_b$, $\ell_c$ respectively.}
\label{fig6}
\end{figure}

\begin{proof}
Write $\ell_a$, $\ell_b$, $\ell_c$ the lines which respectively support $a$, $b$, $c$. First note that two lines among $\ell_a$, $\ell_b$, $\ell_c$ cannot be the same (otherwise $3$-periodic orbits cannot exist). There are two cases to consider:

Case 1. The three lines $\ell_a$, $\ell_b$, $\ell_c$ intersect at the same point $R$.

Case 2. The three lines $\ell_a$, $\ell_b$, $\ell_c$ do not intersect at the same point.

\noindent \textbf{Case 1.} We suppose that the three lines $\ell_a$, $\ell_b$, $\ell_c$ intersect at the same point $R$ and we show a contradiction. Choose any $m_A=(A,L_A)$ on $\alpha$ such that $L_A\neq \ell_a$ and $A\neq R$. The line $L_A$ intersects $\ell_c$ at a certain point $R_C\neq R$ and $\ell_b$ at a certain point $R_B\neq R$. Fix $m_B=(B,L_B)$ in $\beta$ and consider $m_C=(C,L_C)$ in $\gamma$ such that $(m_A,m_B,m_C)$ is a triangular orbit. Since the quadruple of lines through $A$, $(\ell_a,L_A;AC,AB)$, is harmonic, so is the quadruple of lines through $B$, $(\ell_b,BR_C;BC,AB)$ (since the intersection points of these lines with $\ell_c$ are the same in each quadruple). We deduce that $L_B=BR_C$. The same arguments can be applied for $m_C$ to show that $L_C=CR_B$. Thus the curves $\beta$ and $\gamma$ coincide with the curves $\{(B',B'R_C) | B'\in \ell_b\}$ and $\{(C',C'R_B) | C'\in\ell_c\}$ near their points $(B,L_B)$ and $(C,L_C)$ respectively. Similarly for $m_A'=(A',L_A')$ in a neighborhood of $m_A$, the line $L_A'$ should also intersect $\ell_b$ at $R_B$ and $\ell_c$ at $R_C$ (by the same arguments). Hence $L_A'=R_BR_C$ and $A=A'$ which is impossible, since $A'\in a$ was arbitrary.

\noindent \textbf{Case 2.} Suppose that the three lines $\ell_a$, $\ell_b$, $\ell_c$ do not intersect at the same point. Choose any $m_A=(A,L_A)$ in $\alpha$ such that $A$ belongs neither to $\ell_b$, nor to $\ell_c$, and denote by $R$ the intersection point of $\ell_b$ with $\ell_c$. Let us show that $L_A=AR$. 

Suppose the contrary: $L_{A}\neq AR$. First we extend the curves $\beta$ and $\gamma$ analytically. For any $B\in\ell_b$, the line $AB$ is reflected at point $m_A$ into a certain line intersecting $\ell_c$ at a point $C(B)$. In particular, when $B=R$, $C(B)\neq B$ otherwise $L_A$ would be the line $AR$. Hence $B\neq C(B)$ for any $B\in\ell_b$ and the map $B\mapsto BC(B)$ is well-defined and analytic on $\ell_b$ (by Equation \eqref{eq:form_symmetry}). Therefore there is an analytic field of lines $L_B:\ell_b\to\cp^{2\ast}$ such that for any $B\in\ell_b$, $(\ell_b,L_B(B);AB,BC(B))$ is a harmonic quadruple of lines through $B$ (see Equation \eqref{eq:form_symmetry}), and $\beta$ extends to the curve $\{(B,L_B(B))\sep B\in\ell_b\}$. We can apply the same arguments to $\gamma$ which extends to a curve $\{(C,L_C(C)\sep C\in\ell_c\}$ where $L_C:\ell_c\to\cp^{2\ast}$ is an analytic field of lines. Notice that the constructed extensions do not depend on the choice of $m_A$ since they are analytic and they contain the initial line-framed curves $\beta$ and $\gamma$ respectively.

Now take $m_B=(B,L_B)\in\beta$ such that $B=R$ and consider $C=C(B)$ and then $m_{C}=(C,L_{C}(C))$ in $\gamma$. As we noticed previously, $C\neq R$ and $BC=\ell_c$. Note that $(m_A,m_B,m_C)$ is a triangular orbit by analyticity of the projective reflection law at $B$ and $C$ (by Equation \ref{eq:form_symmetry}). Therefore, the quadruple of lines through $B$, $(\ell_b,L_B;AR,\ell_c)$, is harmonic, and this is true for any $m_A$ in $\alpha$ which is impossible since the lines $AR$ cannot be the same (this follows from the fact that $\ell_a$ doesn't go through $R$).
\end{proof}

\section{Study of Birkhoff's distribution}
\label{sec:birkhoff_distribution}

\subsection{Singular analytic distributions}

We recall some definitions and properties of singular analytic distributions, which can be found in \cite{glut}.

\begin{definition}[\cite{glut}, Lemma $2.27$]
Let $W$ be a complex manifold, $\Sigma\subset W$ a nowhere dense closed subset, $k\in\{0,\ldots,n\}$ and $\mathcal{D}$ an analytic field of $k$-dimensional planes defined on $W\setminus\Sigma$. We say that $\mathcal{D}$ is a \textit{singular analytic distribution of dimension $k$ and singular set $\sing(\mathcal{D})=\Sigma$} if $\mathcal{D}$ extends analytically to no points in $\Sigma$ and if for all $x\in W$, one can find holomorphic $1$-forms $\alpha_1$,..., $\alpha_p$ defined on a neighborhood $U$ of $x$ and such that for all $y\in U\setminus\Sigma$,
$$\mathcal{D}(y) = \bigcap_{i=1}^p \ker\alpha_i(y).$$
\end{definition}

\noindent Singular analytic distribution can be restricted on analytic subsets:

\begin{proposition}[\cite{glut}, Definition $2.32$]
Let $W$ be a complex manifold, $M$ an irreducible analytic subset of $W$ and $\mathcal{D}$ a singular analytic distibution on $W$ with $M\nsubseteq\sing(\mathcal{D})$. Then there exists an open dense subset $M^o\regind$ of $x\in M\regind$ for which
$$\mathcal{D}_{|M}(x) := \mathcal{D}(x)\cap T_xM$$
has minimal dimension. We say that $\mathcal{D}_{|M}$ is a singular analytic distribution on $M$ of singular set $\sing(\mathcal{D}):=M\setminus M^o\regind$.
\end{proposition}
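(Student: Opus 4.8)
The plan is to carry out the construction on the connected complex manifold $U:=M\regind\setminus\sing(\mathcal{D})$ and then propagate the result to $M$ by density. Since $M$ is irreducible, its regular locus $M\regind$ is a connected complex manifold whose complement $\sing M=M\setminus M\regind$ is a proper, nowhere dense analytic subset. Moreover $\sing(\mathcal{D})$ is itself analytic: near each point it is the locus where the defining $1$-forms $\alpha_1,\dots,\alpha_p$ drop rank, which is exactly where $\bigcap_i\ker\alpha_i$ has dimension $>k$ (at any other point of $\sing(\mathcal D)$ the field would extend as a $k$-plane field, contradicting the definition). As $M\nsubseteq\sing(\mathcal{D})$, the trace $M\cap\sing(\mathcal{D})$ is thus a proper analytic subset of the irreducible set $M$, hence nowhere dense. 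Removing the two nowhere dense analytic subsets $\sing M$ and $M\cap\sing(\mathcal{D})$ from the connected manifold $M\regind$ leaves a connected dense open set $U$, on which both the $k$-plane $\mathcal{D}(x)\subset T_xW$ and the tangent space $T_xM\subset T_xW$ are defined and vary analytically.

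On $U$ consider the integer-valued function $\delta(x):=\dim\bigl(\mathcal{D}(x)\cap T_xM\bigr)$. First I would show that $\delta$ is upper semicontinuous and, more precisely, that each superlevel set $\{x\in U:\delta(x)\geq d\}$ is analytic. Choosing local holomorphic frames for $\mathcal{D}$ and for $TM$, the sum $\mathcal{D}(x)+T_xM$ is the image of an analytically varying matrix, so $\dim\bigl(\mathcal{D}(x)+T_xM\bigr)$ equals its rank and is lower semicontinuous. Since $\dim\mathcal{D}(x)=k$ and $\dim T_xM=\dim M$ are constant, the identity $\delta(x)=\dim\mathcal{D}(x)+\dim T_xM-\dim\bigl(\mathcal{D}(x)+T_xM\bigr)$ shows that $\delta$ is upper semicontinuous and that $\{\delta\geq d\}$ is cut out by the vanishing of the appropriate minors, hence analytic. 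Let $d_0:=\min_{x\in U}\delta(x)$, attained because $\delta$ takes only finitely many values.

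Next I would set $M^o\regind:=\{x\in U:\delta(x)=d_0\}$. Its complement in $U$ is the analytic set $\{\delta>d_0\}$, which is proper precisely because $d_0$ is attained; since $U$ is connected, a proper analytic subset is nowhere dense, so $M^o\regind$ is open and dense in $U$, and therefore open and dense in $M\regind$ and in $M$. On $M^o\regind$ the intersection dimension $\delta\equiv d_0$ is constant, so the planes $\mathcal{D}_{|M}(x)=\mathcal{D}(x)\cap T_xM$ form an analytic field of $d_0$-planes. To exhibit it locally as a common kernel of $1$-forms, I would take near a point of $M^o\regind$ the holomorphic $1$-forms $\alpha_1,\dots,\alpha_p$ on $W$ with $\mathcal{D}=\bigcap_i\ker\alpha_i$ and pull them back by the inclusion $M\regind\hookrightarrow W$: the restrictions $\alpha_i|_{M}$ are holomorphic on $M\regind$ and satisfy $\bigcap_i\ker(\alpha_i|_{M})(x)=\mathcal{D}(x)\cap T_xM=\mathcal{D}_{|M}(x)$ for all $x\in U$. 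This exhibits $\mathcal{D}_{|M}$ as a singular analytic distribution on $M\regind$ with singular set $M\regind\setminus M^o\regind$; adjoining $\sing M$ gives the singular set $M\setminus M^o\regind$ claimed in the statement.

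The main obstacle is the passage carried out in the second and third steps: verifying that the jump loci of $\delta$ are not merely closed but genuinely analytic, so that irreducibility of $M$, through the connectedness of $U$, forces $\{\delta>d_0\}$ to be nowhere dense rather than possibly large. This analyticity is exactly what turns the minimum $d_0$ into a generic value on a dense open set and guarantees the density of $M^o\regind$; once constancy of the intersection dimension is secured on $M^o\regind$, the analyticity of the resulting plane field and its description via the restricted $1$-forms $\alpha_i|_{M}$ are routine.
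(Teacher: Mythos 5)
The first thing to say is that the paper contains no proof of this statement: it is quoted, with attribution, from \cite{glut} (Definition 2.32 there) and used as a black box, so there is no in-paper argument to compare yours against. Judged on its own terms, your proof is correct, and it supplies exactly the argument the citation leaves implicit. The statement rests on three points, and you address each of them. First, $\sing(\mathcal{D})$ is analytic, not merely closed and nowhere dense: locally it is the set where the defining forms $\alpha_1,\dots,\alpha_p$ drop below their generic rank $\dim W-k$, an analytic condition (vanishing of minors), and the two defining properties of a singular analytic distribution give the two inclusions between this set and $\sing(\mathcal{D})$. This is what makes $M\cap\sing(\mathcal{D})$ a proper \emph{analytic} subset of the irreducible set $M$, hence nowhere dense; the hypothesis $M\nsubseteq\sing(\mathcal{D})$ alone, with $\sing(\mathcal{D})$ only closed and nowhere dense in $W$, would not give density of $U=M\regind\setminus\sing(\mathcal{D})$. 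Second, upper semicontinuity of $\delta(x)=\dim\bigl(\mathcal{D}(x)\cap T_xM\bigr)$ with analytic jump loci, via $\delta(x)=k+\dim M-\dim\bigl(\mathcal{D}(x)+T_xM\bigr)$ and lower semicontinuity of the rank of a matrix of local frames. Third, connectedness of $U$, inherited from irreducibility of $M$, which is what turns the proper analytic subset $\{\delta>d_0\}$ into a nowhere dense one. The closing identification of $\mathcal{D}_{|M}$ with the common kernels of the pullbacks $\alpha_i|_M$ correctly verifies the local description required of a singular analytic distribution.

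Two very minor points you could make explicit. The trace $M\regind\cap\sing(\mathcal{D})$ is proper in $M\regind$ because $M\regind$ is dense in $M$ and $\sing(\mathcal{D})$ is closed, so $M\regind\subseteq\sing(\mathcal{D})$ would force $M\subseteq\sing(\mathcal{D})$. And your parenthetical only justifies the inclusion of $\sing(\mathcal{D})$ into the locus where $\bigcap_i\ker\alpha_i$ has dimension $>k$; the reverse inclusion holds because at a point outside $\sing(\mathcal{D})$ that intersection equals $\mathcal{D}$, which is $k$-dimensional by definition. Neither affects the correctness of the argument.
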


\begin{remark}
When $M$ is not irreducible anymore, we still can restrict $\mathcal{D}$ to $M$ by looking at its restriction to each of the irreducible components of $M$.
\end{remark}

\noindent As in the smooth case, we can look for integral surfaces defined by the following

\begin{definition}[\cite{glut}, Definition $2.34$]
Let $\mathcal{D}$ be a $k$-dimensional analytic distribution on an irreducible analytic subset $M$ and $\ell\in\{0,\ldots,k\}$. An \textit{integral $\ell$-surface of $\mathcal{D}$} is a submanifold $S\subset M\setminus\sing(\mathcal{D})$ of dimension $\ell$ such that for all $x\in S$, we have the inclusion $T_xS\subset \mathcal{D}(x)$. The analytic distribution $\mathcal{D}$ is said to be \textit{integrable} if each $x\in M\setminus\sing(\mathcal{D})$ is contained in an integral $k$-surface. (In this case the $k$-dimensional integral surfaces form a holomorphic foliation of the manifold $M\setminus\sing(\mathcal D)$, by Frobenius theorem.)
\end{definition}

\noindent We can finally introduce the following lemma, which will be used in a key result (Corollary \ref{cor:birkhoff_integrable}). We recall here that the analytic closure of a subset $A$ of a complex manifold $W$, is the smallest analytic subset of $W$ containing $A$. We denote it by $\overline{A}\anclosure$.

\begin{lemma}[\cite{glut}, Lemma $2.38$]
\label{lemma:sad_frobenius_integrable}
Let $\mathcal{D}$ be a $k$-dimensional singular analytic distribution on an analytic subset $N$ and $S$ be a $k$-dimensional integral surface of $\mathcal{D}$. Then the restriction of $\mathcal{D}$ to $\overline{S}\anclosure$ is an integrable analytic distribution of dimension $k$.
\end{lemma}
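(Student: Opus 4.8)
The plan is to show that along a dense open subset of $M:=\overline{S}\anclosure$ the restricted distribution $\mathcal{D}_{|M}$ is nothing but the full tangent distribution of $M$; integrability then follows at once.

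First I would record the elementary geometry. Since $S$ is a connected $k$-dimensional submanifold contained in $N$ and disjoint from $\sing(\mathcal{D})$ (by definition of an integral surface), its analytic closure $M$ is an irreducible analytic subset of $N$, not contained in $\sing(\mathcal{D})$, so that the restriction $\mathcal{D}_{|M}$ of the previous Proposition is defined. By the standard properties of analytic closures, $\dim_{\cc}M=\dim S=k$. Because $M\setminus M\regind$ has dimension $<k=\dim S$, the set $S\cap M\regind$ is nonempty, open and dense in $S$; and being a $k$-dimensional submanifold sitting inside the $k$-dimensional manifold $M\regind$, it is an open subset of $M\regind$. Finally, as $S$ is an integral $k$-surface of the $k$-dimensional distribution $\mathcal{D}$, the inclusion $T_xS\subseteq\mathcal{D}(x)$ of two $k$-planes is an equality $T_xS=\mathcal{D}(x)$ at every $x\in S$.

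Next I would compute $\mathcal{D}_{|M}$ along $S$. For $x\in S\cap M\regind$ both $S$ and $M\regind$ are smooth of dimension $k$ near $x$, with $S\subseteq M\regind$ there, so $T_xM=T_xS$; combined with $T_xS=\mathcal{D}(x)$ this yields $\mathcal{D}_{|M}(x)=\mathcal{D}(x)\cap T_xM=T_xM$, a plane of dimension $k$. By definition the dimension of $\mathcal{D}_{|M}$ is the minimal value $d_0$ of $\dim(\mathcal{D}(x)\cap T_xM)$, attained on a dense open subset $M^o\regind\subseteq M\regind$. Since $M$ is irreducible, $M\regind$ is connected, so the two dense open subsets $M^o\regind$ and $S\cap M\regind$ must meet; at a common point the minimal value $d_0$ equals the value $k$ just computed, whence $d_0=k$. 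Therefore $\mathcal{D}_{|M}$ has dimension $k$ and coincides with the tangent distribution $TM$ on all of $M^o\regind$.

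Integrability is then formal. One has $\sing(\mathcal{D}_{|M})=M\setminus M^o\regind$, and for every $x\in M^o\regind$ a neighborhood of $x$ in the $k$-dimensional manifold $M^o\regind$ is an integral $k$-surface through $x$, since its tangent space at each point $y$ is $T_yM=\mathcal{D}_{|M}(y)$. Hence $\mathcal{D}_{|M}$ is an integrable analytic distribution of dimension $k$, as claimed. The crux of the argument is not dynamical but foundational: one must know that $M=\overline{S}\anclosure$ is irreducible of dimension exactly $k$ and that $S$ realizes an open piece of $M\regind$. I would lean on the standard facts about analytic closures---that the analytic closure of a connected $k$-dimensional complex submanifold is an irreducible $k$-dimensional analytic set whose regular locus meets the submanifold in a dense open set---rather than reprove them, these being the same tools already used in \cite{glut}.
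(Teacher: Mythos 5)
Your argument collapses at the very first ``foundational'' step: the claim that $\dim_{\cc}\overline{S}\anclosure=\dim S=k$ is \emph{not} a standard property of analytic closures, and it is false in general. The analytic closure of a connected $k$-dimensional complex submanifold can have strictly larger dimension: take $f$ holomorphic on the unit disk with the unit circle as natural boundary (e.g.\ a lacunary series) and let $S=\{(z,f(z))\sep |z|<1\}\subset\cc^2$; any $1$-dimensional analytic subset of $\cc^2$ containing $S$ would analytically continue $f$ across its natural boundary, so the only analytic subset of $\cc^2$ containing $S$ is $\cc^2$ itself, and $\overline{S}\anclosure=\cc^2$ has dimension $2>1$. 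The paper is explicitly built around this phenomenon: for the $2$-dimensional surface $S$ of triangular orbits, Section 4 devotes real work (Proposition \ref{prop:finite_fibers}) to proving only that $\dim M\leq 4$, and Proposition \ref{prop:dimension_birkhoff_ditrib} must treat the cases $\dim M=3$ and $\dim M=4$ separately --- all of which would be vacuous if $\dim M=2$ were automatic. Once the dimension claim fails, so does everything downstream: $S\cap M\regind$ need not be open in $M\regind$, one cannot conclude $T_xM=T_xS$, and the identification $\mathcal{D}_{|M}=TM$ on a dense open set is simply wrong when $\dim M>k$; in that case $\mathcal{D}_{|M}$ is a proper subbundle of $TM$ and integrability is a genuine Frobenius condition, not a formality.

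The correct route, which is the paper's, avoids any statement about $\dim M$. One observes that for $x\in S$ the inclusion $T_xS\subset\mathcal{D}(x)$ between $k$-planes is an equality, so $\mathcal{D}(x)=T_xS\subset T_xM$; hence the set $A=\{x\in M\setminus\sing(\mathcal{D}_{|M})\sep \mathcal{D}(x)\subset T_xM\}$, which is cut out by analytic conditions, contains $S\setminus\sing(\mathcal{D}_{|M})$. Its closure is an analytic subset of $M$ containing $S$, so by \emph{minimality} of the analytic closure it equals $M$; therefore $\mathcal{D}(x)\cap T_xM=\mathcal{D}(x)$ generically and $\mathcal{D}_{|M}$ has dimension $k$. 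The same ``analytic condition holds on $S$, hence on all of $M$'' argument applied to the Frobenius integrability condition gives integrability, and then Frobenius's theorem is invoked on $M\setminus\sing(\mathcal{D}_{|M})$. Note how this uses the defining property of $\overline{S}\anclosure$ as the smallest analytic set containing $S$, rather than any (false) dimension or regularity statement about it; that substitution of minimality for dimension is precisely the idea your proposal is missing.
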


\noindent The proof is the same as in \cite{glut}:

\begin{proof}
Write $M=\overline{S}\anclosure$. First, let us prove that $\mathcal{D}_{|M}$ is $k$-dimensional. Consider the subset 
$$A := \{x\in M\setminus\sing(\mathcal{D}_{|M})\sep \mathcal{D}(x)\subset T_xM\}.$$
It contains $S\setminus\sing(\mathcal{D}_{|M})$, hence its closure, which is an analytic subset of $M$, contains $S$. By definition, $\overline{A}\anclosure=M$ which implies that $\mathcal{D}_{|M}$ is $k$-dimensional.

Now let us show that $\mathcal{D}_{|M}$ is integrable. The argument is similar: define the subset $B$ of those $x\in M\setminus\sing(\mathcal{D}_{|M})$ such that the Frobenius integrability condition is satisfied. $B$ contains $S\setminus\sing(\mathcal{D}_{|M})$ and its closure is an analytic subset of $M$ containing $S$, hence it is the whole $M$. Thus Frobenius theorem can be applied on the manifold $M\setminus\sing(\mathcal{D}_{|M})$, which implies the result.
\end{proof}

\subsection{Birkhoff's distribution and the $3$-reflective billiard problem}
\label{subsec:birk_distrib_1}

Let us define Birkhoff's distribution attached to a complex analytic line-framed curve $\alpha$, and establish its link with the local projective billiard (Proposition \ref{prop:link_birkhoff_billiard}). We define the space $\mathcal{P}$ as the fiber bundle 
$$\mathcal{P} = \bref\underset{\cp^2}{\times}\bref$$ 
that is the set of triples $(A,L,T)$ where $A\in\cp^2$ and $L,T$ are lines in $T_A\cp^2$. Consider the space $\alpha\times\mathcal{P}^2$ of triples $z=\left(m_A,m_B,m_C\right)$ where $m_A=(A,L_A)\in\alpha$, $m_B=(B,L_B,T_B)\in\mathcal{P}$ and $m_C=(C,L_C,T_C)\in\mathcal{P}$ these notations will be used all along the paper). We define also a certain number of projections: 
\begin{itemize}
	\item $\pi_{B},\pi_C:\alpha\times\mathcal{P}^2\to\cp^2$ such that $\pi_B(z) = B$ and $\pi_C(z) = C$;
	\item $\pi_{\alpha}:\alpha\times\mathcal{P}^2\to\alpha$ the projection onto $\alpha$;
	\item $\pi_{\beta},\pi_{\gamma}:\alpha\times\mathcal{P}^2\to\bref$ such that $\pi_{\beta}(z)=(B,L_B)$ and $\pi_{\gamma}(z)=(C,L_C)$.
\end{itemize}

\noindent\textbf{Phase space.} In the space $\alpha\times\mathcal{P}^2$, we consider the subspace $M_{\alpha}^{0}$ of triangular billiard orbits having one reflection in $\alpha$, that is the set of triples
$$z=[(A,L_A),(B,L_B,T_B),(C,L_C,T_C)]$$
where $(A,L_A)\in\cp^2$, $(B,L_B,T_B)\in\mathcal{P}$ and $(C,L_C,T_C)\in\mathcal{P}$ with further properties that $A,B,C$ do not lie on the same line, $L_B\neq T_B$, $L_C\neq T_C$, the lines $AB$ and $AC$ are symmetric with respect to $(L_A,T_A)$, the lines $AB$ and $BC$ are symmetric with respect to $(L_B,T_B)$, and the lines $AC$ and $CB$ are symmetric with respect to $(L_C,T_C)$. Take $M_{\alpha}$ to be the analytic closure of $M_{\alpha}^{0}$. The set $M_{\alpha}$ is a smooth analytic subset of $\alpha\times\mathcal{P}^2$ of dimension $6$.

\vspace{0.2cm}\noindent\textbf{Birkhoff's distribution attached to $\alpha$.} We consider the distribution $D$ on $\alpha\times\mathcal{P}^2$ defined for all $z$ by
$$D(z) = d\pi_B^{-1}(T_B)\cap d\pi_C^{-1}(T_C).$$

\begin{definition}
We call \textit{Birkhoff's distribution attached to $\alpha$} the restriction of $D$ to the phase space $M_{\alpha}$, and still denote it by $D$.
\end{definition}

\begin{proposition}
\label{prop:link_birkhoff_billiard}
Let $z_0\in M_{\alpha}^0$ such that one can find a germ of $2$-dimensional integral analytic surface $S$ of $D$ containing $z_0$. Suppose that $d\pi_G$ has rank $1$ on $S$ for $G\in\{\alpha,\beta,\gamma,B,C\}$. Then there exists a $3$-reflective complex-analytic local projective billiard defined by $(\alpha,\pi_{\beta}(U),\pi_{\gamma}(U))$, with $U$ a sufficiently small neighborhood of $z_0$ in $S$.
\end{proposition}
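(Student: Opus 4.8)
The plan is to use the integral-surface condition to promote the a priori free lines $T_B,T_C$ attached to points of $\mathcal P$ into the genuine tangent lines of the projected classical boundaries, so that the three symmetry relations built into $M_\alpha^0$ become literally the projective reflection law of the billiard $(\alpha,\beta_0,\gamma_0)$, where $\beta_0=\pi_\beta(U)$ and $\gamma_0=\pi_\gamma(U)$; $3$-reflectivity will then follow from the rank hypotheses.

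First I would shrink $U$ so that $U\subset M_\alpha^0$, which is legitimate because $M_\alpha^0$ is carved out of $M_\alpha$ by the open conditions ($A,B,C$ noncollinear, $L_B\neq T_B$, $L_C\neq T_C$) and $z_0\in M_\alpha^0$; then every point of $U$ is an honest triangular orbit. I would check that $\beta_0$ and $\gamma_0$ are line-framed curves: since $\pi_B=\pi\circ\pi_\beta$ and both $\pi_B$ and $\pi_\beta$ have rank $1$ on $S$, the image $T_{m_B}\beta_0$ of $d\pi_\beta$ is one-dimensional and $d\pi$ is injective on it, so the bundle projection $\pi$ restricts to a local biholomorphism $\beta_0\to b:=\pi_B(U)$ (property 1), while the transversality $L_B\neq T_B$ inherited from $M_\alpha^0$ gives property 2; likewise for $\gamma_0$.

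The key step is the identification $T_B=T_B(b)$, the tangent line of $b$ at $B=\pi_B(z)$. Because $S$ is an integral surface of $D$, one has $T_zS\subset D(z)\subset d\pi_B^{-1}(T_B)$, hence $d\pi_B(T_zS)\subset T_B$; as $d\pi_B$ has rank $1$, its image is exactly the tangent line $T_B(b)$, and two lines one inside the other must coincide, so $T_B=T_B(b)$. The same reasoning with $\pi_C$ gives $T_C=T_C(c)$ for $c:=\pi_C(U)$. Thus the three harmonicity conditions of $M_\alpha^0$ — at $A$ with respect to $(L_A,T_A)$, at $B$ with respect to $(L_B,T_B(b))$, at $C$ with respect to $(L_C,T_C(c))$ — are precisely the reflection law of $(\alpha,\beta_0,\gamma_0)$, so every $z\in U$ is a $3$-periodic orbit of this local projective billiard, and $z_0$ supplies the required periodic orbit of base points.

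It remains to prove $3$-reflectivity, i.e.\ that $(\pi_\alpha,\pi_\beta)\colon U\to\alpha\times\beta_0$ is a local biholomorphism; this invertibility is the main obstacle, and I expect to resolve it by a first-order analysis of the orbit relations. It suffices to show $\ker(d\pi_\alpha|_{T_zS})\cap\ker(d\pi_\beta|_{T_zS})=0$. If $v\in T_zS$ annihilates both $m_A$ and $m_B$, then $A,L_A,B,L_B$, the line $AB$, the tangent $T_A$, and $T_B=T_B(b)$ (a function of $B$, hence fixed once $B$ is) are all fixed to first order; applying the symmetry involutions of Proposition~\ref{prop:symmetry} at $A$ and at $B$, which depend analytically on their data through Equation~\eqref{eq:form_symmetry}, shows that the lines $AC$ and $BC$ are fixed to first order, so their transversal intersection $C=\pi_C(z)$ (transversal since $A,B,C$ are noncollinear) is fixed to first order; finally $T_C=T_C(c)$ and the harmonic conjugate $L_C$ of $T_C$ with respect to $\{AC,CB\}$ are then fixed as well, so $v$ kills every coordinate of $z$ and $v=0$. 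Since $d\pi_\alpha$ and $d\pi_\beta$ have rank $1$ on the two-dimensional $S$, this transversality of kernels makes $(\pi_\alpha,\pi_\beta)$ a local biholomorphism. Consequently every $(m_A',m_B')\in\alpha\times\beta_0$ close to $(m_A,m_B)$ lifts to a unique $z'\in U$, which by the previous paragraph is a $3$-periodic orbit extending the pair; this is exactly $3$-reflectivity.
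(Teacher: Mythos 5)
Your proof is correct, and its first two steps coincide with the paper's: the constant rank theorem makes $b=\pi_B(U)$, $c=\pi_C(U)$, $\beta_0=\pi_\beta(U)$, $\gamma_0=\pi_\gamma(U)$ immersed curves with $\pi:\beta_0\to b$ and $\pi:\gamma_0\to c$ biholomorphic, and integrality of $S$ together with the rank-one hypotheses forces $T_B=T_Bb$ and $T_C=T_Cc$, so every point of $U$ is a genuine triangular orbit of $(\alpha,\beta_0,\gamma_0)$. Where you genuinely diverge is the $3$-reflectivity step. The paper never proves that $(\pi_\alpha,\pi_\beta)$ is a local biholomorphism: it shows that $\pi_{\alpha,B,C}:U\to\alpha\times b\times c$ is an immersion (by exhibiting a left inverse built from the two biholomorphisms above), observes that the $2$-dimensional tangent planes of its image cannot contain all three coordinate directions of the $3$-dimensional product, so at least one of the projections to $\alpha\times b$, $b\times c$, $c\times\alpha$ has open image somewhere, and then invokes Lemma~\ref{lemma:initial_conditions} to transport this open set of periodic initial conditions to the pair $\alpha\times b$. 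You instead prove directly that $\ker(d\pi_\alpha)\cap\ker(d\pi_\beta)=0$ on $T_zS$: a tangent vector annihilating $m_A$ and $m_B$ also annihilates $T_Aa$, $T_B=T_Bb$ and $AB$, hence — differentiating the reflection relations, which hold identically on $U$ and are analytic by Equation~\eqref{eq:form_symmetry} — the lines $AC$ and $BC$, their transversal intersection $C$, then $T_C=T_Cc$ and the harmonic conjugate $L_C$; it therefore annihilates every coordinate and must vanish, since $S\hookrightarrow\alpha\times\mathcal{P}^2$ is an immersion. This buys a cleaner and slightly stronger conclusion: openness for the specific pair $\alpha\times\beta_0$ at the base point itself, with no trichotomy and no appeal to Lemma~\ref{lemma:initial_conditions}; in effect you inline the first-order computation with Formula~\eqref{eq:form_symmetry} that the paper relegates to (and omits from) the proof of that lemma. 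What the paper's route buys is softness — beyond producing the section, nothing on $S$ needs to be differentiated. Your one ordering slip, invoking $L_B\neq T_Bb$ for the line-framed property of $\beta_0$ before identifying $T_B$ with $T_Bb$, is harmless, since that identification is your very next step.
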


\begin{remark}
The link between billiards and Birkhoff's distribution is not new and was introduced in \cite{bary2}. See \cite{bary2} Proposition 4.1 for a similar result in classical billiards.
\end{remark}

\begin{proof}
By the constant rank theorem, there is a neighborhood $U$ of $z_0$ in $S$ such that $\hat{b}:=\pi_B(U)$ and $\hat{c}:=\pi_C(U)$ are immersed curves of $\cp^2$, and such that $\pi_{\beta}(U)$ and $\pi_{\gamma}(U)$ are immersed curves of $\ptcp$. It follows from the assumptions that the restrictions $\pi:\pi_{\beta}(U)\to \hat{b}$ and $\pi:\pi_{\gamma}(U)\to \hat{c}$ are biholomorphisms. Therefore, the inverse maps of these restrictions, denoted by $\beta$ and $\gamma$ respectively, are line-framed curves.

Now since $S$ is an integral surface of $D$, for $z=(A,L_{A},B,L_{B},T_{B},C,L_{C},T_{C})\in U$ we have $T_B\hat{b} = d\pi_B(T_zS)\subset T_B$ and $T_C\hat{c} = d\pi_C(T_zS)\subset T_C$. Yet these spaces have the same dimension $1$, hence $T_B\hat{b} = T_B$ and $T_C\hat{c}=T_C$. But since $S\subset M_{\alpha}^0$, the lines passing through $B$, $AB$, $BC$, $T_Bb$ and $L_B$ are harmonic, and the same is true for the lines at $C$ and at $A$. Therefore any $z\in U$ corresponds to a periodic orbit
$i(z)=((A,L_{A}),(B,L_{B}),(C,L_{C}))$ of $(\alpha,\beta,\gamma)$. 

The map $\pi_{\alpha,B,C}:U\to \alpha\times \hat{b}\times \hat{c}$ verifying $\pi_{\alpha,B,C}(z) = ((A,L_A),B,C)$ is an immersion, since there is a map $s:\alpha\times \hat{b}\times \hat{c}\to\alpha\times\mathcal{P}^2$ verifying $s\circ\pi_{\alpha,B,C}(z)=z$ for any $z\in U$ (biholomorphicity of the projections $\pi_{\beta}(U)\to\hat b$ and $\pi_{\gamma}(U)\to\hat c$). Hence $\pi_{\alpha,B,C}(U)$ is an immersed surface of $\alpha\times \hat{b}\times \hat{c}$ and projects into a non-empty open subset of $I$ being either $\alpha\times\hat{b}$, or $\hat{b}\times \hat{c}$, or $\hat{c}\times\alpha$. This means that there is an open subset of initial conditions in $I$ corresponding to periodic orbits. By Lemma \ref{lemma:initial_conditions} (stated below), one can suppose that $I\subset\alpha\times\hat{b}$ and the proof is complete.
\end{proof}

\begin{lemma}
\label{lemma:initial_conditions}
Let $(\alpha_1,\alpha_2,\alpha_3)$ be a local projective billiard; let $a_j$ denote their classical boundaries. Let $(A_1,L_1)\in\alpha_1$, $(A_2,L_2)\in\alpha_2$, $(A_3,L_3)\in\alpha_3$ such that $A_1$, $A_2$, $A_3$ are pairwise distinct, the line $A_1A_2$ is transverse to $a_1$ at $A_1$, the line $A_2A_3$ is transverse to $a_3$ at $A_3$ and the reflection law holds at $A_2$. Then one can define a smooth map $T$ in a neighborhood $W\subset\alpha_1\times\alpha_2$ of $((A_1,L_1),(A_2,L_2))$, $T:W\subset \alpha_1\times \alpha_2\to \alpha_2\times \alpha_3$, such that $T$ is of rank $2$ and such that $T((A_1',L_1'),(A_2',L_2'))=((A_2',L_2'),(A_3',L_3'))$ where $A_3'$ is defined by the condition that the lines $A_1'A_2'$ and $A_2'A_3'$ are symmetric with respect to the pair $(L_2', T_{A_2'}a_2)$.
\end{lemma}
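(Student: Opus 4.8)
The plan is to define the map $T$ by unfolding the reflection at $A_2$, sending a pair of initial data to the pair consisting of the second vertex and the reflected third vertex. First I would set up local coordinates: identify a neighborhood of $(A_1,L_1)$ in $\alpha_1$ with a disk parametrizing $(A_1',L_1')$, and similarly for $\alpha_2$. Given $((A_1',L_1'),(A_2',L_2'))$ close to $((A_1,L_1),(A_2,L_2))$, the line $A_1'A_2'$ is well-defined and transverse to $a_1$ at $A_1'$ (an open condition, hence preserved under small perturbation). The reflection law at $A_2'$ is governed by Proposition \ref{prop:symmetry}: using the identification \eqref{eq:id_symmetry} and Formula \eqref{eq:form_symmetry}, the image line $A_2'A_3'$ is obtained from $A_1'A_2'$ by applying the conformal involution determined by $(L_2',T_{A_2'}a_2)$. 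Since $L_2'\neq T_{A_2'}a_2$ near the base point (this is condition 2) in the definition of a line-framed curve, which holds on $\alpha_2$), this involution is a genuine nontrivial involution, depending analytically/smoothly on $(A_2',L_2')$, and sends $A_1'A_2'$ to a well-defined line $\ell'$ through $A_2'$ distinct from $A_1'A_2'$.

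Next I would intersect $\ell'$ with $a_3$ to produce $A_3'$, together with the line field value $L_3'$ on $\alpha_3$ at that point, thus giving a candidate map
$$
T\bigl((A_1',L_1'),(A_2',L_2')\bigr) = \bigl((A_2',L_2'),(A_3',L_3')\bigr).
$$
I must check that this intersection is transverse and single-valued near the base point. At the base point the hypotheses guarantee the line $A_2A_3$ is transverse to $a_3$ at $A_3$; since $\ell'$ varies continuously and $a_3$ is a fixed germ of curve, the implicit function theorem gives a unique analytic (smooth) intersection point $A_3'$ close to $A_3$ for all data close to the base point, and $L_3'$ is then read off from the line-framing of $\alpha_3$. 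This shows $T$ is well-defined and as regular as the billiard ($\class^\infty$ or analytic).

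Finally I would verify that $T$ has rank $2$. This is the crucial point, but it is essentially immediate from the construction: the first factor of $T$ is literally the projection $((A_1',L_1'),(A_2',L_2'))\mapsto (A_2',L_2')$, which already has rank $2$ onto the two-dimensional germ $\alpha_2$. Hence $dT$ is surjective onto the tangent space of the $\alpha_2$-factor of the target $\alpha_2\times\alpha_3$, forcing $\operatorname{rk} dT\geq 2$; and since the source $W\subset\alpha_1\times\alpha_2$ is two-dimensional, the rank is exactly $2$. The main obstacle is therefore not the rank computation but confirming that the intersection step producing $A_3'$ is nondegenerate; I expect the transversality hypothesis on $A_2A_3$ and $a_3$ at $A_3$, together with $A_1,A_2,A_3$ being pairwise distinct (so the reflected line $\ell'$ genuinely meets $a_3$ away from $A_2'$), to handle exactly this, via a straightforward application of the implicit function theorem to the analytic defining equations of $a_3$ and of the symmetry \eqref{eq:form_symmetry}.
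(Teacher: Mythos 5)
Your construction of $T$ and the verification that it is well defined and smooth (reflection at $(A_2',L_2')$ via Formula \eqref{eq:form_symmetry}, then intersection of the reflected line with $a_3$ by the implicit function theorem, using transversality of $A_2A_3$ to $a_3$ at $A_3$) matches the paper's argument. But your rank computation contains a genuine error. You claim that the first component of $T$, the projection $((A_1',L_1'),(A_2',L_2'))\mapsto(A_2',L_2')$, ``already has rank $2$ onto the two-dimensional germ $\alpha_2$.'' This is false: $\alpha_2$ is a line-framed \emph{curve}, hence one-dimensional --- by Definition 1.8 the projection $\pi$ sends $\alpha_2$ diffeomorphically onto the curve $a_2$, so $L_2'$ is not an independent parameter but is determined by $A_2'$ along $\alpha_2$. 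Consequently the projection onto $\alpha_2$ has rank $1$, and your argument only yields $\operatorname{rk} dT\geq 1$, not $2$.

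The missing content is exactly what the paper isolates: since the first component of $T$ is the rank-$1$ projection onto $\alpha_2$, the map $T$ has rank $2$ if and only if, for $(A_2',L_2')=(A_2,L_2)$ frozen, the partial map $(A_1',L_1')\mapsto(A_3',L_3')$ has rank $1$. Proving this requires a computation with Formula \eqref{eq:form_symmetry} and is precisely where the transversality of $A_1A_2$ to $a_1$ at $A_1$ enters --- a hypothesis your proposal states but never uses, which is a warning sign. The mechanism is: moving $A_1'$ along $a_1$ rotates the line $A_1'A_2$ with nonzero angular speed (this fails if $A_1A_2$ is tangent to $a_1$); the reflection at $(A_2,L_2)$ is a nontrivial conformal involution of the pencil of lines through $A_2$, so it has nonzero derivative and the reflected line also rotates with nonzero speed; finally, transversality at $A_3$ converts this rotation into nonzero velocity of the intersection point $A_3'$ along $a_3$. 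With that chain of nonvanishing derivatives supplied, the proof is complete; without it, the rank claim is unsupported.
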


\begin{proof}
$T$ is well defined and smooth in a neighborhhod of $((A_1,L_1),(A_2,L_2))$ by the implicit functions theorem and by the transversality conditions. $T$ is of rank $2$ if and only if when $A_2'=A_2$ is fixed, the map $(A_1',L_1')\mapsto(A_3',L_3')$ is of rank $1$. And this is true by a computation using Formula \ref{eq:form_symmetry} which we omit (it uses the transversality condition at $A_1$).
\end{proof}

\subsection{Reduction of the space of orbits}
\label{subsec:reduction_space_orbits}

Let $\mathcal{B}=(\alpha,\beta,\gamma)$ be a $3$-reflective complex-analytic local projective billiard, and suppose that $a=\pi\circ\alpha$ is not supported by a line. We are interested in the problem of finding $2$-dimensional integral surfaces of $D$ in $M_{\alpha}$ (see Subsection \ref{subsec:birk_distrib_1} for precise definitions). In what follows we suppose that $\alpha$ is a line-framed complex analytic (regularly embedded connected) curve, say, bijectively parametrized by unit disk, such that $T_Aa\neq L_A$ for all $(A,L_A)\in\alpha$ (it is always possible by shrinking $\alpha$). 

By assumption we already have a $2$-dimensional integral surface $S$ of $D$ in $M_{\alpha}$ given by the triangular orbits of the $3$-reflective local projective billiard $\mathcal{B}$. Consider
$$M=\overline{S}^{\,\text{an}}$$
the analytic closure of $S$ in $M_{\alpha}$. In this subsection we want to prove that $\dim M\leq 4$.\\

\noindent\textbf{Construction of two analytic subsets containing $M$.} Consider the open subset $\Omega_1$ of $\alpha\times\mathcal{P}$ defined by those
$$(m_A,m_B),\, m_A = (A,L_A),\, m_B = (B,L_B,T_B)$$
for which $B\notin L_A\cup T_Aa$, and $A\notin L_B\cup T_B$ (in particular $A\neq B$). Then for each $(m_A,m_B)\in\Omega_1$, set $AB^{\ast m_A}$ the line obtained by the symmetry of the line $AB$ at $A$ with respect to $(L_A,T_Aa)$, and $AB^{\ast m_B}$ the line obtained by the symmetry of the line $AB$ at $A$ with respect to $(L_B,T_B)$. By construction of $\Omega_1$, the lines $AB^{\ast m_A}$ and $AB^{\ast m_B}$ are distinct and intersect at a point $C$. This defines a map 
$$\hat{\gamma}:\Omega_1\to\cp^2$$
which is analytic by the implicit function theorem. Then, for each $(m_A,m_B)\in\Omega_1$, the map $\hat{\gamma}(\cdot,m_B) : \alpha \to \cp^2$ is analytic and non constant (unless if at least $a$ is a line through $B$). Define 
$\Gamma(m_A,m_B)$ to be the tangent line to the germ at $\hat\gamma(m_A,m_B)$ of the analytic curve $\hat{\gamma}(\cdot,m_B)$. The analytic map
$$\Gamma:\Omega_1\to\bref$$
has the fortunate property that for all $z=[m_A,m_B,(C,L_C,T_C)]\in S$ with $(m_A,m_B)\in\Omega_1$, we have $T_C = \Gamma(m_A,m_B)$ by the $3$-reflective property of the local projective billiard corresponding to $S$. Now, $\Gamma$ extends analytically to $\Gamma':\Omega_1\to\mathcal{P}$
by setting $\Gamma'(m_A,m_B)=(\hat{\gamma}(m_A,m_B),L_C,\Gamma(m_A,m_B))$ where $L_C$ is chosen so that $AB^{\ast m_A}$ and $AB^{\ast m_B}$ are symmetric with respect to $(L_C,\Gamma(m_A,m_B))$ (see Equation \eqref{eq:form_symmetry}). Thus $S$ is in the analytic set $$M_{\alpha,\gamma} = \overline{\{(m_A,m_B,m_C)\in \Omega_1\times\mathcal{P}\sep \Gamma'(m_A,m_B)=m_C\}}$$
defined as the closure of the graph of $\Gamma'$.

Now we can do the same constructions with $m_C = (C,L_C,T_C)$ instead of $m_B$, and define analogously $\Omega_2\subset\alpha\times\mathcal{P}$, $\hat{\beta}:\Omega_2\to\cp^2$, $\mathcal{B}:\Omega_2\to\bref$, $\mathcal{B}':\Omega_2\to\mathcal{P}$ and $M_{\alpha,\beta}$ as the closure of the graph of $\mathcal{B}':\Omega_2\to\mathcal{P}$. We have obviously :

\begin{proposition}
\label{prop:relations_tangents}
$M\subset M_{\alpha}\cap M_{\alpha,\beta}\cap M_{\alpha,\gamma}$.
\end{proposition}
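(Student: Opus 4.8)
The plan is to reduce the three claimed inclusions to statements about $S$ alone and then invoke the minimality of the analytic closure. Since $M=\overline{S}^{\,\text{an}}$ is by definition the smallest analytic subset containing $S$, and since $M_{\alpha}$, $M_{\alpha,\beta}$, $M_{\alpha,\gamma}$ are all analytic subsets (the first as the analytic closure of $M_{\alpha}^{0}$, the latter two as closures of graphs), their intersection $M_{\alpha}\cap M_{\alpha,\beta}\cap M_{\alpha,\gamma}$ is again analytic. Hence it suffices to prove the three inclusions $S\subset M_{\alpha}$, $S\subset M_{\alpha,\gamma}$ and $S\subset M_{\alpha,\beta}$; applying $\overline{(\cdot)}^{\,\text{an}}$ then yields $M\subset M_{\alpha}\cap M_{\alpha,\beta}\cap M_{\alpha,\gamma}$.

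The inclusion $S\subset M_{\alpha}$ is immediate, as $S$ was produced as a $2$-dimensional integral surface of $D$ lying inside $M_{\alpha}$. For $S\subset M_{\alpha,\gamma}$, I would fix an arbitrary point $z=(m_A,m_B,m_C)\in S$, which is a genuine triangular orbit of $\mathcal{B}$, and first check that $(m_A,m_B)\in\Omega_1$. This is exactly a reformulation of the non-degeneracy built into the definition of an orbit: were $B\in L_A$ or $B\in T_Aa$, the edge $AB$ would coincide with a fixed line of the reflection at $A$ or be tangent to $a$ at $A$, forcing $A,B,C$ to be collinear or violating the transversality clause; and symmetrically for $A\in L_B\cup T_B$. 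Thus the whole of $S$ lies over $\Omega_1$.

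The substantive point is then to verify that $\Gamma'(m_A,m_B)=m_C$ componentwise. The base-point component is clear: $\hat{\gamma}(m_A,m_B)$ is the intersection of $AB^{\ast m_A}$ and $AB^{\ast m_B}$, which for an orbit are the reflected edges $AC$ and $BC$, meeting precisely at $C$. The tangent-line component $T_C=\Gamma(m_A,m_B)$ is the $3$-reflectivity property already recorded in the construction of $\Gamma$. The framing component $L_C$ requires one short remark: $\Gamma'$ defines $L_C$ as the line making $AB^{\ast m_A}$ and $AB^{\ast m_B}$ symmetric with respect to $(L_C,T_C)$, and since these two lines are $CA$ and $CB$, which in the orbit are symmetric with respect to $(L_C,T_C)$ by definition, the uniqueness of the reflecting involution fixing $T_C$ (Proposition \ref{prop:symmetry} together with Formula \eqref{eq:form_symmetry}) identifies this $L_C$ with the framing of $m_C$. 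Hence $z$ lies in the graph of $\Gamma'$, so $z\in M_{\alpha,\gamma}$, giving $S\subset M_{\alpha,\gamma}$.

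Finally, running the identical argument with the roles of $m_B$ and $m_C$ exchanged---using $\Omega_2$, $\hat{\beta}$, $\mathcal{B}$ and $\mathcal{B}'$ in place of $\Omega_1$, $\hat{\gamma}$, $\Gamma$ and $\Gamma'$---gives $S\subset M_{\alpha,\beta}$, and the three inclusions combine as above. I expect no genuine obstacle here (the author marks the statement ``obviously''); the only step needing care is the componentwise identification $\Gamma'(m_A,m_B)=m_C$, and within it the matching of the framing component $L_C$, which is precisely where the uniqueness from Proposition \ref{prop:symmetry} is used.
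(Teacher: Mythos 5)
Your proof is correct and follows essentially the same route as the paper: the paper states the proposition without proof (``We have obviously\ldots''), precisely because the inclusions $S\subset M_\alpha$, $S\subset M_{\alpha,\gamma}$, $S\subset M_{\alpha,\beta}$ are built into the construction of $\Gamma'$ and $\mathcal{B}'$, and minimality of the analytic closure $M=\overline{S}^{\,\text{an}}$ then gives the result, exactly as you argue. Your write-up merely makes explicit the componentwise identification $\Gamma'(m_A,m_B)=m_C$ (including the uniqueness of $L_C$ via Formula \eqref{eq:form_symmetry}), which is the intended content; note only that the paper is slightly more cautious in requiring $(m_A,m_B)\in\Omega_1$ pointwise rather than claiming it on all of $S$, but density of such points plus closedness of $M_{\alpha,\gamma}$ covers any degenerate orbits either way.
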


From this, we deduce :

\begin{proposition}
\label{prop:finite_fibers}
The natural projection 
$$\pi:M\to F=\{(m_A,B,C)\sep m_A\in \alpha, B\in\cp^2, C\in AB^{\ast m_A}\}$$
where $AB^{\ast m_A}$ is the line symmetric to $AB$ with respect to $(L_A,T_Aa)$, has generically finite fibers. In more details, the image $\pi(M)$ is an analytic subset in $F$ and there exists a dense subset $U\subset\pi(M)$ (a complement to a proper analytic subset) such that $\pi\ante(y)$ is finite for every $y\in U$. Hence $\dim M\leq 4$.
\end{proposition}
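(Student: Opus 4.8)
The plan is to prove the two assertions of the statement in turn — analyticity of $\pi(M)$ and generic finiteness of the fibres — and then read off the dimension bound. Observe first that $F$ is cut out of $\alpha\times\cp^2\times\cp^2$ by the single incidence condition $C\in AB^{\ast m_A}$, so $\dim F=1+2+1=4$; hence once we know that $\pi(M)$ is an analytic subset of $F$ and that $\pi\colon M\to\pi(M)$ has generically $0$-dimensional fibres, we obtain $\dim M=\dim\pi(M)\le\dim F=4$.

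For analyticity I would invoke Remmert's proper mapping theorem. The forgetful projection $\alpha\times\mathcal{P}^2\to a\times\cp^2\times\cp^2$, dropping all the line data $L_A,L_B,T_B,L_C,T_C$, is proper, its fibres being copies of $(\cp^1)^4$; the coordinate $m_A$ is carried through unchanged by $\pi$, so properness in the noncompact $\alpha$-direction is automatic. Since $M$ is a closed analytic subset and $F\subset a\times\cp^2\times\cp^2$ is closed, the restriction $\pi\colon M\to F$ is proper, and Remmert's theorem shows that $\pi(M)$ is an analytic subset of $F$.

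The heart of the matter is the fibre count. Fix a generic $y=(m_A,B,C)\in\pi(M)$. Because $M=\overline{S}\anclosure\subset M_{\alpha,\gamma}$ lies in the closure of the graph of $\Gamma'$, over a dense open subset of $M$ the projection $(m_A,m_B,m_C)\mapsto(m_A,m_B)$ is injective and $m_C=\Gamma'(m_A,m_B)$; in particular the whole fibre $\pi^{-1}(y)$ is recorded by $m_B=(B,L_B,T_B)$. The reflection condition at $B$ is a closed (harmonicity) condition and holds on all of $M_{\alpha}$, so every such $(L_B,T_B)$ lies on the curve $\Sigma_B\subset\cp^1\times\cp^1$ of pairs with respect to which $AB$ and $BC$ are symmetric; by \eqref{eq:form_symmetry} this is a smooth irreducible $(1,1)$-curve, hence $\Sigma_B\cong\cp^1$. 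Thus $\pi^{-1}(y)$ is an analytic subset of $\cp^1$, so it is either finite or all of $\Sigma_B$, and it remains only to exclude the latter for generic $y$. Here I would bring in the second graph structure $M\subset M_{\alpha,\beta}$: the relations $T_C=\Gamma(m_A,m_B)$ and $T_B=\mathcal{B}(m_A,m_C)$, together with the fact that $L_B$ (resp. $L_C$) is determined by $T_B$ (resp. $T_C$) through the harmonic formula \eqref{eq:form_symmetry}, collapse the fibre to the fixed-point set of a single self-map of $\Sigma_B$. Writing $G(T_B)=\Gamma(m_A,m_B)$ and $H(T_C)=\mathcal{B}(m_A,m_C)$ for the two tangent-transfer maps, a point of $\pi^{-1}(y)$ is exactly a solution of $T_B=H\bigl(G(T_B)\bigr)$ on $\Sigma_B\cong\cp^1$, and this has finitely many solutions unless $H\circ G=\mathrm{id}$.

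The decisive — and, I expect, hardest — step is therefore to rule out $H\circ G=\mathrm{id}$ for a generic base point; this is the only place where the standing hypothesis that $a$ is not supported by a line must enter. The map $G$ sends a choice of frame $(L_B,T_B)$ to the tangent line at the fixed point $C$ of the curve $m_A'\mapsto\hat{\gamma}(m_A',m_B)$, so it is a genuinely second-order object. I would compute its dependence on $(L_B,T_B)$ directly from \eqref{eq:form_symmetry}, showing that $G$ varies nontrivially precisely because the second-order jet of $a$ at $A$ does not degenerate to that of a line; one then argues that $H\circ G=\mathrm{id}$ throughout would force the tangent transfer to be a pair of mutually inverse Möbius maps compatible along the entire family $\Sigma_B$, an over-determination possible only when $a$ is a line, contradicting the assumption of this subsection. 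Granting this, the generic fibre of $\pi$ is finite, which supplies the dense subset $U\subset\pi(M)$ of the statement and yields the bound $\dim M\le4$.
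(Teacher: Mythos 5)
Your dimension count, your properness-plus-Remmert argument for the analyticity of $\pi(M)$, and your reduction of the fibre to an analytic subset of the $(1,1)$-curve $\Sigma_B\cong\cp^1$ (via the graph structure $M\subset M_{\alpha,\gamma}$ and harmonicity at $B$) are all correct, and they usefully fill in details the paper states tersely. But there is a genuine gap exactly where you admit the difficulty lies: having reduced the claim to excluding $H\circ G=\mathrm{id}$ on $\Sigma_B$, you offer only a plan (``I would compute\dots one then argues\dots Granting this\dots'') resting on the second-order jet of $a$ at $A$. No argument is given, and the mechanism you propose is not the right one: the impossibility of an infinite fibre has nothing to do with the curvature of $a$. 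The hypothesis that $a$ is not supported by a line enters this proposition only through the prior existence of the graph structures $M\subset M_{\alpha,\beta}\cap M_{\alpha,\gamma}$ (Proposition \ref{prop:relations_tangents}); it plays no role in the fibre argument itself, so an attempt to extract the contradiction from the second-order geometry of $a$ is likely to stall.

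The missing step is closed by a short swap-symmetry argument that works inside your own framework. The curve $\Sigma_B$ is the graph of the harmonic involution with respect to the pair $(AB,BC)$, hence it is invariant under the swap $(L_B,T_B)\mapsto(T_B,L_B)$, and $L_B\neq T_B$ away from the two degenerate points $(AB,AB)$ and $(BC,BC)$. Moreover $\Gamma'$ is swap-invariant: reflection of lines at $B$ depends only on the unordered pair $\{L_B,T_B\}$, so $\hat{\gamma}(\cdot,(B,L_B,T_B))=\hat{\gamma}(\cdot,(B,T_B,L_B))$ and hence $\Gamma'(m_A,B,L_B,T_B)=\Gamma'(m_A,B,T_B,L_B)$. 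If the fibre were infinite, i.e. all of $\Sigma_B$, pick a generic $(L_B,T_B)\in\Sigma_B$ (so $L_B\neq T_B$, $T_B\notin\{AB,BC\}$, and the relevant points lie in the domains $\Omega_1,\Omega_2$ of the graph maps); then $(T_B,L_B)$ also lies in the fibre, both points determine the same $m_C=(C,L_C,T_C)$, and the second graph relation $M\subset M_{\alpha,\beta}$ forces $(B,L_B,T_B)=\mathcal{B}'(m_A,m_C)=(B,T_B,L_B)$, i.e. $L_B=T_B$ --- a contradiction. This is precisely the paper's proof, phrased there as: if the fibre is infinite, one of the four coordinate projections onto $L_B,T_B,L_C,T_C$ maps onto $\cp^1$ (Chow plus Remmert), so one can find a second fibre point whose $T_B$-slot equals the old $L_B$. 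Note finally that the paper's symmetric treatment of all four projections also covers fibre points lying over the indeterminacy loci ($L_B=AB$ or $T_B=AB$), where your identification of the fibre with a subset of $\Sigma_B$ via $\Gamma'$ is not literally valid; your write-up silently discards these closure points.
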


\begin{proof}
Notice first that $F$ is an analytic subset of $\alpha\times\left(\cp^2\right)^2$ of dimension $4$ and $\pi(M)$ is an analytic subset of $F$. Consider the set $U\subset F$ of $(m_A,B,C)\in F$ for which $A,B,C$ do not lie on the same line, $A\in a$ is regular, $L_A\neq T_Aa$, $B$ and $C$ do not lie in $L_A\cup T_Aa$: by analyticity of these conditions, $U$ is an open set which is the complement of a proper analytic subset in $F$.

Take $(m_A,B,C)\in U$. The set $\pi^{-1}(m_A,B,C)$ is an analytic set of $\{m_A\}\times\proj(T_B\cp^2)^2\times\proj(T_C\cp^2)^2$ (which can be identified with $(\cp^1)^4$) hence is algebraic by Chow's theorem. Suppose $\pi^{-1}(m_A,B,C)$ isn't finite. Then at least one of the projections of $\pi^{-1}(m_A,B,C)$ onto either $L_B$, $T_B$, $L_C$ or $T_C$ is infinite. We suppose that it is the projection onto $T_B$, $\pi_{T_B}:\pi^{-1}(m_A,B,C)\to\cp^1$ (the cases of the other projections are treated similarly). By Remmert propper mapping an Chow's theorems, its image is $\cp^1$. Now take 
$$z_1=(m_A,B,L_B,T_B,C,L_C,T_C)\in\pi^{-1}(m_A,B,C)$$
such that $T_B\neq AB$ or $BC$. Since $\im\pi_{T_B}=\cp^1$, one can also find
$$z_2=(m_A,B,L_B',L_B,C,L_C',T_C')\in\pi^{-1}(m_A,B,C),$$
that is such that $T_B$ has been replaced by the previous $L_B$. Now $L_B'$ is such that the lines $AB$ and $BC$ are symmetric with respect to $(L_B',L_B)$ and they are already symmetric with respect to $(L_B,T_B)$, hence $L_B'=T_B$ (by Equation \eqref{eq:form_symmetry}).

By Proposition \ref{prop:relations_tangents}, $(C,L_C,T_C)=\Gamma'(m_A,B,L_B,T_B)$ and $(C,L_C',T_C')=\Gamma'(m_A,B,T_B,L_B)$. But since the maps $\hat{\gamma}(\cdot,(B,L_B,T_B))$ and $\hat{\gamma}(\cdot,(B,T_B,L_B))$ are the same (because the symmetry of lines through $B$ with respect to $(L_B,T_B)$ is the same as the symmetry of lines through $B$ with respect to $(T_B,L_B)$), hence $(C,L_C,T_C)=(C,L_C',T_C')$.

The same argument works with $(C,L_C,T_C)$: one should have $(B,L_B,T_B)=\mathcal{B}'(C,L_C,T_C)$ and $(B,T_B,L_B)=\mathcal{B}'(C,L_C',T_C')=\mathcal{B}'(C,L_C,T_C)$. It follows that $(B,L_B,T_B)=(B,T_B,L_B)$ hence that $L_B=T_B$. But this contradicts the harmonicity of $(AB,BC;T_B,L_B)$ since the lines $AB,BC,T_B$ are pairwise distinct.

Therefore $\pi^{-1}(m_A,B,C)$ is finite as soon as $(m_A,B,C)\in U$, which concludes the proof.
\end{proof}

Now consider a point $m_A\in \alpha$, and denote by $W_{m_A}$ the set of $z=(m_A,\ast,\ast)\in M$. It is an algebraic set by Chow's theorem.

\begin{lemma}
\label{lemma:epimorphic_projections}
If $\dim M\geq 3$, for a generic $m_A\in\alpha$ we have either $\pi_B(W_{m_A}) = \cp^2$ or $\pi_C(W_{m_A}) = \cp^2$.
\end{lemma}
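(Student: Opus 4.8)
The plan is to run a fibre-dimension count for the projection $M\to\alpha$, combine it with the finiteness of the fibres of $\pi$ from Proposition \ref{prop:finite_fibers}, and then analyze how a $2$-dimensional family inside the incidence variety $F_{m_A}$ can project to the two copies of $\cp^2$.

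First I would record the basic dimensions. Since $M=\overline{S}\anclosure$ is irreducible and the vertex $A$ is non-constant on the orbit surface $S$, the projection $\pi_{\alpha}:M\to\alpha$ is dominant onto the curve $\alpha$; hence for generic $m_A$ the fibre $W_{m_A}$ has dimension $\dim M-1\geq2$. The sets $W_{m_A}$, $\pi_B(W_{m_A})$, $\pi_C(W_{m_A})$ are analytic, and algebraic as subsets of $\cp^2$, by Remmert's proper mapping theorem and Chow's theorem, $\mathcal P$ being compact. By Proposition \ref{prop:finite_fibers} the map $\pi$ has generically finite fibres, so its restriction $(\pi_B,\pi_C):W_{m_A}\to F_{m_A}$, where $F_{m_A}=\{(B,C)\sep C\in AB^{\ast m_A}\}$, also has finite fibres; therefore its image $V:=(\pi_B,\pi_C)(W_{m_A})$ satisfies $\dim V=\dim W_{m_A}\geq2$. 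One checks that $\dim F_{m_A}=3$ and that the projection $\pi_B:F_{m_A}\to\cp^2$ has fibres equal to the lines $AB^{\ast m_A}$, each of dimension $1$.

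Next I would split according to $\pi_B|_V$. If $\overline{\pi_B(V)}=\cp^2$, then $\pi_B(W_{m_A})$ is a $2$-dimensional algebraic subset of $\cp^2$, hence equals $\cp^2$, giving the first alternative. Otherwise $\dim\overline{\pi_B(V)}\leq1$; since $\dim V=2$ and the $\pi_B$-fibres inside $F_{m_A}$ are at most $1$-dimensional, necessarily $\dim\overline{\pi_B(V)}=1$ and the generic fibre of $\pi_B|_V$ is the whole line $AB^{\ast m_A}$. Consequently $\pi_C(V)$ contains $\bigcup_{B}AB^{\ast m_A}$, the union being taken over a curve's worth of $B$. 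All these lines pass through $A$, because reflection at $A$ preserves lines through $A$; so I would consider the map sending $B$ to the direction of $AB^{\ast m_A}$ in the pencil $\cp^1$ of lines through $A$. If it is non-constant it is dominant, every line through $A$ occurs, the union is all of $\cp^2$, and $\pi_C(W_{m_A})=\cp^2$, the second alternative.

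The hard part is the remaining degenerate case, where all the lines $AB^{\ast m_A}$ coincide with a fixed line $\ell^{\ast}$ through $A$; equivalently $\pi_B(W_{m_A})$ lies in a single line $\ell$ through $A$ and $\pi_C(W_{m_A})\subset\ell^{\ast}$, so that neither alternative holds. I would rule this case out using the hypothesis that $a$ is not a line. The genuine-orbit fibre $S_{m_A}\subset W_{m_A}$ is $1$-dimensional, and $\pi_B(S_{m_A})$ lies in both $b$ and $\ell$. If $\pi_B(S_{m_A})$ is $1$-dimensional it is a piece of $b$ contained in the line $\ell$, forcing the support of $b$ to be $\ell$; but then $\ell$ would pass through $A(m_A)$ for generic $m_A$, and $\ell=b$ being a fixed line this gives $a\subset\ell$, i.e.\ $a$ a line. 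If instead $\pi_B(S_{m_A})$ is $0$-dimensional, then $B$ is locally constant along the $1$-parameter family $S_{m_A}$; since on $\beta$ the point $B$ determines $m_B=(B,L_B,T_B)$, the reflection at $B$ is then fixed, so both the line $AC$ (reflection of $AB$ at $A$) and the line $BC$ (reflection of $AB$ at $B$) are fixed, whence $C=AC\cap BC$ is constant and the whole orbit is constant, contradicting $\dim S_{m_A}=1$. In either situation we contradict the assumption that $a$ is not a line, so the degenerate case is vacuous and one of the two stated alternatives always holds.
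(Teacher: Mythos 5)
Your proof is correct, and its skeleton is the same as the paper's: both arguments use that for generic $m_A$ the fibre $W_{m_A}$ has dimension at least $2$, that Proposition \ref{prop:finite_fibers} makes the restriction of $\pi$ to $W_{m_A}$ generically finite so that its image in the ruled threefold $F_{m_A}$ is $2$-dimensional, and both then split on whether $\pi_B(W_{m_A})$ has dimension $2$ (first alternative) or $1$, in which case the $\pi_B$-fibres of the image are entire lines $AB^{\ast m_A}$ and $\pi_C(W_{m_A})$, being algebraic and containing uncountably many distinct lines through $A$, must equal $\cp^2$. Where you genuinely diverge is in how you guarantee that these lines are actually distinct. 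The paper gets this from the containment $b\subset\pi_B(W_{m_A})$ (asserted without proof; it needs an analytic-continuation argument for the incidence set) together with the choice $A\notin b$ — a condition which, taken literally, does not even exclude $b$ lying inside a line \emph{through} $A$, so it is only morally the right hypothesis. You instead isolate the degenerate case in which all reflected lines coincide, observe that it forces the genuine-orbit fibre $\pi_B(S_{m_A})\subset b$ into a single line $\ell$ through $A$, hence forces $b$ to be supported by $\ell$, and then uniqueness of that supporting line plus the standing hypothesis of Subsection \ref{subsec:reduction_space_orbits} that $a$ is not a line kills the case for generic $m_A$. This buys a self-contained and arguably more watertight exclusion of the degeneracy; the price is that you invoke the fibre $S_{m_A}$ of genuine orbits, so the genericity you deliver is ``dense open within the locus of $m_A$ admitting orbits'' rather than the complement of a countable subset of $\alpha$ — which is still exactly what the application in Proposition \ref{prop:dimension_birkhoff_ditrib} needs (and is vacuous after the usual shrinking of $\alpha$, $\beta$ in the definition of $3$-reflectivity). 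Two cosmetic points: dominance of the direction map $B\mapsto AB^{\ast m_A}$ only gives cofinitely many directions, not all of them, but the union of the corresponding lines is still $2$-dimensional and algebraicity of $\pi_C(W_{m_A})$ finishes the argument; and your $0$-dimensional sub-case is in fact vacuous, since $\pi_B(S_{m_A})$ is an open arc of $b$ by the parametrization of orbits by $\alpha\times\beta$, though the argument you give for it is correct.
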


\begin{proof}
For a generic $m_A\in\alpha$, $\dim W_{m_A}\geq 2$, since $dim M\geq3$, and the map $\pi$ of Proposition \ref{prop:finite_fibers} restricts to a map with generically finite fibers on $W_{m_A}$
$$\pi_{|W_{m_A}} : W_{m_A}\to F_{m_A}:=\{(m_A,B,C)\sep B\in\cp^2, C\in AB^{\ast m_A}\}.$$
Fix such a $m_A$ with further condition that $A\notin b$. Now $\pi_B(W_{m_A})$ is an algebraic set (Chow Theorem). It contains $b$, thus is of dimension at least one. Suppose it is of dimension $1$. The map $\pi_{|W_{m_A}}$ has its image in the algebraic set 
$$G_{m_A}:=\{(m_A,B,C)\sep B\in\pi_B(W_{m_A}), C\in AB^{\ast m_A}\}\subset F_{m_A}$$
of dimension $2$. Thus, its restriction to at least one
irreducible component $\widehat W_{m_A}$ of the set $W_{m_A}$ is an epimorphic map onto a two-dimensional irreducible component $\widehat G_{m_A}$ of the set $G_{m_A}$. Hence $\pi_C(\widehat W_{m_A}) = \pi_C(\widehat G_{m_A})$ contains all lines $AB^{\ast m_A}$ with $B\in\pi_B(\widehat W_{m_A})$. Since $A\notin b$, there is an uncountable number of distinct such lines (when $B$ varies on $b$ for example). Hence $\pi_C(W_{m_A}) = \cp^2$.

We have proven that for a generic $m_A\in\alpha$, either $\pi_B(W_{m_A}) = \cp^2$ or $\pi_C(W_{m_A}) = \cp^2$.
\end{proof}

\subsection{Integrability of Birkhoff's distribution on $M$}

Consider the restriction of Birkhoff's distribution $D$ to $M$, which is a singular analytic distribution on $M$, and denote it by $D_{M}$. Let us compute its dimension.

\begin{proposition}
\label{prop:dimension_birkhoff_ditrib}
The singular analytic distribution $D_M$ is $2$-dimensional.
\end{proposition}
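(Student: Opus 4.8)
The plan is to establish the two inequalities $\dim D_M\ge 2$ and $\dim D_M\le 2$ separately. For the lower bound, recall that $S\subset M$ is a $2$-dimensional integral surface of $D$, so $T_zS\subset D(z)\cap T_zM=D_M(z)$ for every $z$ in $S$ lying in the smooth locus of $M$ (the points of $S$ meeting $\sing M$ form a proper analytic subset of $S$). The function $z\mapsto\dim\bigl(D(z)\cap T_zM\bigr)$ is upper semicontinuous on the smooth locus of $M$, so the set $\{z:\dim(D(z)\cap T_zM)\ge 2\}$ is analytic; it contains $S$ minus a proper analytic set, hence its analytic closure contains $\overline{S}\anclosure=M$. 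Therefore $\dim D_M(z)\ge 2$ at every smooth point, and the generic (minimal) value defining $\dim D_M$ is at least $2$. This is the same mechanism as in the proof of Lemma \ref{lemma:sad_frobenius_integrable}.

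For the upper bound I would use Proposition \ref{prop:finite_fibers}. By construction $\pi=(\pi_\alpha,\pi_B,\pi_C)$ maps $M$ into $F$, and having generically finite fibers it is a generically finite map; hence $d\pi$ is injective on $T_zM$ for $z$ in a dense open subset of $M$. Fix such a $z$ and let $v\in D_M(z)$. By definition $d\pi_B(v)\in T_B$ and $d\pi_C(v)\in T_C$, so $d\pi(v)=(d\pi_\alpha v,d\pi_B v,d\pi_C v)$ lies in
$$V:=\bigl\{(\dot m_A,\dot B,\dot C)\in T_{\pi(z)}F\ :\ \dot B\in T_B,\ \dot C\in T_C\bigr\}.$$
Since $d\pi$ is injective at $z$, one gets $\dim D_M(z)=\dim d\pi\bigl(D_M(z)\bigr)\le\dim V$, and it remains to show $\dim V\le 2$.

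The computation of $\dim V$ is the crux. Using the submersion $F\to\alpha\times\cp^2$, $(m_A,B,C)\mapsto(m_A,B)$, whose fiber over $(m_A,B)$ is the line $AB^{\ast m_A}$ carrying $C$, a tangent vector to $F$ is a triple $(\dot m_A,\dot B,\dot C)$ with $(\dot m_A,\dot B)\in T_{m_A}\alpha\times T_B\cp^2$ free and $\dot C$ determined by $(\dot m_A,\dot B)$ modulo the tangent direction $\tau$ of the moving line $AB^{\ast m_A}$ at $C$. Imposing $\dot B\in T_B$ cuts the free part down to dimension $1+1=2$, and for each such $(\dot m_A,\dot B)$ the intersection of the affine line $\{\dot C_{\mathrm{forced}}+t\,\tau\}$ with the line $T_C$ consists of a single point, provided $\tau$ is not the direction of $T_C$, i.e. provided $AB^{\ast m_A}=AC$ is distinct from $T_C$. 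Thus $\dim V=2$ away from the locus where $AC=T_C$, which yields $\dim D_M(z)\le 2$ there; combined with the lower bound this gives $\dim D_M=2$.

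The main obstacle is precisely controlling that degenerate locus: I must check that $\{AC=T_C\}$ (and, from the symmetric construction, $\{AB=T_B\}$) is a \emph{proper} analytic subset of $M$, for otherwise $V$ could acquire an extra dimension along $\tau$. This follows because the reflected ray $AC$ is transverse to the classical boundary at $C$ for genuine billiard orbits, so the closed analytic condition $AC=T_C$ fails on $S$; as $M=\overline{S}\anclosure$ is irreducible, this condition cannot hold on all of $M$. If one prefers to avoid this transversality discussion, the epimorphicity of one of $\pi_B,\pi_C$ on the fibers $W_{m_A}$ provided by Lemma \ref{lemma:epimorphic_projections} (valid once $\dim M\ge 3$) gives an alternative route to excluding the degenerate behavior, the remaining cases $\dim M\le 2$ being immediate from $2\le\dim D_M\le\dim M$.
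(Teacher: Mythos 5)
Your proof is correct, and while your lower bound coincides with the paper's (the paper also observes $T_zS\subset D_M(z)$ and relies, as you do, on the analytic-closure mechanism of Lemma \ref{lemma:sad_frobenius_integrable} to pass from $S$ to all of $M$), your upper bound takes a genuinely different route. The paper splits into the cases $\dim M=3$ and $\dim M=4$ and, at a suitably generic point $z$, constructs explicit vectors $U$ (resp.\ $U,V$) tangent to $M$ but transverse to $D_M(z)$: these come from paths moving $B$ along the line $AB$ (with $C$ frozen in the four-dimensional case), whose existence rests on Lemma \ref{lemma:epimorphic_projections} and, when $\dim M=4$, on epimorphicity of $\pi_{|W_{m_A}}$ onto $F_{m_A}$ via Remmert's theorem; transversality to $D_M$ is then extracted from the reflection law ($T_B\neq AB$, and the direction of $AB^{\ast m_A}=AC$ not equal to $T_C$). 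You instead push $D_M(z)$ forward by $d\pi$ into $T_{\pi(z)}F$, using that the generic finiteness of the fibers of $\pi$ (Proposition \ref{prop:finite_fibers}) forces $d\pi_{|T_zM}$ to be generically injective — the fiber-dimension theorem on the irreducible set $M=\overline{S}\anclosure$ — and then bound the constraint space $V$ by pure linear algebra on the $\cp^1$-bundle structure of $F$. This eliminates both the case analysis and any appeal to Lemma \ref{lemma:epimorphic_projections}; the price is the genericity bookkeeping you correctly identify ($d\pi$ injective, $F$ smooth at $\pi(z)$, and above all $AC\neq T_C$), and your treatment of the degenerate locus is sound: the condition $AC=T_C$ is analytic where $A\neq C$, fails on $S$ because periodic orbits have $AC$ non-tangent to the classical boundary at $C$, hence cannot hold on the irreducible $M$. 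It is worth noting that both proofs must exclude exactly the same degeneracy $AC=T_C$; the paper does it pointwise via the reflection law and non-collinearity at its chosen regular point, while you do it globally via irreducibility, and both proofs consume Proposition \ref{prop:finite_fibers}, the paper through epimorphicity of restricted projections, you through generic injectivity of the differential.
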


\begin{proof}
We obviously have $\dim D_M\geq 2$ since $T_zS\subset D_M(z)$ for $z\in S$, $S$ being two dimensional. By Proposition \ref{prop:finite_fibers}, $2\leq\dim M\leq 4$ and so is $\dim D_M$. Consider two cases : $\dim M=3$ and $\dim M=4$ (the case when $\dim M=2$ being obvious). In both cases, take a regular $z=(m_A,B,L_B,T_B,C,L_C,T_C)\in M$ such that $L_B\neq T_B$, $\dim D_M(z)$ is minimal, the points $A$,$B$,$C$ are not on the same line, and $B$ and $C$ do not lie in $L_A\cup T_Aa$.

\textbf{Case when $\dim M=3$.} We just have to find one vector $U\in T_zM$ which is not in $D_M(z)$. By Lemma \ref{lemma:epimorphic_projections} we can suppose that $m_A$ is such that $\pi_B(W_{m_A}) = \cp^2$. Consider then a path $u:]-\varepsilon,\varepsilon[\to M$ with $\varepsilon>0$, such that $u(0)=z$ and $\pi_B\circ u(t)$ is a path along the line $AB$ with non-zero derivative at $0$. Consider the vector $U=u'(0)\in T_zM$. It has the property that $d\pi_B(U)$ is a vector corresponding to the derivative of $\pi_B\circ u(t)$ in $0$ which is non zero and is directed along the line $AB$. Hence
$d\pi_B(U)\notin T_B$, otherwise $T_B=AB$, $L_B\neq T_B$ by assumption, and hence, $C$ could not lie outside $AB$ by the reflection law. We conclude that $U\notin D_M(z)$.

\textbf{Case when $\dim M=4$.} Let us find two linearly independent vectors $U,V\in T_zM$ such that $D_M(z)$ and the $2$-plane spanned by $(U,V)$ have $0$-dimensional intersection. We can suppose that $m_A$ is such that $\dim W_{m_A} = 3$ (generic condition), $\pi_B(W_{m_A})=\mathbb{CP}^2$, and hence by Proposition \ref{prop:finite_fibers} that the projection
$$\pi_{|W_{m_A}} : W_{m_A}\to F_{m_A}:=\{(m_A,B,C)\sep B\in\cp^2, C\in AB^{\ast m_A}\}$$
is epimorphic (Remmert's propper mapping theorem and since $F_{m_A}$ is a connected smooth complex projective algebraic manifold). Hence we can define $U\in T_zM$ from a path $u:]-\varepsilon,\varepsilon[\to M$ with $\varepsilon>0$, such that $u(0)=z$, $\pi_B\circ u(t)$ is a path along the line $AB$ with nonzero derivative in $0$, and $\pi_C\circ u(t) \equiv C$ is constant. We change the roles of $B$ and $C$ and do the same construction to get a certain $V=v'(0)\in T_zM$. Here $v(t)$ is a path in $M$ such that $\pi_B\circ v(t)\equiv const$ and $\pi_C\circ v(t)\in AB^{*m_A}$. We then check that 
\begin{itemize}
	\item $U$ and $V$ are linearly independent since $d\pi_B(U)\neq 0$ and $d\pi_C(V)\neq 0$ while $d\pi_C(U)=0$ and $d\pi_B(V)=0$.
	\item If $pU+qV\in D_M(z)$ for $p,q\in\cmplx$, then $d\pi_B(pU+qV)\in T_B$ by the definition of $D_M$. Yet $d\pi_B(pU+qV) = pd\pi_B(U)$. Thus $p=0$ since otherwise $AB=T_B$ and we get a contradiction with the fact that $A,B,C$ do not lie on the same line. Similarly, we find that $q=0$ by considering $d\pi_C(pU+qV) = qd\pi_C(V)$.
\end{itemize}
This concludes the proof.
\end{proof}

\noindent By Lemma \ref{lemma:sad_frobenius_integrable}, we have the 

\begin{corollary}
\label{cor:birkhoff_integrable}
The singular analytic distribution $D_M$ is integrable.
\end{corollary}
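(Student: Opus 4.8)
The plan is to deduce the statement directly from Lemma \ref{lemma:sad_frobenius_integrable}, feeding in Proposition \ref{prop:dimension_birkhoff_ditrib} as the dimension hypothesis. The only task is to verify that the hypotheses of that lemma hold with ambient set $N=M$, distribution $\mathcal{D}=D_M$, and $k=2$.

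First I would observe that the surface $S$ of triangular orbits of $\mathcal{B}$, which is a $2$-dimensional integral surface of Birkhoff's distribution $D$, is also an integral surface of its restriction $D_M$. Indeed, $S\subset M$ gives $T_zS\subset T_zM$ for every $z\in S$, while $T_zS\subset D(z)$ since $S$ is integral for $D$; combining these, $T_zS\subset D(z)\cap T_zM=D_M(z)$. By Proposition \ref{prop:dimension_birkhoff_ditrib}, $D_M$ is a $2$-dimensional singular analytic distribution on $M$, so the dimension of $D_M$ matches that of its integral surface $S$, which is precisely the configuration required by Lemma \ref{lemma:sad_frobenius_integrable}.

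It then remains to apply the lemma. The one point deserving a remark is that the analytic closure of $S$ computed inside $M$ is again $M$: since $M$ is an analytic subset of $M_{\alpha}$, every analytic subset of $M$ is analytic in $M_{\alpha}$, so the smallest analytic subset of $M$ containing $S$ coincides with the smallest analytic subset of $M_{\alpha}$ containing $S$, namely $M=\overline{S}\anclosure$ by definition. Lemma \ref{lemma:sad_frobenius_integrable}, applied with $\mathcal{D}=D_M$ and $N=M$, then shows that the restriction of $D_M$ to $\overline{S}\anclosure=M$, i.e.\ $D_M$ itself, is an integrable analytic distribution of dimension $2$.

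I do not expect a genuine obstacle at this stage: the substance of the matter is Proposition \ref{prop:dimension_birkhoff_ditrib}, whose proof (ruling out $\dim D_M>2$ by producing transverse tangent vectors through Lemma \ref{lemma:epimorphic_projections} and Remmert's proper mapping theorem) is the technically demanding step. The only additional care needed is that $S$ is not contained in $\sing(D_M)$, so that the analytic sets $A$ and $B$ appearing in the proof of Lemma \ref{lemma:sad_frobenius_integrable} contain a dense part of $S$; this holds because $S$ is a smooth surface along which $D_M$ already attains its generic dimension $2$.
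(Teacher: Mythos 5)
Your proof is correct and follows the paper's own route: the corollary is precisely Lemma \ref{lemma:sad_frobenius_integrable} applied with $N=M$, $\mathcal{D}=D_M$ and $k=2$, with Proposition \ref{prop:dimension_birkhoff_ditrib} supplying the dimension hypothesis and with your (correct) observation that the analytic closure of $S$ computed inside $M$ coincides with $M$ itself. The only imprecision is in your final remark: the reason $S\not\subset\sing(D_M)$ is not the smoothness of $S$ or that it "already attains" the generic dimension, but simply that $\sing(D_M)$ is a proper analytic subset of $M$ while $M=\overline{S}\anclosure$, so no proper analytic subset of $M$ can contain $S$.
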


\section{Analytic case: proof of Theorem \ref{thm:main_theorem_analytic}}
\label{sec:border_lines}

Let $\mathcal{B}=(\alpha,\beta,\gamma)$ be a $3$-reflective complex-analytic local projective billiard, whose classical boundaries are denoted by $a$, $b$, $c$. As described in Subsection \ref{subsec:reduction_space_orbits}, we suppose that $\alpha$ is a line-framed complex analytic (regularly embedded connected) curve, say, bijectively parametrized by unit disk, such that $T_Aa\neq L_A$ for all $(A,L_A)\in\alpha$.  We say that a classical border $g$ is \textit{supported by a line} if $\im g$ is contained in a line of $\cp^2$. In this section we prove the

\begin{proposition}
\label{prop:classical_borders_lines}
The classical borders $a$, $b$, $c$ are supported by lines.
\end{proposition}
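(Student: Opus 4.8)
The plan is to argue by contradiction, exploiting the integrability of Birkhoff's distribution established in Corollary \ref{cor:birkhoff_integrable}. By the symmetry of the triangular reflection law in the three curves $\alpha$, $\beta$, $\gamma$, it suffices to prove that the distinguished boundary $a$ is supported by a line; the statements for $b$ and $c$ then follow by rerunning the entire construction of Section \ref{sec:birkhoff_distribution} with $\beta$, respectively $\gamma$, playing the role of the distinguished curve. So I assume, contrary to the conclusion, that $a$ is \emph{not} supported by a line. This is exactly the standing hypothesis of Subsection \ref{subsec:reduction_space_orbits}, under which $M=\overline{S}^{\,\text{an}}$ is defined and $D_M$ is a $2$-dimensional integrable singular analytic distribution.

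Since $D_M$ is integrable, Frobenius' theorem foliates $M\setminus\sing(D_M)$ by $2$-dimensional integral leaves. The strategy is to produce one specific leaf that violates the reflectivity it is forced to carry. First I would verify that a generic leaf $S'$ satisfies the rank-$1$ hypotheses of Proposition \ref{prop:link_birkhoff_billiard}, so that $(\alpha,\pi_\beta(S'),\pi_\gamma(S'))$ is a genuine $3$-reflective complex-analytic local projective billiard; here the dominance of the projections $\pi_B$, $\pi_C$ onto $\cp^2$ from Lemma \ref{lemma:epimorphic_projections} guarantees that $d\pi_B$ and $d\pi_C$ are not identically degenerate along the leaves. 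The key construction (the content of Subsection \ref{subsec:existence_integral}) is then to choose the leaf $S_0$ through a point $z_0\in M$ whose second vertex $B$ lies on a tangent line $L=T_{A_0}a$ of $a$ at some point $A_0$; because $a$ is not a line its tangent lines genuinely vary, and the dominance of $\pi_B$ lets me place $B$ on $L$. The resulting billiard $\mathcal{B}_0=(\alpha,\beta_0,\gamma_0)$ has the property that the classical boundary $b_0$ of $\beta_0$ meets $L$.

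The contradiction (Subsection \ref{subsec:integral_not_reflective}) comes from showing no $3$-reflective billiard can have $b_0$ crossing a tangent line $L$ of $a$. I would consider a one-parameter sub-family of orbits whose second vertex converges to the intersection point $b_0\cap L$, so that the chord $AB$ becomes tangent to $a$, i.e. $AB\to T_Aa$. In that limit the projective reflection at $A$ degenerates: $T_Aa$ is a fixed line of the involution of Proposition \ref{prop:symmetry}, so the reflected line $AB^{\ast m_A}$ also tends to $T_Aa$ (cf. Remark \ref{rem:degenerate_reflection}). Comparing the azimuths $\az$ of the incoming and outgoing lines to leading order as the vertex approaches $L$, using the explicit form \eqref{eq:form_symmetry} of the symmetry, should reveal that the asymptotic rates forced by the reflections at the three vertices are mutually incompatible for a $2$-parameter family, contradicting the $3$-reflectivity granted by Proposition \ref{prop:link_birkhoff_billiard}. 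Hence $a$ must be a line, and by symmetry so are $b$ and $c$.

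The main obstacle is precisely this asymptotic azimuth comparison near the tangency. The reflection law is singular there, so one must extract the correct leading-order expansion of $\az(AB^{\ast m_A})$ as $B\to L$ and match it against the behavior dictated at $B$ and $C$; the delicate point is to prove that this degeneration is genuinely obstructive rather than merely producing a removable singularity compatible with some limiting orbit. Controlling these competing orders of vanishing, and ruling out accidental cancellations, is where the substantive analytic work lies.
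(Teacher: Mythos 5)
Your route is the paper's route: assume $a$ is not a line, use the integrability of Birkhoff's distribution (Corollary \ref{cor:birkhoff_integrable}) to manufacture a $3$-reflective billiard $\mathcal{B}_0=(\alpha,\beta_0,\gamma_0)$ whose border $b_0$ meets a tangent line $T_{A_0}a$, and kill it by azimuth asymptotics along orbits degenerating onto that tangent line (this is exactly Subsections \ref{subsec:existence_integral} and \ref{subsec:integral_not_reflective}). But what you call "the main obstacle" is precisely the content of the proof, and you leave it open: as written there is no contradiction, only the expectation of one. Moreover, the mechanism is not the delicate cancellation-versus-removable-singularity analysis you anticipate; it is a clean sign-and-parity obstruction, and identifying it is the missing idea. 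In a chart where $\az(T_{A_0}a)=0$, set $z=\az(A_0B)$, $z^{\ast}=\az(BC)$, $z'=\az(A_0C)$ along genuine orbits $(m_{A_0},m_B,m_C)$ converging to a flat orbit on $T_{A_0}a$. The reflection at $A_0$ has fixed lines $L_{A_0}$ and $T_{A_0}a$; plugging $t=\az(T_{A_0}a)=0$ into \eqref{eq:form_symmetry} gives $z'=\ell z/(2z-\ell)\sim -z$, a sign flip (Proposition \ref{prop:equivalence_A}). By contrast, the reflections at $B_0$ and $C_0$ give $z^{\ast}\sim(2I_b-1)z$ and $z^{\ast}\sim(2I_c-1)z'$, where $I_b,I_c\geq 1$ are the intersection indices of $b_0,c_0$ with $T_{A_0}a$: when the germ of $b_0$ at $B_0$ coincides with that of $a$, one computes $\az(T_Bb_0)\sim I_b z$ with $I_b\geq2$ (Proposition \ref{prop:intersection_index}); when $B_0\neq A_0$, one first shows that $L_B$ is tangent to $a$ at a point near $A_0$ (Lemma \ref{lemma:tangent_field_of_lines}), whence $\az(L_B)\sim z$ and $z^{\ast}\sim z$, i.e.\ $I_b=1$ (Proposition \ref{prop:equivalence_azimuths}). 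Chaining the three equivalences forces $2I_b-1=-(2I_c-1)$, absurd since both sides are positive odd integers. That is Proposition \ref{prop:three_equivalences}, and it is robust: no competing orders of vanishing need to be controlled beyond these leading terms.

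There is a second, structural gap upstream. Before the limit can even be taken you need more than "$b_0$ meets $L$": you need a \emph{flat orbit}, i.e.\ a triple on $T_{A_0}a$ that is a limit of genuine orbits lying in the interior of the set of triangular orbits, together with the normalizations making the case analysis above possible (either two coinciding vertices have coinciding germs of borders, or the vertices are distinct and $T_{A_0}a$ crosses $b_0$, $c_0$ transversally). This is Propositions \ref{prop:first_step_flat_orbit} and \ref{prop:existence_flat_orbit}, and it takes real work: one restricts to the Zariski-open good locus $W_{m_A}^o$, uses a Bezout-type argument (Lemma \ref{lemma:intersection_nondegenerate}) to make some tangent line $T_{A_0}a$ hit $\pi_B(W_{m_A}^o)$ --- note that Lemma \ref{lemma:epimorphic_projections}, which you invoke, only yields dominance of $\pi_B$ \emph{or} $\pi_C$, so by itself it cannot "place $B$ on $L$" --- and then rebuilds $\gamma_0$ by analytic extension of the reflection correspondence from $A_0$, so that the family $(m_{A_0},m_B,\Gamma(m_B))$, $m_B\to m_{B_0}$, consists of interior orbits converging to the flat orbit. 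Without this construction the asymptotic equivalences cannot even be formulated, let alone proved.
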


Proposition \ref{prop:classical_borders_lines} combined with Proposition \ref{prop:reflectivity_and_lines1} will conclude the proof of Theorem \ref{thm:main_theorem_analytic}. To prove Proposition \ref{prop:classical_borders_lines}, we suppose that one of the classical borders, say $a$, is not supported by a line, and show that a contradiction arises: the first important result is Proposition \ref{prop:existence_flat_orbit} giving the existence of a particular $3$-reflective local projective billiard having $\alpha$ in its boundary. The contradiction comes from asymptotic comparisons of compex angles (or \textit{azimuths} defined in Section \ref{sec:reflection_law}) proved in subsection \ref{subsec:integral_not_reflective}. The following remark will be useful:

\begin{remark}
Let $H:U\to\ptcp$ be an analytic map of a connected Riemann surface $U$ such that $h:=\pi\circ H$ is non constant. Then $dh(x)$ is of rank $1$ for all $x$ lying outside a discrete subset of $U$. A point $p\in\im h$ is said to be \textit{regular} if there exists $z\in h\ante(p)$ for which $dh(z)\neq 0$. By shrinking $U$ around $z$ if needed, one can suppose in this case that $h$ and $H$ are diffeomorphisms on their respective images, and therefore $H(U)$ is a complex-analytic line-framed curve and $h(U)$ is its classical boundary.
\end{remark}

In this section, we will use the following classical statement, concerning duality of analytic curves~(see~ \cite{GH}):

\begin{proposition}
\label{prop:a_not_line}
Suppose $a$ is not supported by a line, and let $P\in\cp^2$. Then $P\in T_Aa$ for at most a countable number of $A\in a$.
\end{proposition}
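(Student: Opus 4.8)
The plan is to reduce the statement to a vanishing problem for a single analytic function. First I would parametrize $a$ locally by an analytic immersion $\phi\colon U\to\cp^2$ of a disk, covering the connected curve $a$ by countably many such charts, and lift $\phi$ to a map $\wt\phi\colon U\to\cc^3\setminus\{0\}$. Since $a$ is immersed, the projective velocity $d\phi(z)$ is nonzero, which is exactly the condition that $\wt\phi(z)$ and $\wt\phi'(z)$ be linearly independent for every lift; hence $T_Aa$ at $A=\phi(z)$ is the projectivization of the $2$-plane $\mathrm{span}(\wt\phi(z),\wt\phi'(z))\subset\cc^3$. Fixing a lift $\wt P$ of $P$, the condition $P\in T_Aa$ becomes the linear dependence of $\wt P$, $\wt\phi(z)$, $\wt\phi'(z)$, i.e. the vanishing of
\[
g(z) := \det\bigl[\wt P,\ \wt\phi(z),\ \wt\phi'(z)\bigr].
\]

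Next I would observe that although $g$ depends on the chosen lift and on the parameter $z$, its zero set does not: replacing $\wt\phi$ by $\mu\wt\phi$ with $\mu$ nonvanishing multiplies $g$ by $\mu^{2}$ (the extra term $\det[\wt P,\mu\wt\phi,\mu'\wt\phi]$ vanishes, two of its columns being proportional to $\wt\phi$), and reparametrizing multiplies $g$ by a nonzero factor by the chain rule. Thus $Z:=\{A\in a : P\in T_Aa\}$ is the well-defined vanishing locus of a locally defined analytic function on the \emph{connected} curve $a$. By the identity theorem, $Z$ is either discrete — hence at most countable, $a$ being second countable and covered by countably many charts — or equal to all of $a$. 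So it remains to exclude the case $Z=a$.

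This is the main step. Assuming $g\equiv0$, after a projective change of coordinates I may take $\wt P=(0,0,1)$ and write $\wt\phi=(x,y,w)$; expanding the determinant along the first column gives $g=xy'-x'y$, the Wronskian of $x$ and $y$. Its identical vanishing on the connected domain $U$ forces $x$ and $y$ to be linearly dependent over $\cc$: on the open set where $x\neq0$ one has $(y/x)'=(x'y-xy')/x^{2}=0$, so $y=\lambda x$ for a constant $\lambda$, a relation that propagates to all of $U$ by analytic continuation (the degenerate cases $x\equiv0$ or $y\equiv0$ being immediate). Hence every point $\wt\phi(z)$ satisfies the linear equation $Y=\lambda X$, i.e. $a$ lies in the line $\{Y=\lambda X\}$ of $\cp^{2}$, contradicting the hypothesis that $a$ is not supported by a line. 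Therefore $Z\neq a$, $Z$ is discrete and at most countable, which is the claim.

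The only real obstacle is bookkeeping rather than depth: one must verify that $T_Aa=\pi(\mathrm{span}(\wt\phi,\wt\phi'))$ at every immersed point and that the vanishing locus of $g$ is independent of the lift and the parametrization, so that the chartwise computation glues into a genuine alternative (discrete versus everything) on the connected curve $a$. The geometric heart is then the elementary Wronskian identity equating ``all tangent lines pass through a common point'' with ``the curve is contained in a line.''
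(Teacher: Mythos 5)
Your proof is correct, and it is worth pointing out that the paper contains no proof of this proposition at all: it is quoted as a classical statement ``concerning duality of analytic curves'' with a reference to Griffiths--Harris. So your argument is a self-contained replacement for that citation rather than a variant of an argument appearing in the paper. The two routes are close in spirit: your function $g=\det[\wt P,\wt\phi,\wt\phi']$, in the normalization $\wt P=(0,0,1)$, is exactly the Wronskian $xy'-x'y$, and its vanishing at $z$ says that the tangent line $T_{\phi(z)}a$, viewed as a point of the dual plane, lies on the line dual to $P$; thus your dichotomy (``$g$ vanishes identically or on a discrete set'') is the local-coordinate incarnation of the duality statement, and your Wronskian computation recovers the classical fact that a curve all of whose tangents pass through a fixed point is a line. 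What the citation buys is brevity; what your proof buys is that it applies verbatim to the objects the paper actually works with --- connected, immersed, merely locally defined analytic curves (e.g.\ parametrized by a disk), rather than global algebraic curves as in the standard duality reference --- and it delivers precisely the countability statement needed. Two small points you correctly labelled as bookkeeping deserve one sentence each when written out: the dichotomy uses that the set of points of $a$ near which $g\equiv0$ is both open and closed in the connected curve $a$ (closedness via the identity theorem in a chart), and the per-chart conclusion ``the image lies in a line'' globalizes because two overlapping charts produce lines sharing an arc, hence the same line.
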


\subsection{Existence of a particular $3$-reflective local projective billiard}
\label{subsec:existence_integral}

The main result of this subsection is Proposition \ref{prop:existence_flat_orbit}, which shows (under the assumption that $a$ is not supported by a line) the existence of a particular $3$-reflective local projective billiard having $\alpha$ in its boundary. We will prove then in next subsection that the existence of such billiard is impossible.

\vspace{0.2cm}
Given two analytic curves $h_1:U_1\to\cp^2$ and $h_2:U_2\to\cp^2$ defined on Riemann surfaces $U_1$, $U_2$ and two points $p_1\in\im h_1$, $p_2\in\im h_2$, we say that \textit{the germs $(h_1,p_1)$ and $(h_2,p_2)$ coincide} if $p_1=p_2$ and there is an open subset $V$ of $\cp^2$ containing the $p_i$ and for which $\im h_1\cap V = \im h_2\cap V$.

\begin{proposition}
\label{prop:first_step_flat_orbit}
Let $\mathcal{B} = (\alpha,\beta,\gamma)$ be a $3$-reflective complex-analytic local projective billiard such that $a$ is not supported by a line. Then there is a $3$-reflective complex-analytic local projective billiard $\mathcal{B}'=(\alpha,\beta',\gamma')$ with classical borders $a$, $b'$, $c'$, $m_{A_0}=(A_0,L_{A_0})\in\alpha$ and $m_{B_0}=(B_0,L_{B_0})\in\beta'$, such that at least one of the following cases holds : 
\begin{enumerate}
\item $A_0=B_0$ and the germs $(a,A_0)$ and $(b',B_0)$ coincide ;
\item the points $A_0$ and $B_0$ are distinct, and $T_{A_0}a$ intersects $b'$ transversally at $B_0$. See Figure \ref{fig7}.
\end{enumerate}
Furthermore, if $F\subset a$ is a discrete subset, we can choose $A_0\notin F$.
\end{proposition}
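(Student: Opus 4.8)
The plan is to use the integrability of Birkhoff's distribution to manufacture, out of the leaves of the resulting foliation, a new $3$-reflective billiard one of whose vertices is \emph{forced} to sit on a tangent line of $a$. By Corollary \ref{cor:birkhoff_integrable} the distribution $D_M$ is an integrable $2$-dimensional singular analytic distribution, so $M\setminus\sing(D_M)$ is foliated by $2$-dimensional integral surfaces; the original surface $S$ (carrying the orbits of $\mathcal{B}$) is one leaf, but every leaf on which $\pi_\alpha,\pi_\beta,\pi_\gamma,\pi_B,\pi_C$ have rank $1$ yields, via Proposition \ref{prop:link_birkhoff_billiard}, a $3$-reflective billiard of the form $(\alpha,\beta',\gamma')$ — the first component being always $\alpha$ since $M\subset\alpha\times\mathcal{P}^2$. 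The geometric incidence we want (a point of $b'$ lying on some $T_{A_0}a$) is automatic for global projective curves, because every point of $\cp^2$ lies on a tangent line of a non-linear curve; the genuine difficulty is that $a$ and $b'$ are only \emph{germs}, so their tangent lines sweep out small regions and need not meet. Overcoming this is exactly what the surjectivity statement of Lemma \ref{lemma:epimorphic_projections} provides.

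Concretely, I would first fix a generic $A_0\in a$, avoiding the prescribed discrete set $F$ and the countable bad loci of Proposition \ref{prop:a_not_line}, chosen so that $A_0$ is a regular point of $a$, $T_{A_0}a\neq L_{A_0}$, and (assuming $\dim M\geq3$) $\pi_B(W_{m_{A_0}})=\cp^2$; if instead the epimorphic projection in Lemma \ref{lemma:epimorphic_projections} is $\pi_C$, I relabel $\beta\leftrightarrow\gamma$, which is harmless since $\mathcal{B}'$ may carry its boundaries in either order (reversing the orientation of the triangle). Writing $L=T_{A_0}a$, surjectivity lets me choose $B_0\in L\setminus\{A_0\}$ together with a lift $z_0\in W_{m_{A_0}}$, $\pi_B(z_0)=B_0$; by Proposition \ref{prop:finite_fibers} the relevant projection is generically finite, so I can keep $z_0$ off $\sing(D_M)$ and keep $d\pi_\alpha$, $d\pi_B$ of rank $1$ there. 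Taking $S_0$ to be the leaf through $z_0$ and applying Proposition \ref{prop:link_birkhoff_billiard} at a genuine orbit $z_0'\in S_0\cap M_\alpha^0$ near $z_0$, I obtain a billiard $\mathcal{B}'=(\alpha,\beta',\gamma')$; analytically continuing $\beta'=\pi_\beta(S_0)$ to $z_0$ (legitimate since $d\pi_B$ has rank $1$ there) shows its classical boundary $b'$ contains $B_0$. Thus $A_0\in a$, $B_0\in b'$ and $B_0\in L=T_{A_0}a$, which is the required configuration, provided the intersection at $B_0$ can be controlled. For the edge case $\dim M=2$ one replaces surjectivity by the fact that $\pi_B(W_{m_{A_0}})$ is then a positive-dimensional algebraic curve, which meets the line $L$ by intersection theory.

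The remaining, and I expect hardest, part is the dichotomy between cases (1) and (2): showing that the forced incidence $B_0\in b'\cap T_{A_0}a$ is either transversal or a genuine coincidence of germs. On the leaf $S_0$ the tangent to $b'$ at $B_0$ equals the $T$-component $T_{B_0}$ of $z_0$ (this is the integral-surface identity $T_{B}\hat b=T_B$ used in the proof of Proposition \ref{prop:link_birkhoff_billiard}), so transversality at $B_0$ fails exactly when $T_{B_0}=L$. Now $B_0\in L$ forces $A_0,B_0,C_0$ to be collinear on $L$: the line $A_0B_0=L=T_{A_0}a$ is fixed by the reflection involution at $A_0$, hence $A_0C_0=L$ as well, so $z_0$ is a \emph{flat} configuration and the whole question reduces to understanding which flat limits of genuine orbits occur in $M$. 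Because $B_0$ lies in the closure of non-degenerate orbits whose vertex $B'$ crosses $L$, one expects $b'$ generically to cross $L$ transversally, i.e. $T_{B_0}\neq L$, landing us in case (2); the point is to establish this rigorously at the flat locus $Z=\{B\in T_Aa\}\subset M$, using the reflection relation $L\in\{L_{B_0},T_{B_0}\}$ forced by harmonicity (Equation \eqref{eq:form_symmetry}) to select a leaf with $L_{B_0}=L$, $T_{B_0}\neq L$.

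The only obstruction to this selection is the pathology in which every admissible flat configuration over $A_0$ has $T_{B_0}=L$; I would argue that in that situation the tangent directions of $b'$ agree with those of $a$ to the extent that, in the limit $B_0\to A_0$, the germs $(b',B_0)$ and $(a,A_0)$ become tangent to all orders and therefore coincide, which is precisely case (1) with $A_0=B_0$. In every branch the continuum of admissible $A_0$ allows me to keep $A_0\notin F$. Thus the technical crux is entirely concentrated in the analysis of the flat locus $Z$ and the proof that the tangential degeneration $T_{B_0}=L$ is avoidable unless $b'$ coincides with $a$; once that is settled, the rest is bookkeeping with the finite-fiber and surjectivity statements already established.
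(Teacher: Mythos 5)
Your overall strategy (integrability of Birkhoff's distribution plus Proposition \ref{prop:link_birkhoff_billiard} applied to a well-chosen leaf) is the paper's strategy, but your way of producing the incidence point contains a genuine gap. You fix $A_0$ and then look for a lift $z_0\in W_{m_{A_0}}$ with $\pi_B(z_0)=B_0\in T_{A_0}a$, i.e.\ the fiber and the tangent line are taken over the \emph{same} point. This diagonal choice forces degeneracy: if $z_0$ were a genuine orbit in $M_{\alpha}^{0}$ with $B_0\in T_{A_0}a\setminus\{A_0\}$, then $A_0B_0=T_{A_0}a$, and since the reflection at $(A_0,L_{A_0})$ fixes the line $T_{A_0}a$ (Equation \eqref{eq:form_symmetry}), the third vertex would also lie on $T_{A_0}a$, making $A_0,B_0,C_0$ collinear --- contradicting the definition of $M_{\alpha}^{0}$. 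So \emph{every} point of $W_{m_{A_0}}$ lying over $T_{A_0}a\setminus\{A_0\}$ is a flat configuration outside $M_{\alpha}^{0}$, exactly as you observe in your third paragraph. But then your justification collapses: Proposition \ref{prop:finite_fibers} (generic finiteness) says nothing about whether the preimages of points of this particular line avoid $\sing(D_M)$ or satisfy the rank conditions; the leaf $S_0$ through a flat point need not exist (the flat curve in $W_{m_{A_0}}$ could lie in $\sing(D_M)$), and even if it does, you have not shown that $S_0$ meets $M_{\alpha}^{0}$, which Proposition \ref{prop:link_birkhoff_billiard} requires. Your resolution of the dichotomy between cases (1) and (2) (``tangent to all orders, therefore the germs coincide'') is likewise asserted, not proved. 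In short, your construction pushes all the difficulty into an analysis of the flat locus, which is precisely what this proposition is designed to avoid (flat orbits are treated only later, in Proposition \ref{prop:existence_flat_orbit} and Subsection \ref{subsec:integral_not_reflective}).

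The missing idea is a decoupling of the fiber from the tangency point. The paper fixes $m_A=(A,L_A)$, so that the bad locus $Z_{m_A}=\pi_B(W_{m_A})\setminus\pi_B(W_{m_A}^o)$ is a \emph{fixed} proper algebraic subset of $\cp^2$ (Chevalley plus Chow), and then varies the tangency point: since $a$ is not a line, Proposition \ref{prop:a_not_line} and Bezout (Lemma \ref{lemma:intersection_nondegenerate}) give $m_{A_0}$ arbitrarily close to $m_A$, with $A_0\neq A$, such that $T_{A_0}a$ meets $\pi_B(W_{m_A}^o)$. The witnessing point $z$ then lies in the fiber over $m_A$, not over $m_{A_0}$; its $B$-vertex $B_0$ lies on the tangent line at the \emph{nearby} point $A_0$, so no collinearity is forced, and $z$ is a genuine regular orbit at which Corollary \ref{cor:birkhoff_integrable} and Proposition \ref{prop:link_birkhoff_billiard} apply verbatim to produce $\mathcal{B}'=(\alpha,\beta',\gamma')$ with $B_0\in b'\cap T_{A_0}a$. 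The dichotomy then needs no flat-locus analysis at all: once $b'$ exists as a fixed analytic curve, if $A_0=B_0$ with non-coinciding germs (or if the intersection is tangential), one perturbs $A_0$ inside $a$ so that the tangent line crosses $b'$ transversally at a nearby point, and the same perturbation keeps $A_0$ outside the prescribed discrete set $F$.
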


\begin{figure}[h]
\centering
\input{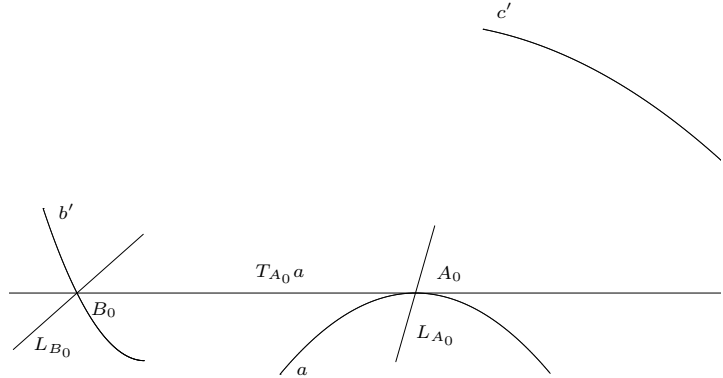}
\caption{The local projective billiard in the second case of Proposition \ref{prop:first_step_flat_orbit}: $T_{A_0}a$ intersects $b'$ transversally at $B_0$.}
\label{fig7}
\end{figure}

\begin{proof}
Consider the open subset of $M$ defined by
$$M^o = \{z\in M_{\text{reg}}\cap M_{\alpha}^{0}\sep d\pi_{G}(z) \text{ has rank } 1 \text{ on } D_{M}(z), G=\alpha,\beta,\gamma,B,C \}$$
where $\pi_{\beta}$, $\pi_{\gamma}:\alpha\times\mathcal{P}^2\to\mathcal{P}$ are the projections onto respectively the second and the third factor (in $\mathcal{P}$). By definition, $M^o$ contains an open subset of the integral surface $S$. In what follows by $S$ we denote the latter open subset and thus, consider that $M^o$ contains $S$. For $m_A\in\pi_\alpha(S)$, the set $W_{m_A}^o := M^o\cap W_{m_A}$ is an open subset of $W_{m_A}$ such that $W_{m_A}\setminus W_{m_A}^o$ is an analytic subset of $W_{m_A}$, hence is algebraic by Chow's theorem. Thus $W_{m_A}^o$ is a non-empty Zariski open subset of $W_{m_A}$. By Chevalley's theorem, $\pi_B(W_{m_A}^o)$ is a constructible subset, hence is a non-empty Zariski-open subset of $\pi_B(W_{m_A})$, which is itself either $\cp^2$ or an algebraic curve.

\begin{lemma}
\label{lemma:intersection_nondegenerate}
For $m_A\in\alpha$, we can choose $m_A'=(A',L_{A'})\in\alpha$ arbitrarily close to $m_A$, such that $T_{A'}a\cap\pi_B(W_{m_A}^o)$ is nonempty.
\end{lemma}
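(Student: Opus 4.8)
The plan is to argue by contradiction, combining Bézout's theorem with Proposition \ref{prop:a_not_line}, which is exactly where the standing hypothesis that $a$ is not supported by a line enters decisively. Recall that $\pi_B(W_{m_A}^o)$ is a nonempty Zariski-open subset of the algebraic set $\Sigma:=\pi_B(W_{m_A})$, which (by Chow's theorem) is either all of $\cp^2$ or an algebraic curve. I would treat these two possibilities in turn, keeping in mind that $\Sigma$ and $\pi_B(W_{m_A}^o)$ are attached to the \emph{fixed} $m_A$, while the point $A'$ (and hence the tangent line $T_{A'}a$) is what varies.

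First I would dispose of the case $\Sigma=\cp^2$. Here $\pi_B(W_{m_A}^o)=\cp^2\setminus Z$ for a proper algebraic subset $Z$ of dimension at most $1$, and a tangent line $T_{A'}a$ fails to meet $\pi_B(W_{m_A}^o)$ only if $T_{A'}a\subset Z$. Since $Z$ has only finitely many one-dimensional irreducible components, at most finitely many lines are contained in $Z$; and because $a$ is not a line, the analytic map $A'\mapsto T_{A'}a$ is non-constant and so takes infinitely many distinct values in every neighbourhood of $A$. Thus all but finitely many $A'$ close to $A$ yield a tangent line meeting $\pi_B(W_{m_A}^o)$. For the main case, where $\Sigma$ is a curve, I would first choose an irreducible component $\Sigma_0$ of $\Sigma$ meeting $\pi_B(W_{m_A}^o)$; then $\pi_B(W_{m_A}^o)\cap\Sigma_0=\Sigma_0\setminus F$ for a finite set $F$, since a nonempty Zariski-open subset of an irreducible curve omits only finitely many points. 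Supposing for contradiction that in some neighbourhood $\mathcal{N}$ of $A$ in $a$ no tangent line meets $\pi_B(W_{m_A}^o)$, we get $T_{A'}a\cap\Sigma_0\subset F$ for every $A'\in\mathcal{N}$.

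The contradiction then comes from a pigeonhole argument, which I regard as the heart of the proof. By Bézout's theorem in $\cp^2$ the line $T_{A'}a$ meets the curve $\Sigma_0$ in at least one point, so each $T_{A'}a$ with $A'\in\mathcal{N}$ must pass through a point of the finite set $F$ (the degenerate possibility $T_{A'}a=\Sigma_0$, which can occur only when $\Sigma_0$ is a line, is harmless, since then the intersection is infinite and cannot lie in $F$, so such $A'$ are already good). As $\mathcal{N}$ is uncountable, being open in the one-dimensional complex manifold $a$, while $F$ is finite, some fixed $p\in F$ lies on $T_{A'}a$ for uncountably many $A'\in\mathcal{N}$. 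This directly contradicts Proposition \ref{prop:a_not_line}, which guarantees that $p$ lies on $T_{A'}a$ for at most countably many $A'\in a$ precisely because $a$ is not supported by a line. The main obstacle is therefore not computational but structural: identifying that the seemingly weak statement of Proposition \ref{prop:a_not_line} is exactly strong enough to defeat Bézout's guarantee once it is paired with the uncountability of $\mathcal{N}$; the only technical care needed is the reduction to a single irreducible component $\Sigma_0$ and the uniform control over the (generically finite, occasionally infinite) intersection $T_{A'}a\cap\Sigma_0$.
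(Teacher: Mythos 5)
Your proof is correct, and it relies on exactly the two ingredients the paper's own proof uses: B\'ezout's theorem and Proposition \ref{prop:a_not_line}. The difference is in how the cases are cut and in the logical form of the main case. The paper splits on $\dim Z_{m_A}$, where $Z_{m_A}=\pi_B(W_{m_A})\setminus\pi_B(W_{m_A}^o)$: when $Z_{m_A}$ is finite it argues directly (Proposition \ref{prop:a_not_line} gives uncountably many $A'$ whose tangent lines avoid $Z_{m_A}$ altogether, and B\'ezout forces those lines to meet $\pi_B(W_{m_A})$, hence its open part); when $\dim Z_{m_A}=1$ it concludes $\pi_B(W_{m_A})=\cp^2$ and argues, as you do in your first case, that $Z_{m_A}$ contains only finitely many lines while the tangent map $A'\mapsto T_{A'}a$ is non-constant. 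Your decomposition is instead on $\pi_B(W_{m_A})$ itself ($\cp^2$ versus a curve), and your curve case is the contrapositive, pigeonhole form of the paper's finite-$Z_{m_A}$ case: rather than exhibiting good tangent lines directly, you assume all tangent lines over a neighbourhood $\mathcal{N}$ are bad, force each to pass through the finite set $F$, and let the uncountability of $\mathcal{N}$ clash with Proposition \ref{prop:a_not_line}. The two arguments are logically equivalent, but your organization buys a genuinely cleaner treatment of one corner case: if $\pi_B(W_{m_A})$ is a \emph{reducible} curve, its Zariski-closed complement $Z_{m_A}$ can itself be one-dimensional (a whole component), a configuration the paper's dichotomy ``$\dim Z_{m_A}=1\Rightarrow\pi_B(W_{m_A})=\cp^2$'' passes over in silence; your reduction to a single irreducible component $\Sigma_0$ meeting $\pi_B(W_{m_A}^o)$, with finite complement $F$ inside $\Sigma_0$, handles it without further comment. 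Conversely, the paper's direct (non-contradiction) formulation of the finite case is marginally shorter; nothing of substance separates the two proofs beyond this.
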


\begin{proof}
Fix $m_A\in\alpha$. By previous discussion, $Z_{m_A} := \pi_B(W_{m_A})\setminus\pi_B(W_{m_A}^o)$ is a strict algebraic subset of $\cp^2$. 

If $Z_{m_A}$ has dimension $0$, it is finite. Hence, since $a$ is not a line, the set $\Sigma$ of $m_A'\in\alpha$ such that $T_{A'}a$ does not intersect $Z_{m_A}$ is uncountable by Proposition \ref{prop:a_not_line}. By Bezout's theorem, for any $m_{A'}$ in $\Sigma$, $T_{A'}a$ intersects the algebraic subset $\pi_B(W_{m_A})$ of $\cp^2$ and the result is proved in this case.

If $Z_{m_A}$ has dimension $1$, $\pi_B(W_{m_A})$ is of dimension $2$, hence it equals $\cp^2$. Since $a$ is not a line, for any neighborhood $V$ of $m_A$ in $\alpha$, $Z_{m_A}$ cannot contain all of the $T_{A'}a$ for $m_{A'}\in V$ with $A'$ a regular point of $a$. Hence we can choose a $m_{A'}\in \alpha$ arbitrarily close to $m_A$ such that $T_{A'}a \nsubseteq Z_{m_A}$. Thus $T_{A'}a\cap\pi_B(W_{m_A}^o)$ is nonempty.
\end{proof} 

Fix an $m_A=(A,L_A)\in\alpha$. By Lemma \ref{lemma:intersection_nondegenerate} we can choose $m_{A_0}=(A_0,L_{A_0})\in\alpha$ arbitrarily close to $m_A$, such that $T_{A_0}a\cap\pi_B(W_{m_A}^o)$ is non-empty. This means that there is a $z\in W_{m_A}^o$ such that $B_0:=\pi_B(z)\in T_{A_0}a$. 

By Corollary \ref{cor:birkhoff_integrable}, there is a $2$-dimensional integrable surface $S_z$ through $z$. Now we can apply the same construction as in the proof of Proposition \ref{prop:link_birkhoff_billiard}: shrinking $S_z$ if necessary, $\hat{b}':=\pi_B(S_z)$ and $\hat{c}':=\pi_C(S_z)$ are immersed curves of $\cp^2$, $\pi_{\beta}(S_z)$ and $\pi_{\gamma}(S_z)$ are immersed curves of $\ptcp$, on which the restrictions $\pi_B:\pi_{\beta}(S_z)\to \hat{b}'$ and $\pi_C:\pi_{\gamma}(S_z)\to \hat{c}'$ are biholomorphisms, $L_B\neq T_B$, $L_C\neq T_C$. Therefore, the inverse maps of these restrictions, denoted by $\beta'$ and $\gamma'$ respectively, are line-framed curves, and $(\alpha,\beta',\gamma')$ is a $3$-reflective complex-analytic local projective billiard (Proposition \ref{prop:link_birkhoff_billiard}). The classical borders of $\beta$, $b':=\pi\circ\beta'$ is a curve in $\cp^2$ intersecting $T_{A_0}a$ (maybe tangentially) at $B_0$.

In the case when $A_0=B_0$ but the germs $(a,A_0)$ and $(b',B_0)$ do not coincide, we can choose $A_0'\in a\setminus b'$ sufficiently close to $A_0$ such that $T_{A_0'}a$ intersect $b'$ transversally at a point $B_0'$ close to $B_0$. By construction $A_0'\neq B_0'$. 
In any case, we can also move $A_0'$ a little so that $A_0'\notin F$ and have the same situation. Rename $A_0'$ and $B_0'$ by $A_0$ and $B_0$ to get the result.
\end{proof}

We will investigate from now on what happens when $m_B\in\beta'$ is close to $m_{B_0}$. We first can say that, under further conditions, the vertices will converge to a line. Thus we need the

\begin{definition}
Let $\mathcal{B}_0 = (\alpha_0,\beta_0,\gamma_0)$ be any local projective billiard. We say that $\mathcal{B}_0$ \textit{owns a flat orbit} if there are $m_{A}\in\alpha_0$, $m_B=(B,L_B)\in\beta_0$, $m_C=(C,L_C)\in\gamma_0$ and a line $L$ such that $A,B,C$ lie on $L$ and there exists a sequence of usual triangular orbits $(m_A,m_B^n,m_C^n)$ of $\mathcal{B}_0$ converging to $(m_A,m_B,m_C)$, such that for all $n$ $(m_A,m_B^n,m_C^n)$ belongs to an open set of triangular orbits of $\mathcal{B}_0$. The triple $(m_{A},m_B,m_C)$ is called a \textit{flat orbit} of $\mathcal{B}_0$ on $L$.
\end{definition}

\begin{proposition}
\label{prop:existence_flat_orbit}
Let $\mathcal{B} = (\alpha,\beta,\gamma)$ be a $3$-reflective complex-analytic local projective billiard such that $a$ is not supported by a line. Then there is a $3$-reflective complex-analytic local projective billiard $\mathcal{B}_0=(\alpha,\beta_0,\gamma_0)$ such that $\mathcal{B}_0$ owns a flat orbit. The corresponding flat orbit $(m_{A_0},m_{B_0},m_{C_0})$ has the following properties (see Figure \ref{fig8}):

1) The points $A_0=\pi(m_{A_0})$, $B_0=\pi(m_{B_0})$, $C_0=\pi(m_{C_0})$ lies on $T_{A_0}a$.

2) If two points among $\{A_0,B_0,C_0\}$ coincide, then the corresponding classical borders coincide. 

3) $T_{A_0}a$ intersect $b_0$ or $c_0$ transversally if $A_0\neq B_0$ or $A_0\neq C_0$ respectively. 
\end{proposition}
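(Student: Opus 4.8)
The plan is to take $\mathcal{B}_0 = \mathcal{B}'$, the $3$-reflective billiard produced by Proposition \ref{prop:first_step_flat_orbit} (so $\beta_0=\beta'$, $\gamma_0=\gamma'$ and $b_0=b'$, $c_0=c'$), and to exhibit the flat orbit as a limit of genuine triangular orbits of $\mathcal{B}'$ whose first vertex is held fixed at $m_{A_0}$ while the second vertex degenerates onto the tangent line $T_{A_0}a$. The guiding idea is an elementary but crucial fact about the reflection law: by Proposition \ref{prop:symmetry} the symmetry at $A_0$ with respect to $(L_{A_0},T_{A_0}a)$ is the involution $\sigma$ of the pencil of lines through $A_0$ whose two fixed lines are $L_{A_0}$ and $T_{A_0}a$; in particular $\sigma(T_{A_0}a)=T_{A_0}a$. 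Hence if the incoming edge $A_0B$ tends to $T_{A_0}a$, the reflected outgoing edge $A_0C=\sigma(A_0B)$ tends to $T_{A_0}a$ as well, which forces the third vertex onto $T_{A_0}a$. This is exactly the mechanism producing a flat orbit.

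First I would produce the approximating sequence. Recall from Proposition \ref{prop:first_step_flat_orbit} that the point $z$ underlying the construction of $\mathcal{B}'$ is a genuine triangular orbit with first vertex $m_A$ (a point of $\alpha$ arbitrarily close to $m_{A_0}$) and second vertex $B_0\in T_{A_0}a\cap b_0$, the intersection being transverse (case $2$). By $3$-reflectivity together with Lemma \ref{lemma:initial_conditions}, the set $\mathcal{W}\subset\alpha\times b_0$ of pairs (first vertex, second vertex) extending to a genuine triangular orbit is open, and it contains $(m_A,B_0)$. Choosing $m_{A_0}$ close enough to $m_A$ (which is allowed, $A_0$ being arbitrarily close to $A$), the slice $\{m_{A_0}\}\times b_0$ meets $\mathcal{W}$ in a neighborhood of $B_0$; since $T_{A_0}a\cap b_0$ is finite, one may pick a sequence $B^n\to B_0$ in $b_0$ with $B^n\notin T_{A_0}a$ and $(m_{A_0},B^n)\in\mathcal{W}$. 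Each such pair extends to a genuine triangular orbit $(m_{A_0},m_B^n,m_C^n)$ lying in the open set of orbits of $\mathcal{B}'$; these are the orbits whose limit will be the flat orbit.

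Then I would pass to the limit. Since $B^n\to B_0\in T_{A_0}a$ we have $A_0B^n\to T_{A_0}a$, so by the fixed-line property $A_0C^n=\sigma(A_0B^n)\to T_{A_0}a$; as $C^n\in c_0$ lies on $A_0C^n$, transversality of $T_{A_0}a$ and $c_0$ yields $C^n\to C_0:=T_{A_0}a\cap c_0\in T_{A_0}a$, the framings $L_{B^n},L_{C^n}$ converging by continuity of Formula \eqref{eq:form_symmetry}. This gives a flat orbit $(m_{A_0},m_{B_0},m_{C_0})$ with all three base points on $T_{A_0}a$, namely property $1$. Property $3$ (transversality of $T_{A_0}a$ with $b_0$ and with $c_0$) is, on the $b_0$ side, precisely case $2$ of Proposition \ref{prop:first_step_flat_orbit}, and on the $c_0$ side follows from the same degeneration analysis at $C_0$; using the freedom $A_0\notin F$ for a suitable discrete set $F$ one discards the finitely many tangential positions. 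Property $2$ (coinciding base points force coinciding germs) is handled locally: when $A_0=B_0$ one is in case $1$ of Proposition \ref{prop:first_step_flat_orbit}, where the germs of $a$ and $b_0$ already coincide, and the cases $A_0=C_0$ or $B_0=C_0$ are reduced to it by analyticity of the reflection law.

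The main obstacle I anticipate is the first step: the flat orbit must be a limit of genuine orbits with the first vertex \emph{held fixed}, so it is not enough to know that the degenerate configuration $(m_{A_0},B_0)$ (where the edge $A_0B_0\subset T_{A_0}a$ is tangent to $a$) lies in $\overline{\mathcal{W}}$ --- one must ensure that the single slice $\{m_{A_0}\}\times b_0$ actually penetrates $\mathcal{W}$ and does so arbitrarily close to $B_0$. This is where the openness of $\mathcal{W}$ around the witness orbit $z$, the transversality of $T_{A_0}a$ with $b_0$, and the freedom to take $A_0$ close to $A$ must be combined with care. A secondary difficulty is controlling the limit when vertices collide, which is exactly what properties $2$ and $3$ encode and what forces the case distinction above.
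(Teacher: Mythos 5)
Your guiding intuition (the reflection at $m_{A_0}$ fixes $T_{A_0}a$, so pinning the first vertex at $m_{A_0}$ and letting the second vertex run into $T_{A_0}a$ forces the whole orbit onto that tangent line) is exactly the mechanism used at the end of the paper's proof. But the first step of your argument has a genuine gap, and it is precisely the obstacle you flagged. You take $\mathcal{B}_0=\mathcal{B}'$ and need genuine orbits of $\mathcal{B}'$ whose first vertex is \emph{exactly} $m_{A_0}$ and whose second vertex approaches $m_{B_0}$. However, the only openness available is that the orbit set $\mathcal{W}$ of $\mathcal{B}'$ contains a neighborhood of the pair underlying the point $z$ from the proof of Proposition \ref{prop:first_step_flat_orbit}, and that pair has first component $m_A$, not $m_{A_0}$. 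Your proposed fix --- ``choosing $m_{A_0}$ close enough to $m_A$'' --- is circular: in that construction $m_{A_0}$ must be chosen \emph{before} $z$ (one needs $\pi_B(z)\in T_{A_0}a$, by Lemma \ref{lemma:intersection_nondegenerate}), and $z$ determines the integral surface $S_z$, hence $\mathcal{B}'$, hence $\mathcal{W}$. If you move $m_{A_0}$ closer to $m_A$, the line $T_{A_0}a$ moves, so $z$, $S_z$, $\beta'$, $\gamma'$ and $\mathcal{W}$ all change, with no uniform lower bound on the size of the new orbit neighborhood. Nothing guarantees that the slice $\{m_{A_0}\}\times\beta'$ meets $\mathcal{W}$ at all, let alone arbitrarily close to $m_{B_0}$; note also that in case 1 of Proposition \ref{prop:first_step_flat_orbit} the points $m_{A_0},m_{B_0}$ are even perturbed \emph{after} $\mathcal{B}'$ is built, so they carry no a priori relation to its orbit set.

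The paper resolves this by \emph{not} reusing $\gamma'$. It manufactures a new framed curve $\gamma_0$: for each $m_B\in\beta'$ (away from a discrete bad set $K$), it reflects the line $A_0B$ at $m_{A_0}$ and at $m_B$, lets $\hat\gamma(m_B)$ be the intersection point of the two reflected lines, and frames this locus by the line $L_C(m_B)$ determined by harmonicity with the tangent $T(m_B)$ of the curve $\hat\gamma$. By construction, $(m_{A_0},m_B,\Gamma(m_B))$ is then a triangular orbit of $(\alpha,\beta',\gamma_0)$ for \emph{every} such $m_B$ --- no openness of the orbit set of $\mathcal{B}'$ around pairs $(m_{A_0},\cdot)$ is ever needed. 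The remaining work (which your proposal also lacks) is to check that these orbits lie in an \emph{open} set of orbits of the new billiard, as the definition of a flat orbit demands; the paper gets this by showing the analogous two-variable map $\Gamma'$ on $\alpha\times\beta'$ has rank one on the open set $U\times V$ where $\mathcal{B}'$ genuinely has orbits, and then propagating this rank condition to all of $\alpha\times\beta'\setminus K'$ by analyticity. Without this explicit construction of $\gamma_0$ and the analytic-continuation step, the flat orbit cannot be exhibited, so the proposal as written does not prove the proposition.
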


\begin{figure}[h]
\centering
\input{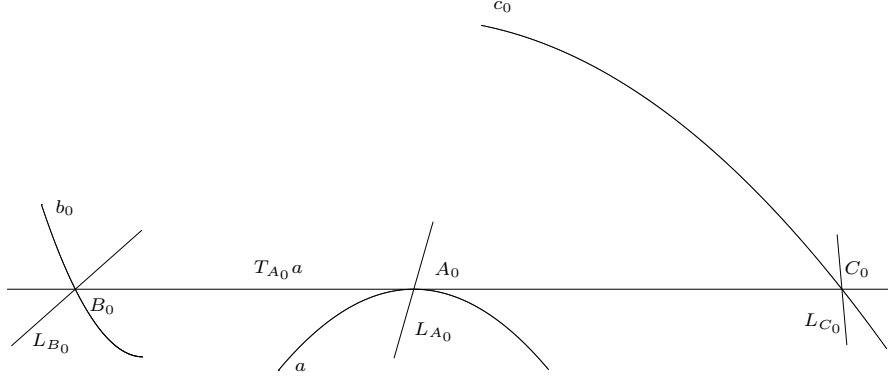}
\caption{The local projective billiard of Proposition \ref{prop:existence_flat_orbit} with a flat orbit $(m_{A_0},m_{B_0},m_{C_0})$ on $T_{A_0}a$. Here the three points $A_0$, $B_0$, $C_0$ are pairwise distinct.}
\label{fig8}
\end{figure}

\begin{proof}
Let $\mathcal{B}'=(\alpha,\beta',\gamma')$, $m_{A_0}=(A_0,L_{A_0})$, $m_{B_0}$ be respectively the local projective billiard and points from Proposition \ref{prop:first_step_flat_orbit}, verifying the further following conditions (which can be realized by choosing $F$ in a convenient way): 1) if $b'$ is supported by a line $\ell$, then $A_0\notin\ell$; 2) the set of points $m_B=(B,L_B)\in\beta'$ for which the line $L_B$ passes through $A_0$ is at most countable.

First let us construct $\gamma_0$ in the following way: for any $m_B=(B,L_B)\in\beta'$, denote by $A_0B^{\ast m_{A_0}}$ the line reflected from $A_0B$ at $m_{A_0}$ with respect to $(L_{A_0},T_{A_0}a)$, and by $A_0B^{\ast m_B}$ the line reflected from $A_0B$ at $m_B$ with respect to $(L_B,T_Bb')$. Denote by $K$ the (discrete) subset of $\beta'$ corresponding to the points where the lines $A_0B^{\ast m_A}$ and $A_0B^{\ast m_B}$ are the same or are not well-defined. Consider the map $\hat{\gamma}:\beta'\setminus K\to\cp^2$, which associates to any $m_B=(B,L_B)\in\beta'$ the point $C$ of intersection of the lines $A_0B^{\ast m_A}$ and $A_0B^{\ast m_B}$: $\hat{\gamma}$ is defined and analytic on $\beta'\setminus K$. Note that $\hat{\gamma}$ can be extended analytically to $m_{B_0}$: by solving the corresponding equations of lines defining $C$, we find that, around $m_{B_0}$, the coordinates of $\hat{\gamma}$ are rational fractions in the coordinates of $A_0B^{\ast m_A}$ and $A_0B^{\ast m_B}$.

Now $\hat{\gamma}$ is non-constant by the initial choice of $A_0$ (see conditions 1) and 2) imposed to $A_0$): the line $A_0B$ is not constant, so are $A_0B^{\ast m_A}$ and $A_0B^{\ast m_B}$. If $\hat{\gamma}$ were constant, it would be equal to $A_0$. But this would imply for all $B\in B$ that $A_0B^{\ast m_B}=A_0B$, hence that $A_0B=L_B$ or $T_Bb'$, contradicting the initial choice of $A_0$. Hence locally around each $m_B\notin K$, $\hat{\gamma}$ parametrises an analytic curve inside an open subset of $\cp^2$ and we can define $T(m_B)$ to be its tangent line at $\hat{\gamma}(m_B)$. Hence we can contruct an analytic map $\Gamma:\beta_0\setminus K\to\bref$ by setting $\Gamma(m_B)=(\hat{\gamma}(m_B),L_C(m_B))$ where $L_C(m_B)$ is the unique line through $C$ such that the lines $A_0B^{\ast m_A}$, $A_0B^{\ast m_B}$, $T(m_B)$ and $L_C(m_B)$ are harmonic (see Equation \eqref{eq:form_symmetry}). We set $\gamma_0=\Gamma$ and $\beta_0=\beta'$ which ends this first step.

Let us show that $(\alpha,\beta_0,\gamma_0)$ is a $3$-reflective local projective billiard. Indeed, $\mathcal{B}'=(\alpha,\beta',\gamma')$ is a $3$-reflective billiard, hence there are open subsets $U\subset\alpha$ and $V\subset\beta'$ verifying the following condition: for any $(m_A,m_B)\in U\times V$ there is an $m_C\in \gamma'$ such that $(m_A,m_B,m_C)$ is a $3$-periodic projective billiard orbit. As before, we can construct an analytic map $\hat{\gamma}':\alpha\times\beta_0\setminus K'\to\cp^2$, where $K'$ is a proper analytic subset of $\alpha\times\beta_0$, by setting $\hat{\gamma}'(m_A,m_B)$ to be the intersection point of $AB^{\ast m_A}$ and $AB^{\ast m_B}$ ($K'$ corresponds to the set of $(m_A,m_B)$ for which the lines $AB^{\ast m_A}$ and $AB^{\ast m_B}$ are the same or are not well-defined; note that $(m_{A_0},m_{B_0})\in K'$). 

Now $\hat{\gamma}'$ is of rank one on $U\times V$, hence on $\alpha\times\beta_0\setminus K'$, by connectedness of $\alpha\times\beta_0\setminus K'$ and by analyticity of the condition "being of rank at most one". As before, we can extend $\hat{\gamma}'$ into an analytic map $\Gamma':\alpha\times\beta_0\setminus K'\to\bref$, where $\Gamma'(m_A,m_B)=(\hat{\gamma}(m_A,m_B),L_C(m_A,m_B))$ and $L_C(m_A,m_B)$ is defined by the same rule as previously: it is the unique line through $C$ such that the lines $AB^{\ast m_A}$, $AB^{\ast m_B}$, $T(m_A,m_B)$ and $L_C(m_A,m_B)$ are harmonic, with $T(m_A,m_B)$ being the tangent line at $(m_A,m_B)$ of the germ of curve locally parmetrized by $\hat{\gamma}'$.
Again $\Gamma'$ is of rank one on $U\times V$, hence on $\alpha\times\beta_0\setminus K'$. In particular, for any $m_B\in\beta_0$ for which $(m_{A_0},m_B)\notin K'$, there is an open subset $U_0\times V_0\subset \alpha\times\beta_0$ containing $(m_{A_0},m_{B})$ such that $\Gamma'(U_0\times V_0)\subset\im \gamma_0$. Hence  $(\alpha,\beta_0,\gamma_0)$ is a $3$-reflective local projective billiard.

Finally, and by construction, when $m_B\to m_{B_0}$, $m_C:=\Gamma(m_B)$ is such that $(m_{A_0},m_B,m_C)$ is a triangular billiard orbit on $(\alpha,\beta_0,\gamma_0)$ accumulating on $T_{A_0}a$ (since $A_0B$ goes to $T_{A_0}a$ by definition, and thus also $A_0C=A_0B^{\ast m_A}$), and belonging to the interior of the set of triangular orbits by previous paragraph. Write $m_{C_0}=\lim_{m_B\to m_{B_0}}\Gamma(m_B)$, $b_0=\pi(\beta_0)$ and $c_0=\pi(\gamma_0)$. If $A_0=C_0$ and the germs $(c_0,C_0)$ and $(a, A_0)$ do not coincide, one can move $m_{A_0}$ a little so that $A_0\notin c_0$ and get the same conclusions with $A_0\neq C_0$. The same operation can be used to suppose that if $B_0=C_0$, the germs $(c_0,C_0)$ and $(b_0, B_0)$ coincide.
\end{proof}

\subsection{The $3$-reflective local projective billiard of Proposition \ref{prop:existence_flat_orbit} cannot exist}
\label{subsec:integral_not_reflective}

Let $\mathcal{B} = (\alpha,\beta,\gamma)$ be a $3$-reflective complex-analytic projective billiard such that $a$ is not supported by a line. Let $\mathcal{B}_0=(\alpha,\beta_0,\gamma_0)$, $m_{A_0}$, $m_{B_0}$, $m_{C_0}$ be respectively the $3$-reflective local projective billiard and points from Proposition \ref{prop:existence_flat_orbit}. Let $a,b_0,c_0$ be the classical borders of $\alpha,\beta_0,\gamma_0$ respectively. We want to show that $\mathcal{B}_0$ cannot exist.
 

To prove that, let us choose an affine chart $\mathbb C^2\subset\mathbb{CP}^2$ containing the points  $A_0$, $B_0$, $C_0$. Let us choose affine coordinates there so that
$$\az(T_{A_0}a)=0\qquad\text{ and }\qquad\infty\notin\{\az(T_{B_0}b_0),\az(T_{C_0}c_0),\az(L_{A_0})\}.$$
We will write until the end of this section
$$z=\az(A_0B), \quad z\etoile=\az(BC),\quad z'=\az(A_0C)$$
for any orbit $(m_{A_0},m_B,m_C)$ on $\mathcal{B}_0$ (see Figure \ref{fig9}, and section \ref{sec:reflection_law} for further details on azimuths). To be more precise, we will show the:

\begin{proposition}
\label{prop:three_equivalences}
When $m_B\to m_{B_0}$, the following asymptotic equivalences are satisfied: 
$$z' \sim (-z),\qquad z\etoile \sim (2I_b-1)z,\qquad z\etoile\sim (2I_c-1)z'$$
where $I_b$ (respectively, $I_c$) is the intersection index of $b$ (respectively $c$) with the tangent line $T_{A_0}a$ at $B_0$ (respectively $C_0$).
\end{proposition}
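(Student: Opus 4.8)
The plan is to read off each of the three equivalences from the projective reflection law at the corresponding vertex (Proposition \ref{prop:symmetry}), written in the azimuth chart fixed above where $\az(T_{A_0}a)=0$. Throughout I identify a line with its azimuth and use that, by Equation \eqref{eq:form_symmetry}, two lines of azimuths $z,z^*$ through a point $P$ are symmetric with respect to lines of azimuths $\lambda=\az(L_P)$ and $\tau=\az(T_P)$ exactly when
\[2zz^*-(\lambda+\tau)(z+z^*)+2\lambda\tau=0,\]
i.e. $z^*$ is the image of $z$ under the involution of $A^{\ast}\simeq\cp^1$ fixing $\lambda$ and $\tau$.

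First I would treat the reflection at $A_0$, which is exact because it involves the fixed curve $a$. Its fixed azimuths are $\az(L_{A_0})\neq 0$ and $\az(T_{A_0}a)=0$; as an involution of $\cp^1$ it has derivative $-1$ at its fixed point $0$, so $z'=\sigma_{A_0}(z)=-z+O(z^2)$, giving $z'\sim -z$. (Equivalently, solving the displayed identity yields $z'=\az(L_{A_0})z/(2z-\az(L_{A_0}))\sim -z$ as $z\to 0$.)

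The two remaining equivalences are symmetric, so I focus on $B$. The key subtlety is that here the reflection \emph{moves with the orbit}: $z^*=\sigma_{B(t)}(z(t))$ as $B(t)$ runs over $b_0$ through $B_0$, so the intersection index $I_b$ enters through the rate at which the fixed-point data of $\sigma_{B(t)}$ vary relative to $z(t)\to 0$. I would record the two regimes permitted by Proposition \ref{prop:existence_flat_orbit}. In the contact regime ($B_0=A_0$, $b_0=a$ near $A_0$, $I_b\ge 2$), the local parametrization $B=(u,\psi(u))$ with $\mathrm{ord}_0\psi=I_b$ gives at once
\[\az(A_0B)=\psi(u)/u\sim\beta u^{I_b-1},\qquad \az(T_Bb_0)=\psi'(u)\sim I_b\beta u^{I_b-1},\]
so $\az(T_Bb_0)\sim I_b\,\az(A_0B)$. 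Since the field is transverse to $b_0$ we have $\az(L_{B_0})\neq 0$, and the lowest-order ($u^{I_b-1}$) balance in the displayed symmetry identity makes the field azimuth cancel, leaving $z^*\sim 2\az(T_Bb_0)-z\sim(2I_b-1)z$.

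The transverse regime ($B_0\neq A_0$, $I_b=1$) is where I expect the main obstacle. Now $\az(T_Bb_0)\to\az(T_{B_0}b_0)\neq 0$, while the flat-orbit condition (azimuth $0$ is fixed by the limiting involution) forces $\az(L_{B_0})=0$, i.e. $L_{B_0}=A_0B_0$; the symmetry identity then gives $z^*\sim 2\az(L_B)-z$. To conclude $z^*\sim z=(2I_b-1)z$ I must establish the \emph{first-order matching} $\az(L_B)\sim\az(A_0B)$, i.e. that the field line tracks the chord $A_0B$ to first order at $B_0$. This cannot be extracted from the one-parameter flat-orbit family alone: it genuinely uses $3$-reflectivity, pinning $L_B$ through the full two-parameter family of orbits through $B$ (differentiating the reflection at $B$ in the direction of $A$ along $a$ and comparing with the closing condition on $c_0$). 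This derivative computation of the field is the hard step; once it is in place, the identical analysis at $C$ gives $z^*\sim(2I_c-1)z'$, and combining the three equivalences with $z'\sim -z$ forces $I_b+I_c=1$, the contradiction to be exploited in the sequel.
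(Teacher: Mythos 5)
Your treatment of the reflection at $A_0$ and of the tangency case $B_0=A_0$ is correct and coincides with the paper's own argument (Propositions \ref{prop:equivalence_A} and \ref{prop:intersection_index}): in both regimes one substitutes the asymptotics $t=\az(T_Bb_0)\sim I_b\,z$ and $\ell=\az(L_B)\to\az(L_{B_0})\neq 0$ into Formula \eqref{eq:form_symmetry} and reads off $z\etoile/z\to 2I_b-1$; likewise your case split into the contact regime ($I_b\ge 2$) and the transverse regime ($I_b=1$) is exactly the dichotomy provided by Proposition \ref{prop:existence_flat_orbit}, and your identification $L_{B_0}=T_{A_0}a$ in the transverse case is correct.

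In the transverse case, however, your proposal has a genuine gap, which you acknowledge: the whole proof reduces to the first-order matching $\az(L_B)\sim\az(A_0B)$, and this is precisely the step you do not carry out; the strategy you sketch (differentiating the reflection law at $B$ within the two-parameter family and ``comparing with the closing condition on $c_0$'') is left entirely undeveloped, so the proposition is not proved. The paper closes this gap by a different and more geometric mechanism (Lemma \ref{lemma:tangent_field_of_lines}). One repeats the flat-orbit degeneration of Proposition \ref{prop:existence_flat_orbit} with the base point $m_{A_0}$ replaced by an arbitrary nearby $m_{A_1}\in\alpha$: for the point $B_1=T_{A_1}a\cap b_0$, letting $m_B\to m_{B_1}$ forces $A_1C\to T_{A_1}a$, and since $C_1\neq B_1$ (Lemma \ref{lemma:distinct_points1}) the line $T_{A_1}a$ is invariant under the reflection at $m_{B_1}$; transversality to $b_0$ then yields $L_{B_1}=T_{A_1}a$. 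Because the assignment $m_{A_1}\mapsto m_{B_1}$ is open (here the hypothesis that $a$ is not a line is used), this identifies the entire line field of $\beta_0$ near $m_{B_0}$: each $L_B$ is the tangent line to $a$ at a point $A$ converging to $A_0$. The matching $\ell\sim z$ is then elementary geometry, with no differentiation of the field: both $L_B$ and $A_0B$ pass through $B$, which stays at bounded distance from $A_0$, while they meet the axis $T_{A_0}a$ at the points $L_B\cap T_{A_0}a$ and $A_0$ respectively, which converge to one another (tangent lines of $a$ at points near $A_0$ cut $T_{A_0}a$ near $A_0$); hence the ratio of azimuths tends to $1$. Until this tangency description of the field --- the actual content of your ``hard step'' --- is established, your argument proves only the first equivalence and the contact case of the second.
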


\noindent From Proposition \ref{prop:three_equivalences}, we deduce that $2I_c-1=-(2I_b-1)$ which is impossible since $2I_b-1$ and $2I_c-1$ are strictly positive integers. Hence $\mathcal{B}_0$ cannot exist. 

We will prove the three equivalences of Proposition \ref{prop:three_equivalences} in what follows, separated in three propositions (Propositions \ref{prop:equivalence_A}, \ref{prop:intersection_index} and \ref{prop:equivalence_azimuths}).

\begin{figure}[h]
\centering
\input{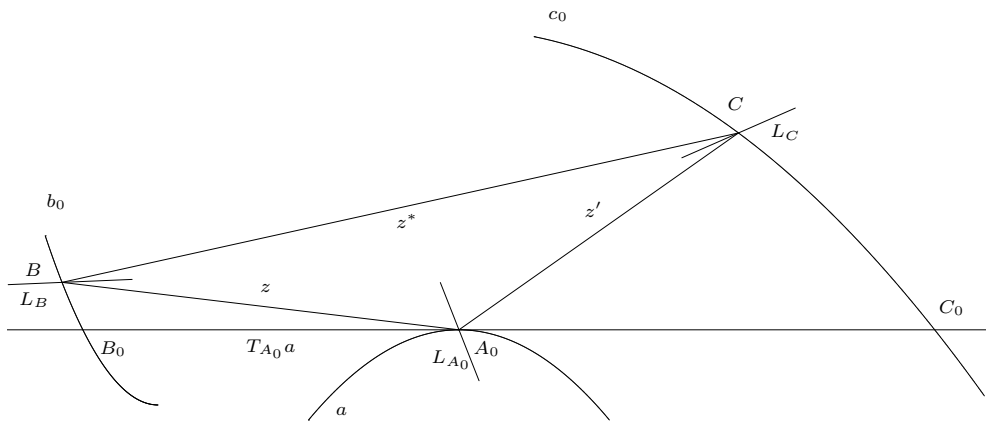}
\caption{The local projective billiard $\mathcal{B}_0$ with an orbit $(m_{A_0},m_B,m_C)$.}
\label{fig9}
\end{figure}

\begin{proposition}[$z'\sim (-z)$]
\label{prop:equivalence_A}
When $m_B=(B,L_B)\in\beta'$ goes to $m_{B_0}$, we have
$$z'\sim (-z).$$
\end{proposition}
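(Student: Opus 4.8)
The plan is to reduce the statement to a one-line computation with the projective reflection law at $A_0$, after feeding in the normalization of coordinates. Since the orbit $(m_{A_0},m_B,m_C)$ satisfies the reflection law at $A_0$, the lines $A_0B$ and $A_0C$ are symmetric with respect to the pair $(L_{A_0},T_{A_0}a)$. By Proposition \ref{prop:symmetry} and Equation \eqref{eq:form_symmetry}, under the identification of lines through $A_0$ with their azimuths, this symmetry is the conformal involution
\[
w\mapsto\frac{(\ell+t)w-2\ell t}{2w-(\ell+t)},
\]
where $\ell=\az(L_{A_0})$ and $t=\az(T_{A_0}a)$. I would then exploit that the affine chart was chosen so that $\az(T_{A_0}a)=0$, hence $t=0$ and the involution collapses to $w\mapsto \ell w/(2w-\ell)$. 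Applying it to $z=\az(A_0B)$ gives the \emph{exact} identity $z'=\ell z/(2z-\ell)$. Here $\ell$ is finite, by the choice $\infty\notin\{\az(L_{A_0}),\ldots\}$, and nonzero, since $L_{A_0}\neq T_{A_0}a$ while $\az(T_{A_0}a)=0$; thus the denominator $2z-\ell$ stays bounded away from $0$ in the limit.

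The second ingredient is to establish that $z\to 0$ as $m_B\to m_{B_0}$. This is exactly where the flat-orbit structure provided by Proposition \ref{prop:existence_flat_orbit} enters: its property 1 asserts that the vertices $A_0$, $B_0$, $C_0$ all lie on the line $T_{A_0}a$. When $A_0\neq B_0$, the line $A_0B$ tends to $A_0B_0=T_{A_0}a$ as $B\to B_0$, so $z=\az(A_0B)\to\az(T_{A_0}a)=0$. In the degenerate case $A_0=B_0$ I would instead invoke property 2 of the same proposition: the germs $(a,A_0)$ and $(b_0,B_0)$ coincide, so $A_0B$ is a chord of this common germ issued from $A_0$, which converges to its tangent line $T_{A_0}a$ as $B\to A_0$; again $z\to 0$.

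Granting these two facts, the conclusion is immediate: from $z'=\ell z/(2z-\ell)$ we obtain
\[
\frac{z'}{z}=\frac{\ell}{2z-\ell}\xrightarrow[\,z\to 0\,]{}\frac{\ell}{-\ell}=-1,
\]
that is $z'\sim (-z)$ as $m_B\to m_{B_0}$. I do not expect a serious obstacle in this particular proposition, since once the normalization $t=0$ is in place the whole statement is a direct asymptotic reading of the explicit symmetry formula. The only points demanding a little care are the separate treatment of the coincident-vertex case $A_0=B_0$ when proving $z\to 0$, and the bookkeeping verification that $\ell=\az(L_{A_0})$ is simultaneously finite and nonzero, which is guaranteed by the coordinate choices made at the start of the section.
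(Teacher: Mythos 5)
Your proposal is correct and follows essentially the same route as the paper's proof: apply Equation \eqref{eq:form_symmetry} at $m_{A_0}$, use the normalization $\az(T_{A_0}a)=0$ to get $z'=\ell z/(2z-\ell)$, and pass to the limit $z\to 0$. The only difference is that you spell out what the paper compresses into ``by choice of coordinates'' --- namely that $\ell$ is finite and nonzero and that $z\to 0$ (including the coincident-vertex case via property 2 of Proposition \ref{prop:existence_flat_orbit}) --- which is a faithful filling-in of the same argument rather than a different one.
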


\begin{proof}
Equation \eqref{eq:form_symmetry} of Proposition \ref{prop:symmetry} implies that 
$$z'=\frac{(\ell+t)z-2\ell t}{2z-(\ell+t)}$$
where $t = \az(T_{A_0}a)$, $\ell=\az(L_{A_0})$. By choice of coordinates, when $m_B\to m_{B_0}$,
$$z'=\frac{\ell z}{2z-\ell}\sim\frac{\ell z}{-\ell}=-z.$$
\end{proof}

\begin{proposition}
\label{prop:intersection_index}
Suppose $a$ is not supported by a line. If $A_0=B_0$ then when $m_B=(B,L_B)\in\beta'$ goes to $m_{B_0}$, we have
$$z\etoile\to (2I-1)z$$
where $I\geq2$ is the index of intersection of $a$ with the tangent line $T_{A_0}a$ at $A_0$.
\end{proposition}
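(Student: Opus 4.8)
The plan is to reduce the statement to a local computation in a well-chosen affine chart and then apply the Möbius form of the reflection law from Proposition \ref{prop:symmetry}. First I would invoke the hypothesis $A_0=B_0$ together with property 2 of Proposition \ref{prop:existence_flat_orbit} (equivalently, case 1 of Proposition \ref{prop:first_step_flat_orbit}): since the germs $(a,A_0)$ and $(b_0,B_0)$ coincide, the point $B$ moves along $a$ as $m_B\to m_{B_0}$, and $T_Bb_0=T_Ba$. In the coordinates fixed at the start of the subsection we have $\az(T_{A_0}a)=0$, so I would arrange $(x,y)$ with $A_0$ at the origin, the tangent $T_{A_0}a$ equal to the $x$-axis, and $\az$ identified with the affine slope. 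Then near $A_0$ the curve $a$ is a graph $y=f(x)$ with $f(0)=f'(0)=0$; because $a$ is not supported by a line, $f\not\equiv0$, so $I:=\mathrm{ord}_0 f$ is a finite integer, equal to the intersection index of $a$ with $T_{A_0}a$ at $A_0$, and $I\geq2$ by tangency. Write $f(x)=c\,x^{I}+o(x^{I})$ with $c\neq0$.

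Next I would read off the two relevant azimuths at $B=(x,f(x))$. The line $A_0B$ has slope $f(x)/x$, so $z=\az(A_0B)\sim c\,x^{I-1}$, while the tangent $T_Ba$ has slope $f'(x)$, so $t_B:=\az(T_Bb_0)=f'(x)\sim I\,c\,x^{I-1}$. The crucial comparison is therefore $t_B/z\to I$ with $z\to0$. Finally, since $L_{B_0}$ is transversal to $T_{B_0}b_0$ by condition 2) of the definition of a line-framed curve, its azimuth satisfies $\ell_{B_0}:=\az(L_{B_0})\neq\az(T_{A_0}a)=0$; hence $\ell_B:=\az(L_B)$ stays bounded away from $0$ as $m_B\to m_{B_0}$.

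Then I would feed these into the reflection law. By Proposition \ref{prop:symmetry}, the edges $A_0B$ and $BC$ being symmetric with respect to $(L_B,T_Bb_0)$ means
$$ z\etoile=\frac{(\ell_B+t_B)z-2\ell_Bt_B}{2z-(\ell_B+t_B)}, \qquad\text{so}\qquad \frac{z\etoile}{z}=\frac{(\ell_B+t_B)-2\ell_B(t_B/z)}{2z-(\ell_B+t_B)}. $$
Letting $m_B\to m_{B_0}$ and using $t_B\to0$, $z\to0$, $t_B/z\to I$, the numerator tends to $\ell_{B_0}(1-2I)$ and the denominator to $-\ell_{B_0}$, giving $z\etoile/z\to 2I-1$, i.e. $z\etoile\sim(2I-1)z$. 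To cover the degenerate case $\ell_{B_0}=\infty$ I would first divide numerator and denominator by $\ell_B$, after which the same limits give $\tfrac{1-2I}{-1}=2I-1$; only boundedness of $\ell_B$ away from $0$ is actually used.

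The main obstacle — really the only delicate point — is the order bookkeeping that makes $t_B$ and $z$ of the same order $x^{I-1}$ while $\ell_B$ remains of order $1$; this balance is exactly what produces the factor $2I-1$ rather than a trivial limit, and it is where the intersection index $I$ enters. I would also take care that the identification of $\az$ with the slope is legitimate in the chosen chart and that the orientation conventions give $z=\az(BA_0)=\az(A_0B)$, so that the symmetry genuinely sends $z$ to $z\etoile$.
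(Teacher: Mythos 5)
Your proof is correct and follows essentially the same route as the paper's: both apply the reflection formula of Proposition \ref{prop:symmetry} at $m_B$ and reduce everything to the key asymptotic $t\sim Iz$, from which the limit $(2I-1)$ drops out of the Möbius expression. The only difference is that you spell out the local graph computation $z\sim c\,x^{I-1}$, $t\sim Ic\,x^{I-1}$ (and the boundedness of $\ell_B$ away from $0$), which the paper compresses into the phrase ``we can compute that $t\sim Iz$''.
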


\begin{proof}
Suppose $A_0=B_0$ : in this case $(b_0,B_0)=(a,A_0)$ by Proposition \ref{prop:existence_flat_orbit}. Take an orbit of the form $(m_{A_0},m_B,m_C)$ with $m_B$ and $m_C$ close to $m_{B_0}$ and $m_{C_0}$. Write $t = \az(T_Bb_0)$, $\ell=\az(L_B)$. Equation \eqref{eq:form_symmetry} of Proposition \ref{prop:symmetry} implies that 
$$\frac{z\etoile}{z}=\frac{(\ell+t)z-2\ell t}{z(2z-(\ell+t))}.$$
Now, when $m_B\to m_{B_0}$, since $a$ and $b_0$ are the same in a neighborhood of $B_0$, we can compute that $t\sim Iz$. Thus
$$\frac{z\etoile}{z}\sim \frac{(1-2I)\ell z}{-\ell z}=2I-1.$$
\end{proof}

\begin{lemma}
\label{lemma:distinct_points1}
Suppose $a$ is not supported by a line. If $B_0=C_0$, then the germs $(a,A_0)$, $(b_0,B_0)$, $(c_0,C_0)$ coincide.
\end{lemma}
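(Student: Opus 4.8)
The plan is to reduce everything to the case $A_0=B_0=C_0$, where the conclusion is immediate, and to exclude the remaining possibility $A_0\neq B_0=C_0$ by a limiting argument on the reflection law at the vertex $B$. First I would dispose of the easy case: if $A_0=B_0=C_0$, then property 2) of Proposition \ref{prop:existence_flat_orbit} gives $(a,A_0)=(b_0,B_0)$ (from $A_0=B_0$) and $(b_0,B_0)=(c_0,C_0)$ (from $B_0=C_0$), so all three germs coincide by transitivity. Since coincidence of germs requires equal base points, the statement of the lemma in fact forces $A_0=B_0=C_0$; hence the whole content is to rule out $A_0\neq B_0=C_0$.

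So assume for contradiction that $B_0=C_0$ but $A_0\neq B_0$. By property 2) the germs $(b_0,B_0)$ and $(c_0,C_0)$ already coincide, so $b_0$ and $c_0$ share a single regular analytic arc through $B_0=C_0$, with common tangent $T:=T_{B_0}b_0=T_{C_0}c_0$. Since $A_0$ and $B_0$ are distinct points both lying on $T_{A_0}a$ (property 1)), the line $A_0B_0$ equals $T_{A_0}a$. Because $\mathcal{B}_0$ owns the flat orbit $(m_{A_0},m_{B_0},m_{C_0})$, there is a sequence of genuine triangular orbits $(m_{A_0},m_B^n,m_C^n)$ converging to it, with $B^n,C^n\to B_0$ and $B^n\neq C^n$.

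Next I would pass to the limit in the reflection at $B$. At each $B^n$ the lines $B^nA_0$ and $B^nC^n$ are symmetric with respect to $(L_{B^n},T_{B^n}b_0)$. As $n\to\infty$ one has $B^nA_0\to B_0A_0=T_{A_0}a$, while $B^nC^n\to T$, since both $B^n$ and $C^n$ lie on the common regular arc and a chord of a regular analytic curve tends to its tangent as its endpoints merge. By Proposition \ref{prop:symmetry} the symmetry with respect to $(L_{B_0},T)$ is a nontrivial involution $\phi$ of the pencil of lines through $B_0$ (nontrivial because $L_{B_0}\neq T$ for a line-framed curve), whose only fixed points are $L_{B_0}$ and $T$; moreover $\phi$ depends analytically on $B$, so taking limits gives $\phi(T_{A_0}a)=T$. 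Applying $\phi$ once more and using $\phi\circ\phi=\mathrm{id}$ together with $\phi(T)=T$ yields $T_{A_0}a=T=T_{B_0}b_0$. Equivalently, in the coordinate form of Equation \eqref{eq:form_symmetry}, the limiting identity factors as $(\tau-\lambda)(\sigma-\tau)=0$, where $\sigma=\az(T_{A_0}a)$, $\tau=\az(T)$, $\lambda=\az(L_{B_0})$; since $\lambda\neq\tau$ one again reads off $\sigma=\tau$, i.e. $T_{A_0}a=T$.

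This contradicts property 3) of Proposition \ref{prop:existence_flat_orbit}: as $A_0\neq B_0$, the line $T_{A_0}a$ meets $b_0$ transversally at $B_0$, so $T_{A_0}a\neq T_{B_0}b_0=T$. Hence $A_0\neq B_0=C_0$ is impossible, leaving $A_0=B_0=C_0$ and, as above, the coincidence of all three germs. I expect the main obstacle to be the clean passage to the limit: one must justify that the chord $B^nC^n$ converges to the common tangent $T$ (this is exactly where the hypothesis $B_0=C_0$, via property 2), enters, since it puts $B^n$ and $C^n$ on one regular arc) and that the symmetry operation is sufficiently continuous for $\phi(T_{A_0}a)=\lim_n B^nC^n$ to hold; both reduce to the nondegeneracy $L_{B_0}\neq T$ guaranteed by the line-framed structure.
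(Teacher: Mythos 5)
Your proof is correct, and it shares the paper's skeleton: reduce to excluding $A_0\neq B_0=C_0$, take the sequence of genuine orbits degenerating to the flat orbit, and observe that $A_0B^n\to T_{A_0}a$ while the chord $B^nC^n$ tends to the common tangent $T=T_{B_0}b_0$, precisely because property 2) of Proposition \ref{prop:existence_flat_orbit} places $B^n$ and $C^n$ on one irreducible germ $b_0=c_0$. Where you genuinely diverge is the endgame. The paper invokes the pair-swap symmetry of the cross-ratio (Remark \ref{remark:symmetric_cross_ratio}) to solve for the field line: $L_B$ is the reflection of $T_Bb_0$ in the pair $(A_0B,BC)$, i.e. $\ell=\frac{(z+z^{\ast})t-2zz^{\ast}}{2t-(z+z^{\ast})}$; in the limit the denominator stays away from zero because property 3) forces $t_0=\az(T_{B_0}b_0)\notin\{0,\infty\}$, and one gets $L_{B_0}=T_{B_0}b_0$, contradicting the transversality built into the definition of a line-framed curve. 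You instead keep $(L_B,T_Bb_0)$ as the mirror pair, use that the limiting symmetry $\phi$ is a nontrivial involution fixing $T$ (this is where the line-framed condition $L_{B_0}\neq T$ enters, to guarantee nondegeneracy and continuity of the limit), and conclude $T_{A_0}a=T$, contradicting the transversality of property 3). The two arguments are dual, and your factored identity $(\tau-\lambda)(\sigma-\tau)=0$ makes this explicit: the limit of the reflection law forces one of the two degeneracies $L_{B_0}=T_{B_0}b_0$ or $T_{A_0}a=T_{B_0}b_0$, and each proof uses one of the available nondegeneracy hypotheses to rule out one factor and land the contradiction on the other. Both routes are sound; yours has the merit of isolating exactly which hypothesis plays which role, while the paper's is a more compact one-line computation.
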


\begin{proof}
Suppose that the three germs do not coincide and that $B_0=C_0$: in this case $(c_0,C_0)=(b_0,B_0)$ but $A_0\neq B_0$ with $T_{A_0}a$ intersecting $b_0$ transversally at $B_0$ by Proposition \ref{prop:existence_flat_orbit}. Take an orbit of the form $(m_{A_0},m_B,m_C)$ with $m_B$ and $m_C$ close to $m_{B_0}$ and $m_{C_0}$. Then, write $t = \az(T_Bb_0)$ and $\ell=\az(L_B)$. Equation \eqref{eq:form_symmetry} of Proposition \ref{prop:symmetry} implies that 
$$\ell=\frac{(z+z^{\ast})t-2zz^{\ast}}{2t-(z+z^{\ast})}$$
(indeed, $L_B$ and $T_Bb_0$ are symmetric with respect to $(A_0B,BC)$ by symmetry of the cross-ratio as noted in Remark \ref{remark:symmetric_cross_ratio}). Now, when $m_B\to m_{B_0}$, we have
$$z\to 0,\qquad t\to t_0$$
where $t_0:=\az(T_{B_0}b_0)\notin\{0,\infty\}$ by transversality of $b_0$ with $T_{A_0}a$ at $B_0$ and by choice of coordinates. But we also have
$$z^{\ast}\to t_0$$
because $BC\to T_{B_0}b_0$ since $B$,$C$ are distinct points (by definition of an orbit) of the same irreducible germ of curve $b_0=c_0$ converging to the same point $B_0=C_0$. Hence, when $m_B\to m_{B_0}$,
$$L\to \frac{t_0^2}{t_0}=t_0$$
which means that $L_{B_0}=T_{B_0}b_0$. But this is not the case by Proposition \ref{prop:existence_flat_orbit}, contradiction.
\end{proof}

\begin{proposition}
\label{prop:equivalence_azimuths}
Suppose $a$ is not supported by a line and $B_0\neq A_0$. Then when $m_B=(B,L_B)\in\beta$ goes to $m_{B_0}$, we have
$$z\etoile\sim z$$
which allows to extend the formula of Proposition \ref{prop:intersection_index} by setting $I=1$ in this case (transverse intersection).
\end{proposition}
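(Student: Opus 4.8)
The plan is to read off the reflection at $B$ in the coordinates already fixed, where $\az(T_{A_0}a)=0$, and to expand every azimuth to first order along the family of orbits. First I would choose a local analytic parameter $s$ on $\beta_0$ with $s=0$ at $m_{B_0}$, so that along the associated orbits $(m_{A_0},m_B,m_C)$ the quantities $z=\az(A_0B)$, $z\etoile=\az(BC)$, together with the framing data $\ell(s)=\az(L_B)$ and $t(s)=\az(T_Bb_0)$, are analytic in $s$. Since $A_0,B_0$ both lie on $T_{A_0}a$ we have $A_0B_0=T_{A_0}a$, whence $z(0)=0$; since the hypothesis $A_0\neq B_0$ forces $B_0\neq C_0$ (Lemma \ref{lemma:distinct_points1}), the line $B_0C_0$ is again $T_{A_0}a$, so $z\etoile(0)=0$; and transversality gives $t(0)=t_0\neq 0$, finite by the choice of coordinates (as $T_{B_0}b_0\neq T_{A_0}a$). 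Transversality also shows that $A_0B_0=T_{A_0}a$ is not tangent to $b_0$ at $B_0$, so the pencil $A_0B$ pivots with nonzero speed and $z=z_1 s+O(s^2)$ with $z_1\neq 0$.

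Next I would pin down the limiting framing and reduce everything to a single ratio. Letting $s\to 0$ in the reflection at $B$ (Proposition \ref{prop:symmetry}), the involution $\sigma_B$ fixing $\ell$ and $t$ sends $z$ to $z\etoile$; since $z,z\etoile\to 0$ while $t\to t_0\neq 0$, continuity forces $0$ to be a fixed point of the limiting involution, i.e. $\az(L_{B_0})=\ell(0)=0$, equivalently $L_{B_0}=T_{A_0}a=A_0B_0$ as lines. Thus both moving quantities $z$ and $\ell$ tend to the \emph{same} fixed point $0$ of $\sigma_B$, whereas $t$ stays away from it. Because an involution has multiplier $-1$ at each fixed point, the local form of $\sigma_B$ near the fixed point $\ell$ is $\sigma_B(w)=2\ell-w+O((w-\ell)^2)$; applying this to $w=z$ (and using \eqref{eq:form_symmetry}) gives
\[
z\etoile = 2\ell - z + O(s^2), \qquad \text{hence} \qquad \frac{z\etoile}{z} \longrightarrow 2\lim_{s\to 0}\frac{\ell}{z} - 1 .
\]
This is precisely the transverse mirror of Proposition \ref{prop:intersection_index}: there $z$ sits next to the moving fixed point $t$ and one invokes $t\sim Iz$, while here $z$ sits next to the moving fixed point $\ell$.

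It therefore remains to prove $\ell\sim z$, i.e. that the line field $L_B$ co-rotates to first order with the incoming pencil $A_0B$; this is the $I=1$ instance of the relation ``moving fixed point $\sim Iz$'' and is the heart of the matter, which I expect to be the main obstacle. Unlike the tangent case, where $t\sim Iz$ is pure curve geometry (tangent slope versus chord slope at a contact of order $I$), the relation $\ell\sim z$ genuinely involves the framing of $\beta_0$ and cannot be extracted from the reflection at $B$ alone, which only returns the tautology $\ell_1=(z_1+z\etoile_1)/2$. The plan is to obtain $\ell_1=z_1$ from the local analytic structure of the curve $\beta_0$ at $B_0$ furnished by Proposition \ref{prop:existence_flat_orbit} (where $L_{B_0}=A_0B_0$ was just recovered), transversality ($I_b=1$) ensuring that this first-order coefficient is nonzero and equal to $z_1$. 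Granting this, $z\etoile/z\to 2\cdot 1-1=1$, that is $z\etoile\sim z$, which is exactly the announced extension of Proposition \ref{prop:intersection_index} by $I=1$.
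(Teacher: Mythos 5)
Your reduction is sound up to the last step: the identities $z(0)=0$, $z\etoile(0)=0$ (via Lemma \ref{lemma:distinct_points1}), the deduction by continuity that $0$ must be a fixed point of the limiting involution so that $L_{B_0}=T_{A_0}a$, and the uniform expansion $z\etoile=2\ell-z+O(s^2)$ giving $z\etoile/z\to 2\lim(\ell/z)-1$, are all correct (and this organization is arguably cleaner than the paper's direct manipulation of formula \eqref{eq:form_symmetry}). But the step you yourself flag as ``the heart of the matter'', namely $\ell\sim z$, is left as a plan, and the plan as stated cannot work: neither the statement of Proposition \ref{prop:existence_flat_orbit} nor transversality determines the first-order coefficient $\ell_1$ of the line field along $\beta_0$. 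Knowing $L_{B_0}=T_{A_0}a$ fixes only $\ell(0)=0$; the line field $L_B$ for $B\neq B_0$ is a priori arbitrary (subject to transversality to $b_0$), and as you note the reflection at $B$ returns only the tautology $\ell_1=(z_1+z\etoile_1)/2$. Adding the reflections at $A_0$ and at $C$ does not help either: with the vertex $m_{A_0}$ \emph{fixed}, the reflection at $C$ introduces the equally unknown first-order coefficient of $L_C$, so the system of constraints along this one-parameter family of orbits is underdetermined. No local analysis of this single family can pin down $\ell_1$.

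What closes the gap in the paper is Lemma \ref{lemma:tangent_field_of_lines}, which uses the $3$-reflectivity of $\mathcal{B}_0$ for orbits based at points $m_{A_1}\in\alpha$ \emph{near} $m_{A_0}$ — that is, it lets the vertex on $\alpha$ vary, which is exactly the information your one-parameter family discards. Parametrizing $\beta_0$ near $m_{B_0}$ by the points $A$ with $B\in T_Aa$ and running the flat-orbit argument at each such $m_{B_1}$, one finds that $T_{A_1}a$ is invariant under the reflection at $m_{B_1}$ and transverse to $b_0$, hence $L_{B_1}=T_{A_1}a$: the line field along $\beta_0$ near $m_{B_0}$ is everywhere \emph{tangent to $a$} at a point $A$ converging to $A_0$. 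Once this is known, $\ell\sim z$ is immediate geometry: $L_B$ and $A_0B$ both pass through $B$, which stays bounded away from $A_0$, and through points ($A$ and $A_0$ respectively) that converge together to $A_0$, so their azimuths are asymptotically equal. Some such use of reflectivity at nearby base points on $\alpha$ (or an equivalent substitute) is indispensable; without it your argument establishes only $z\etoile/z\to 2\lim(\ell/z)-1$ with the limit undetermined.
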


\begin{proof}
First, let us prove the following lemma, which gives the form of the projective field of lines locally around $m_{B_0}$:

\begin{lemma}
\label{lemma:tangent_field_of_lines}
Suppose $a$ is not supported by a line and $B_0\neq A_0$. Then for $m_B=(B,L_B)\in\beta_0$ close to $m_{B_0}$, there is a $m_A=(A,L_A)\in\alpha$ close to $m_{A_0}$ for which $L_B$ is tangent to $a$ at $A$.
\end{lemma}

\begin{proof}
Proposition \ref{prop:existence_flat_orbit} implies that $T_{A_0}a$ intersects $b_0$ transversally at $B_0$. By the implicit function theorem, there is an analytic map which associates to any $m_{A}=(A,L_A)$ close to $m_{A_0}$ a point $m_B=(B,L_B)$ close to $m_{B_0}$ such that $B$ lies on $T_Aa$. Since $a$ is not a line, this map is not constant, hence is open and thus parametrizes (maybe non-bijectively) the germ of $\beta_0$ at $m_{B_0}$. Then we can choose $m_{A_1}$ in the neighborhood of $m_{A_0}$, and denote $m_{B_1}=(B_1,L_{B_1})$ the corresponding point on $\beta_0$ obtained via the above parametrization. 

We can suppose that $b_0$ is regular at $B_1$, that $T_{A_1}a$ is transverse to $b_0$ at $B_1$ and that $B_1\notin c_0$ (possible by Lemma \ref{lemma:distinct_points1}). Let $m_B=(B,L_B)\in\beta_0$ be a point converging to $m_{B_1}$. The line $A_1B$ converges to $T_{A_1}a$ and by the reflection law at $m_{A_1}$ we get that the line $A_1C$ also converges to $T_{A_1}a$, hence the limit $C_1$ of $C$ lies on $T_{A_1}a$. 
We also have that $C_1\neq B_1$ (Lemma \ref{lemma:distinct_points1}). 
Hence $C_1B_1=T_{A_1}a=A_1B_1$: $T_{A_1}a$ is invariant by the reflection law in $B_1$ with respect to $(L_{B_1},T_{B_1}b_0)$. Since it is transverse to $b_0$, we have $T_{A_1}a=L_{B_1}$, and this concludes the proof for $m_B=(B,L_B)\in\beta_0$ close to $m_{B_0}$.
\end{proof}

As in Lemma \ref{lemma:tangent_field_of_lines}, when $m_B$ is close to $m_{B_0}$, $L_B$ is tangent to $a$ at a point $A$ corresponding to a $m_A$ close to $m_{A_0}$: when $m_B$ converges to $m_{B_0}$, $m_A$ converges to $m_{A_0}$. Write $t = \az(T_Bb_0)$, $\ell=\az(L_B)$. We have by Formula \eqref{eq:form_symmetry} of Proposition \ref{prop:symmetry}, 
\begin{equation}
\label{eq:limit_symetry}
z\etoile = \frac{(\ell+t)z-2\ell t}{2z-(\ell+t)}.
\end{equation}
Now in this configuration, we easily compute using Lemma \ref{lemma:tangent_field_of_lines} that, when $m_B\to m_{B_0}$, 
$$\ell\sim z.$$ 
Here besides Lemma \ref{lemma:distinct_points1}, we essentially use the inequality $B_0\neq A_0$. This allows to use the following argument. As $B$ tends to $B_0$, the lines $L_B$ and $L_{B_0}=T_{A_0}a$ intersect at a point converging to $A_0$, while $B$ remains distant from $A_0$. This implies the required asymptotic equivalence of azimuths.

But we have also $t\to t_0$ where $t_0=\az(T_{B_0}b_0)\notin\{0,\infty\}$ (by the transversality condition of the intersection with $T_{A_0}a$). Hence, Equation \eqref{eq:limit_symetry} implies, when $m_B\to m_{B_0}$, that
$$\frac{z\etoile}{z} = \frac{(\ell+t)z-2t\ell}{z(2z-(\ell+t))}\sim\frac{-t_0z}{-t_0z}=1.$$
\end{proof}

\begin{proof}[Proof of Proposition \ref{prop:three_equivalences}]
The first asymptotic equivalence, $z\sim-z'$, comes from Proposition \ref{prop:equivalence_A}. The second one comes from Proposition \ref{prop:intersection_index}, when $I\geq 2$, and from Proposition \ref{prop:equivalence_azimuths}, when $I=1$. Finally the third one can be deduced from the second one by interchanging $m_B$ and $m_C$.
\end{proof}

\subsection{Conclusion}

Let $\mathcal{B} = (\alpha,\beta,\gamma)$ be a $3$-reflective complex-analytic local projective billiard, whose classical borders  are $a$, $b$, $c$. Supppose $a$ is not supported by a line. Let $\mathcal{B}_0=(\alpha,\beta_0,\gamma_0)$ be the $3$-reflective local projective billiard of Proposition \ref{prop:existence_flat_orbit}, and $a,b_0,c_0$ be the classical borders of $\alpha,\beta_0,\gamma_0$ respectively. 

Now by Proposition \ref{prop:three_equivalences}, we deduce that $2I_c-1=-(2I_b-1)$ which is impossible since $2I_b-1$ and $2I_c-1$ are strictly positive integers. Hence $\mathcal{B}_0$ cannot exist, contradiction: $a$ is supported by a line. By symmetry of previous argument, $a$, $b$, an $c$ are supported by lines, which proves Proposition \ref{prop:classical_borders_lines}. 

Finally, by Proposition \ref{prop:reflectivity_and_lines1}, $\mathcal{B}$ is a right-spherical billiard as defined in Definition \ref{def:spherical_billiard}. Hence Theorem \ref{thm:main_theorem_analytic} is proved.

\section{Analytic case: proof of Theorem \ref{thm:main_theorem_multidim_analytic}}
\label{sec:multidim}

In this section, we want to prove Theorem \ref{thm:main_theorem_multidim_analytic} using Theorem \ref{thm:main_theorem_analytic}. Fix $d\geq 3$ to be the dimension in which the problem takes place. We will need this auxiliary lemma:

\begin{lemma}
\label{lemma:lines_and_tangent}
Let $W\subset\cmplx^d$ be an analytic hypersurface, $p\in W$ and $U$ a non-empty open subset of $\proj(T_pW)$. Suppose that for any $\proj(v)\in U$, $W$ contains the points $p+tv$ for all $t$ sufficiently close to $0$ (dependenlty on $v$). Then $W$ is an hyperplane.
\end{lemma}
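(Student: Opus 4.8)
The plan is to reduce the statement to a purely local computation and then invoke the identity theorem for holomorphic functions. Since $\proj(T_pW)$ is assumed to make sense, $p$ is a smooth point of $W$, so near $p$ the hypersurface is a graph. I would choose affine coordinates $(x,y)=(x_1,\dots,x_{d-1},y)$ on $\cmplx^d$ centered at $p$ so that $T_pW=\{y=0\}$; then there is an analytic function $f$ defined near $0\in\cmplx^{d-1}$ with $f(0)=0$, $df(0)=0$, such that $W=\{y=f(x)\}$ locally. A tangent direction $v\in T_pW$ is of the form $v=(v',0)$, and the point $p+tv=(tv',0)$ lies on $W$ exactly when $f(tv')=0$. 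Thus the hypothesis becomes: for every $v'$ whose class lies in the open set $U\subset\proj(\cmplx^{d-1})$, one has $f(tv')=0$ for all $t$ in some disc $|t|<\varepsilon_{v'}$ around $0$.

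The main obstacle is that the radius $\varepsilon_{v'}$ of validity depends a priori on the direction $v'$, so one cannot immediately conclude that $f$ vanishes on an open set. I would remove this difficulty slicewise. Fix a class $\proj(v_0')\in U$ and, after reordering coordinates, assume $v_0'$ has nonzero first entry, so that the directions near $v_0'$ admit a holomorphic parametrization $v'(\lambda)=(1,\lambda_1,\dots,\lambda_{d-2})$ with $\lambda$ in an open subset of $\cmplx^{d-2}$. Setting $g(t,\lambda)=f\!\left(t\,v'(\lambda)\right)$, which is holomorphic near $(0,\lambda_0)$, the hypothesis says that for each fixed $\lambda$ the one-variable function $t\mapsto g(t,\lambda)$ is holomorphic and vanishes on the disc $|t|<\varepsilon_{v'(\lambda)}$. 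By the one-variable identity theorem it then vanishes identically in $t$, and since this holds for every $\lambda$, we get $g\equiv0$ on the whole neighborhood.

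Next I would push this vanishing forward. The map $\Psi(t,\lambda)=t\,v'(\lambda)=(t,t\lambda_1,\dots,t\lambda_{d-2})$ has full rank $d-1$ at every point with $t\neq0$: the partial derivatives $\partial_{\lambda_i}\Psi=t\,e_{i+1}$ together with $\partial_t\Psi=(1,\lambda)$ are linearly independent there. Hence $\Psi$ is a local biholomorphism onto an open subset of $\cmplx^{d-1}$. Since $f\circ\Psi=g\equiv0$, the analytic function $f$ vanishes on a non-empty open subset of $\cmplx^{d-1}$, and therefore $f\equiv0$ on its connected domain by the identity theorem. Consequently $W$ coincides near $p$ with the hyperplane $\{y=0\}=T_pW$.

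Finally I would upgrade the local conclusion to a global one. Let $H:=T_pW$ be the corresponding affine hyperplane and let $\ell$ be an affine equation for $H$; restricted to $W$ it is a holomorphic function vanishing on the non-empty open piece of $W$ found above. By irreducibility (connectedness) of the hypersurface this forces $\ell|_W\equiv0$, i.e. $W\subseteq H$; as both are $(d-1)$-dimensional, $W$ is an open subset of the hyperplane $H$, which is the desired conclusion. The only delicate point beyond the slicewise identity-theorem trick is this last passage, which relies on $W$ being irreducible — harmless in the intended application where $W$ is a connected analytic boundary.
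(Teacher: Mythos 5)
Your proof is correct and follows essentially the same route as the paper's: reduce to a graph $\{y=f(x)\}$ over $T_pW$, apply the one-variable identity theorem along each complex line $t\mapsto tv$ with $\proj(v)\in U$, note that the union of these lines fills a non-empty open subset of the chart, and conclude $f\equiv 0$ by the identity theorem. Your additional details (the holomorphic parametrization of directions, the rank computation for $\Psi$, and the final connectedness/irreducibility step passing from the local to the global statement) are refinements of points the paper leaves implicit, but the underlying argument is identical.
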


\begin{proof}
We can suppose that $p=0$, $T_pW={z_d=0}$ and $W$ is locally the graph of an analytic map $f:V\to\cmplx$ where $V\subset\cmplx^{d-1}$ is an open subset containing $0$. Let $v\in\cmplx^{d-1}$ be a non-zero vector such that $\proj(v)\in U$. By assumption, for $t$ close to $0$ we have $g_v(t):=f(tv)=0$. Since $g_v$ is analytic, it is $0$ everywhere where it is defined. Yet the set $\{tv\sep t\in\real, \proj(v)\in U\}$ contains a non-empty open subset of $V$, on which $f$ should vanish. By analyticity $f=0$ and $W$ is the hyperplane defined by the equation $z_d=0$.
\end{proof}

\begin{proof}[Proof of Theorem \ref{thm:main_theorem_multidim_analytic}]
Suppose by contradiction that there is a $3$-reflective complex-analytic local projective billiard $\mathcal{B}=(\alpha,\beta,\gamma)$ in $\proj(T\cmplx^d)$. Let $a=\pi(\alpha)$, $b=\pi(\beta)$ and $c=\pi(\gamma)$. Let $U\times V\subset \alpha\times\beta$ be an open subset for which all $(m_{A},m_{B})\in U\times V$ can be completed in $3$-periodic orbits of $\mathcal{B}$. Let us state the following lemma, which isn't worth of a proof:

\begin{lemma}
\label{lemma:planar_orbits}
Let $(m_A,m_B,m_C)$ be a $3$-periodic orbit of $\mathcal{B}$ where $m_A=(A,L_A)$, $m_B=(B,L_B)$, $m_C=(C,L_C)$. Then all lines $AB$, $BC$, $CA$, $L_A$, $L_B$, $L_C$ belong to the plane $ABC$, which is transverse to $a,b,c$ at $A,B,C$ respectively.
\end{lemma}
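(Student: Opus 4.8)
The plan is to read planarity straight off the higher-dimensional reflection law and to read transversality off the non-tangency that is built into the definition of an orbit; the whole argument is elementary linear algebra once the right clause of the symmetry definition is isolated. First I would record that the three vertices of a triangular orbit are pairwise distinct and non-collinear, so that $P:=ABC$ is a genuine affine $2$-plane. Non-collinearity is what makes the conclusion meaningful, and it is forced by the reflection law: if $A,B,C$ lay on a single line $\ell$, then at each vertex the two incident edges would coincide with $\ell$, and $(\ell,\ell)$ can be symmetric with respect to $(L,T)$ only in the degenerate cases $\ell=T$ or $\ell=L$ (Remark \ref{rem:degenerate_reflection}); since the edges are not tangent to the boundary at the vertex, $\ell\neq T$, leaving $\ell=L$ at every vertex, i.e. the degenerate ``flat'' configuration that is excluded from triangular orbits. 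Granting non-collinearity, the three edges $AB$, $BC$, $CA$ lie in $P$ by construction.

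Next I would show $L_A,L_B,L_C\subset P$. The crucial input is the coplanarity clause in the definition of symmetry in $T_{\bullet}\cmplx^d$ recalled in the introduction: two lines $\ell,\ell^{*}$ can be symmetric with respect to $(L,T)$ only if $\ell$, $\ell^{*}$ and $L$ all lie in one common $2$-subspace $\Pi\subset T_{\bullet}\cmplx^d$ (the harmonicity of Proposition \ref{prop:symmetry} is then imposed inside $\Pi$). Applying this at $B$, where $BA$ and $BC$ are symmetric with respect to $(L_B,T_Bb)$, the three lines $BA,BC,L_B$ lie in a single $2$-plane $\Pi_B$. But $BA$ and $BC$ are distinct lines through $B$ (non-collinearity) and already span the tangent plane $T_BP$, so $\Pi_B=T_BP$ and hence $L_B\subset T_BP$; translating back, the line $L_B$ through $B$ is contained in $P$. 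The same argument at $A$ and at $C$ yields $L_A,L_C\subset P$, so all six lines $AB,BC,CA,L_A,L_B,L_C$ lie in $P$.

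Finally I would establish transversality. Since $T_Aa$ is a hyperplane of $T_A\cmplx^d$, the plane $P$ is transverse to $a$ at $A$ as soon as $T_AP\not\subset T_Aa$. The direction of the edge $AB$ belongs to $T_AP$, and by the definition of an orbit $AB$ is not tangent to $a$ at $A$, so this direction is not in $T_Aa$; therefore $T_AP+T_Aa=T_A\cmplx^d$. The identical reasoning at $B$ (using $BA$ and $T_Bb$) and at $C$ (using $CA$ and $T_Cc$) gives transversality of $P$ with $b$ and $c$. The one point requiring a little care is the non-degeneracy discussed above, namely extracting non-collinearity and the precise ``common $2$-plane'' reading of the symmetry condition; everything after that is immediate, which is why the statement can be asserted with essentially no computation.
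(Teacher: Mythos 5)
The paper gives no argument at all for Lemma \ref{lemma:planar_orbits} (it is introduced with the words ``which isn't worth of a proof''), so your write-up supplies the intended folklore reasoning, and its two substantive parts are correct. The coplanarity clause of the higher-dimensional reflection law forces $L_B$ into the unique $2$-plane spanned by the distinct lines $BA$, $BC$, which is $P=ABC$, and likewise at $A$ and $C$; and transversality of $P$ to $a$ at $A$ is immediate because the direction of $AB$ lies in $T_AP$ but, by the non-tangency built into the definition of an orbit, not in the hyperplane $T_Aa$. This is surely what the authors considered too obvious to write.

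The one step that does not hold up is your non-collinearity argument. First, the citation is off: Remark \ref{rem:degenerate_reflection} treats the degeneracy $L=T$ of the \emph{reference} pair, whereas the case you need ($\ell=\ell'$ for the reflected pair) is handled by the degenerate alternatives ``$\ell=\ell'=L$ or $\ell=\ell'=T$'' in the definition of symmetry in Section \ref{sec:reflection_law}. Second, and more importantly, that very clause shows that collinear configurations are \emph{not} excluded by the definition of a $3$-periodic orbit: if $A,B,C$ lie on a line $\ell$ and $L_A=L_B=L_C=\ell$, then every condition of the orbit definition is satisfied (consider three parallel hyperplanes, each framed by the constant line field in a fixed transverse direction; any line in that direction meets them in exactly such a triple). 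The paper itself is aware of this, which is why non-collinearity appears as an \emph{extra} hypothesis whenever it is needed, e.g. the condition ``$A,B,C$ do not lie on the same line'' in the definition of the phase space $M_{\alpha}^{0}$ in Subsection \ref{subsec:birk_distrib_1}, and why collinear limits are studied separately as ``flat orbits'' in Subsection \ref{subsec:existence_integral}. So the flat configuration cannot be ruled out by the reflection law; the correct reading is that non-collinearity is an implicit hypothesis of Lemma \ref{lemma:planar_orbits} (it is forced by the statement itself, since otherwise ``the plane $ABC$'' is undefined), and it holds in every application made of the lemma. With that adjustment, the rest of your argument goes through verbatim.
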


First let us show the

\begin{lemma}
\label{lemma:hypersurf_planes}
The hypersurfaces $a$ and $b$ are supported by hyperplanes.
\end{lemma}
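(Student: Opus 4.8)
The plan is to produce, through a generic point $A_0\in a$, an open cone of tangent directions along which $a$ contains honest line segments, and then invoke Lemma \ref{lemma:lines_and_tangent} to conclude that $a$ is contained in a hyperplane; the same argument applied to $b$ finishes the statement. The source of these segments will be the planar slices furnished by Lemma \ref{lemma:planar_orbits}. Recall that by that lemma every $3$-periodic orbit $(m_A,m_B,m_C)$ lives in the $2$-plane $\Pi=ABC$, which is transverse to $a,b,c$ and contains $L_A,L_B,L_C$; restricting to $\Pi$ one gets a genuine planar projective billiard orbit whose reflection at $A$ uses the induced tangent line $T(A):=\Pi\cap T_Aa$ and the induced field line $L_A\subset\Pi$.

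First I would fix a reflection point $m_{A_0}=(A_0,L_{A_0})\in\alpha$ coming from an interior orbit and consider all orbits through $m_{A_0}$. Since each such orbit plane contains the fixed line $L_{A_0}$, these planes range over the pencil $\mathcal{P}_{A_0}\cong\proj(T_{A_0}\cc^d/L_{A_0})\cong\cp^{d-2}$ of $2$-planes through $L_{A_0}$, and for $\Pi$ in an open subset of $\mathcal{P}_{A_0}$ there is a one–parameter family of orbits $A_0\to B\to C\to A_0$ lying in $\Pi$, obtained by letting the second vertex $B$ run over the planar curve $b\cap\Pi$. The crucial step is then to show that, for such $\Pi$, the slice $a\cap\Pi$ is a \emph{line} near $A_0$, necessarily the line through $A_0$ in the direction $T(A_0)=\Pi\cap T_{A_0}a$ tangent to $a$. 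Granting this, I would vary $\Pi$ over the pencil: the map $\Pi\mapsto T(A_0)=\Pi\cap T_{A_0}a$ is the projection from $L_{A_0}$ and hence sweeps an open subset of $\proj(T_{A_0}a)$; each resulting line $a\cap\Pi$ through $A_0$ lies in $a$ and is tangent to it, so Lemma \ref{lemma:lines_and_tangent} applies and shows $a$ is an open piece of a hyperplane. By exchanging the roles of the three curves, the same works for $b$.

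The main obstacle is precisely the planar step, i.e. deducing line-ness of $a\cap\Pi$ (and of $b\cap\Pi$, $c\cap\Pi$) from the one–parameter family of triangular orbits sharing the common vertex $A_0$. This is where I would bring in the planar classification: one would like to recognize the slice as (a piece of) a right-spherical billiard and invoke Theorem \ref{thm:main_theorem_analytic} together with Proposition \ref{prop:reflectivity_and_lines1}. The difficulty is that the slice a priori carries only a \emph{one}-dimensional family of orbits (all based at $A_0$), whereas $3$-reflectivity in the plane asks for a two-dimensional family, so Theorem \ref{thm:main_theorem_analytic} cannot be applied verbatim to the slice. The heart of the proof is therefore to promote this ``pencil of orbits through a fixed vertex'' into flatness of the boundary slice — either by showing that the induced field $L$ in fact lies in $\Pi$ along the whole curve $a\cap\Pi$ (so that the induced planar billiard is genuinely $3$-reflective), or by a direct asymptotic/azimuth computation in the spirit of Section \ref{sec:reflection_law}, tracking $\az(A_0B)$ and $\az(A_0C)$ along the family and forcing the curvatures of $a\cap\Pi$, $b\cap\Pi$, $c\cap\Pi$ at the orbit vertices to vanish. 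Establishing this reduction rigorously is the step I expect to require the most care.
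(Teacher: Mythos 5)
Your overall skeleton — slicing $a$ by the orbit planes through a fixed reflection point, showing each planar slice is a line, and concluding with Lemma \ref{lemma:lines_and_tangent} — is exactly the paper's strategy. But the proposal has a genuine gap precisely where you flag it: you assert that the slice carries only a \emph{one}-parameter family of orbits (those with fixed vertex $A_0$), conclude that Theorem \ref{thm:main_theorem_analytic} cannot be applied verbatim, and leave the ``promotion to flatness'' step open, suggesting it may require a new azimuth-type computation. This is where the proof actually lives, and the missing point is that the slice in fact carries a full \emph{two}-dimensional family of orbits; this follows from Lemma \ref{lemma:planar_orbits} by a short bootstrap, with no asymptotic analysis needed.

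Concretely: fix an orbit $(m_A,m_B,m_C)$ of $\mathcal{B}$ with orbit plane $\Pi$. First, for $m_{B'}\in\beta$ with $B'\in b\cap\Pi$ near $B$, the orbit completing $(m_A,m_{B'})$ has its plane containing the two intersecting lines $L_A$ and $AB'$, both of which lie in $\Pi$; hence that plane \emph{is} $\Pi$, and by Lemma \ref{lemma:planar_orbits} the framing line $L_{B'}$ lies in $\Pi$. Symmetrically, for $m_{A'}\in\alpha$ with $A'\in a\cap\Pi$, the orbit completing $(m_{A'},m_B)$ has plane containing $L_B$ and $A'B$, both in $\Pi$, so $L_{A'}\subset\Pi$ as well. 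Now for an \emph{arbitrary} pair $(A',B')\in(a\cap\Pi)\times(b\cap\Pi)$ near $(A,B)$, the completed orbit has plane containing $L_{A'}$ and $A'B'$, two intersecting lines inside $\Pi$, hence equal to $\Pi$. Thus the curves $a\cap\Pi$, $b\cap\Pi$, $c\cap\Pi$, equipped with the induced framing lines (all contained in $\Pi$), form a planar $3$-reflective projective billiard with the required two-dimensional family of orbits, and Theorem \ref{thm:main_theorem_analytic} yields that $a\cap\Pi$ is supported by a line. With this inserted, your pencil-variation step and the appeal to Lemma \ref{lemma:lines_and_tangent} go through and coincide with the paper's conclusion.
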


\begin{proof}
By symmetry, let us just show that $a$ is supported by a hyperplane. Fix $m_A\in U$. For $m_B\in V$, consider the plane $P_{m_B}$ containing the triangular orbit starting by $(m_A,m_B)$, as in Lemma \ref{lemma:planar_orbits}. Consider $a_{m_B}$, $b_{m_B}$, $c_{m_B}$ to be the intersections of $P_{m_B}$ respectively with $a$, $b$, $c$: by transversality, and shrinking them if needed, we can suppose that they are immersed curves of $P_{m_B}$.

Now let $\alpha_{m_B}=\pi\ante(a_{m_B})\cap\alpha$. Since $\alpha$ and $a$ have the same dimension, $\pi_{|\alpha}:\alpha\to a$ is a diffeomorphism, and thus $\alpha_{m_B}$ is an immersed curve.
Define $\beta_{m_B}$ and $\gamma_{m_B}$ similarly. Let us show that $(\alpha_{m_B}, \beta_{m_B}, \gamma_{m_B})$ is a planar $3$-reflective projective billiard. Consider the open subsets $U'=U\cap\alpha_{m_B}$ of $\alpha_{m_B}$ and $V'=V\cap\beta_{m_B}$ of $\beta_{m_B}$. Any $m_B'\in V'$ is such that $(m_A,m_B')$ can be completed in an orbit $(m_A,m_B',m_C')$ of $\mathcal{B}$ and by Lemma \ref{lemma:planar_orbits}, $AB'C'$ defines a plane containing $L_A$ and $AB'$, which are intersecting lines inside $P_{m_B}$. Hence $AB'C'=P_{m_B}$ and thus $\beta_{m_B}$ is an analytic curve such that for all $m_B'=(B',L_B')\in V'$, $B'$ and $L_B'$ are in $P_{m_B}$. The same argument work for $\alpha_{m_B}$; hence also for $\gamma_{m_B}$ by completing any $(m_A',m_B')\in U'\times V'$ into a $3$-periodic orbit. Finally we showed that $(\alpha_{m_B}, \beta_{m_B}, \gamma_{m_B})$ is a $3$-reflective projective billiard inside $P_{m_B}$.

In particular, by Theorem \ref{thm:main_theorem_analytic}, $a_{m_B}$ is supported by a line, $\ell_{m_B}$. Now $\ell_{m_B}$ is included in $T_{A}a$ (since the tangent space of $a_{m_B}$ is included in the tangent space of $a$) and in $P_{m_B}$. This result is true for any $m_B\in V$, implying the same result for lines in a neighborhood of $\ell_{m_B}$ in $T_Aa$: hence by Lemma \ref{lemma:lines_and_tangent}, $a$ is supported by an hyperplane, which concludes the proof.
\end{proof}

Let $\hat{a}$ be the hyperplane supporting $a$ and $\hat{b}$ be the hyperplane supporting $b$.

\begin{lemma}
There is a point $B_0\in\hat{b}$ such that for all $m_A=(A,L_A)\in\alpha$ the line $L_A$ goes through $B_0$.
Similarly, there is a point $A_0\in\hat{a}$ such that for all $m_B=(B,L_B)\in\beta$ the line $L_B$ goes through $A_0$.
\end{lemma}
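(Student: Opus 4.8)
The plan is to show that the transverse line field on $\alpha$ is a pencil, i.e.\ that the map $m_A=(A,L_A)\mapsto L_A\cap\hat b$ is constant. First I would set up, for each orbit plane, the planar picture. Fix $m_A\in U$ and let $m_B$ range over $V$; denote by $P=P_{m_B}$ the plane of the orbit $(m_A,m_B,m_C)$ (Lemma \ref{lemma:planar_orbits}), and by $\ell_a^P=\hat a\cap P$, $\ell_b^P=\hat b\cap P$, $\ell_c^P=\hat c\cap P$ the support lines it cuts out. Exactly as in the proof of Lemma \ref{lemma:hypersurf_planes}, the triple $(\alpha_P,\beta_P,\gamma_P)$ is a planar $3$-reflective billiard, hence by Theorem \ref{thm:main_theorem_analytic} a right-spherical billiard $\mathcal B(P,Q,R)$ in the sense of Definition \ref{def:spherical_billiard}. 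Matching boundaries, $\ell_a^P$ supports $\alpha_P$ and the field on $\alpha_P$ points at the opposite vertex $R_P:=\ell_b^P\cap\ell_c^P$; in particular $L_{A}$ passes through $R_P$, and $R_P\in\ell_b^P\subset\hat b$.

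Next I would define $B_0(m_A):=L_A\cap\hat b$. Since $\hat a\neq\hat b$ we have $a\not\subset\hat b$, so for $A$ outside the proper analytic subset $a\cap\hat b$ the line $L_A$ (transverse to $\hat a$ by Lemma \ref{lemma:planar_orbits}, hence not contained in $\hat b$) meets $\hat b$ in a single point, and $B_0$ is analytic there. For any orbit plane $P$ through $m_A$ one has $R_P\in L_A\cap\hat b$, so $R_P=B_0(m_A)$; thus the single point $B_0(m_A)$ records the common vertex of every planar billiard through $m_A$.

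Then I would prove $B_0$ is locally constant. Keeping $m_A$ fixed, every orbit plane $P_{m_B}$ contains the fixed line $L_A$, so $\ell_a^{P_{m_B}}=\hat a\cap P_{m_B}$ is a line through $A$ in $\hat a$; as $B$ runs over the open set $\{B:(m_A,m_B)\in U\times V\}$ of $b$, the directions of these lines sweep an open subset of $\proj(T_A\hat a)$. For any $A'$ lying on one such $\ell_a^P$, the right-spherical structure in $P$ forces $L_{A'}$ through the same $R_P$, whence $B_0(A')=R_P=B_0(A)$. Therefore the differential $dB_0(A)$ annihilates an open cone of tangent directions, and being linear it vanishes; so $B_0$ is locally constant and, $\alpha$ being connected, equal to a single point $B_0\in\hat b$ through which all the lines $L_A$ pass. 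The symmetric argument applied to $\beta$ produces the point $A_0\in\hat a$.

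The main obstacle is the openness statement in the third paragraph: one must check that, with $m_A$ fixed, the lines $\hat a\cap P_{m_B}$ genuinely turn inside $\hat a$ as $B$ varies over an open subset of the $(d-1)$-dimensional $b$, i.e.\ that the planar reductions do not all share one support line $\ell_a$. This is where the full $(d-1)$-dimensionality of the orbit family, together with the transversality of $L_A$ to $\hat a$, is used; once an open cone of directions is available, analyticity and connectedness of $\alpha$ finish the argument cheaply.
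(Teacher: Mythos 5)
Your proposal is correct and follows essentially the same route as the paper's proof: fix $m_A$, pass to the orbit planes through $L_A$, apply Theorem \ref{thm:main_theorem_analytic} to identify each planar billiard as right-spherical with common vertex $R_P=L_A\cap\hat b=B_0$, observe that these planes sweep an open family, and conclude by analyticity and connectedness of $\alpha$. The only (harmless) cosmetic wrinkle is your detour through $dB_0(A)$ — vanishing of a differential at the single point $A$ does not by itself yield local constancy — but this is immaterial, since you have already shown that $B_0$ is constant on an open cone of lines through $A$, i.e.\ on an open subset of $a$, which is exactly what the paper uses before invoking analyticity.
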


\begin{proof}
Let us show the existence of $B_0$, the existence of $A_0$ being analogous. Fix $m_A=(A,L_A)\in U$, and consider the point $B_0\in\hat{b}$ of intersection of $L_A$ with $\hat{b}$. For $m_B\in V$, consider the plane $P_{m_B}$ containing the triangular orbit starting by $(m_A,m_B)$, as in Lemma \ref{lemma:planar_orbits}: define $a_{m_B}$, $b_{m_B}$, $c_{m_B}$, $\alpha_{m_B}$, $\beta_{m_B}$, $\gamma_{m_B}$, $U'$, $V'$  as in the proof of Lemma \ref{lemma:hypersurf_planes}.  One has $B_0\in b_{m_B}\subset P_{m_b}$, by Lemma \ref{lemma:planar_orbits}.

We recall that $(\alpha_{m_B}, \beta_{m_B}, \gamma_{m_B})$ is a planar $3$-reflective projective billiard. By Theorem \ref{thm:main_theorem_analytic} it is a right-spherical billiard, hence each $m_A'=(A',L_A')\in U'$ is such that $L_A'$ and $L_A$ intersect $b_{m_B}$ at the same point. By construction, this should be the point $B_0=L_A\cap b_{m_B}$. Therefore, any $m_A'=(A',L_A')\in U'$ is such that $L_A'$ passes through $B_0$. Hence by analyticity, if $\ell_{m_B}$ is the line of intersection of $P_{m_B}$ with $a$, every $m_A'=(A',L_A')\in \alpha\cap\pi\ante(\ell_{m_B})$ is such that $L_A'$ passes through $B_0$.

Now the union of all $\ell_{m_B}$ for $m_B\in V$ contains a non-empty open subset $\Omega$ of $a$, which by construction has the following property: all $m_A'=(A',L_A')\in \alpha\cap\pi\ante(W)$ is such that $L_A'$ passes through $B_0$. By analyticity, all $m_A'=(A',L_A')\in \alpha$ is such that $L_A'$ passes through $B_0$, and the proof is complete.
\end{proof}

Now we can finish the proof of Theorem \ref{thm:main_theorem_multidim_analytic}. Indeed, any $z=(m_A,m_B)\in U\times V$, can be completed in an orbit, which lies in a plane $P_z$. This plane $P_z$ contains $L_A$ and $L_B$ (Lemma \ref{lemma:planar_orbits}), hence goes through $A_0$ and $B_0$. 
If $A_0\neq B_0$, $P_z = AA_0B_0$, but this is impossible since in this case $P_z$ doesn't depend on $m_B$, which can be chosen such that $B\notin AA_0B_0$. Hence $A_0=B_0$ and $A_0\in\hat{a}$, implying that all $m_A=(A,L_A)\in U$ are such that $L_A\subset T_Aa$. This contradicts the definition of $\alpha$, and the theorem is proved.
\end{proof}

\section{Smooth case: proof of Theorems \ref{thm:main_theorem} and \ref{thm:main_theorem_multidim}}
\label{sec:smooth_case}

We want to prove Theorem \ref{thm:main_theorem} and Theorem \ref{thm:main_theorem_multidim} by extending Theorem \ref{thm:main_theorem_analytic} and Theorem \ref{thm:main_theorem_multidim_analytic} from the analytic to the smooth case.
%
%

\subsection{Lifts of submanifolds in the grassmanian bundle}

Let $M$ be a smooth manifold. Let $r,k\in\pinteger$. Consider the canonical projection $\pi: \gr_k(TM)\to M$ on the grassmanian bundle.

\begin{definition}
\label{def:first_lift}
Let $S\subset M$ be a $\class^{r}$-smooth injectively immersed submanifold of $M$ of dimension $k$. The \textit{first lift of $S$} is the injectively immersed $\class^{r-1}$-submanifold $S^{(1)}\subset\gr_k(TM)$, transverse to $\pi$, defined by
$$S^{(1)} = \{(x,T_xS)\sep x\in S\}.$$
\end{definition}

\begin{remark}
If $S$ is parametrized by a $\class^{r+1}$-smooth injective immersion $f:U\subset\real^k\to S$, then $S^{(1)}$ is parametrized by the $\class^r$-smooth injective immersion $f^{(1)}:U\subset\real^k\to S^{(1)}$ defined for all $x\in U$ by $f^{(1)}(x) = (f(x),\im df(x))$, and called \textit{first lift of $f$}.
\end{remark}

In what follows, denote by $J^r(X,Y)$ the space of $r$-jets of $\class^r$-smooth maps from a given manifold $X$ to another one $Y$, together with its usual topology.

\begin{lemma}
\label{lemma:jet_prol_convergence}
Let $f,g:U\subset\real^k\to\real^d$ be $\class^{r+1}$-smooth injective immersions, and let $0\in U$. Then for any neighborhood $V_0\subset J^{r+1}(\real^k,\real^d)$ containing the $(r+1)$-jet of $f$ at $0$, there is a neighborhood $V_1\subset J^r(\real^k,\gr_k(\real^d))$ depending only on $f$, containing the $r$-jet of $f^{(1)}$ at $0$, and verifying the following property: if the $r$-jet of $g^{(1)}$ at $0$ is in $V_1$, then there is a smooth diffeomorphism $\varphi:W\subset\real^k \to W'\subset U$, where $0\in W'=\varphi(W)$ for which the $(r+1)$-jet of $g\circ\varphi$ at $\varphi\ante(0)$ is in $V_0$.
\end{lemma}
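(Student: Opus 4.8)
The plan is to represent both immersions, near the base point, as graphs over a fixed $k$-plane, and to show that the $r$-jet of the lift determines \emph{continuously} the $(r+1)$-jet of that graph, which is exactly the reparametrization-invariant part of the $(r+1)$-jet. The diffeomorphism $\varphi$ will then merely realign the parametrization of $g$ with that of $f$. \textbf{Setup.} Put $p=f(0)$, $E=\im df(0)\in\gr_k(\real^d)$, and fix a complement $F$ so that $\real^d=E\oplus F$, with projections $\pi_E,\pi_F$. For any immersion $m$ near $0$ whose tangent plane $\im dm(0)$ is transverse to $F$, the map $\psi_m:=\pi_E\circ m$ is a local diffeomorphism near $0$ (its linear part $\pi_E\circ dm(0)\colon\real^k\to E$ is an isomorphism, since $\im dm(0)$ is $k$-dimensional and transverse to $F=\ker\pi_E$), and the canonical graph parametrization $\tilde m:=m\circ\psi_m\ante$ has the form $\tilde m(u)=u+h_m(u)$ with $h_m=\pi_F\circ\tilde m$; crucially $\tilde m$ depends only on the image $S_m=m(U)$ and the fixed splitting. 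Immersivity and transversality of $\im dm(0)$ to $F$ are open conditions on $j^r_0 m^{(1)}$ (they are read off its zeroth-order Grassmannian component and its first base derivative), so I can take $V_1$ small, depending only on $E$ and hence on $f$, so that every $g$ with $j^r_0 g^{(1)}\in V_1$ satisfies them and is a graph over $E$ near $g(0)$.

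\textbf{Extraction (the main point).} In these coordinates the Gauss map along the parametrization is $x\mapsto T_{m(x)}S_m=\operatorname{graph}\bigl(dh_m(\psi_m(x))\bigr)$; in the standard chart of $\gr_k(\real^d)$ around $E$ (planes that are graphs over $E$) the corresponding coordinate is precisely the linear map $dh_m(\psi_m(x))$. Hence $j^r_0 m^{(1)}$ amounts to $j^r_0 m$ together with $j^r_0\bigl(dh_m\circ\psi_m\bigr)$. Because $\psi_m=\pi_E\circ m$ has invertible linear part, the $r$-jet of $\psi_m\ante$ at $\psi_m(0)$ is a continuous function of $j^r_0 m$ (polynomial in the derivatives, divided by the Jacobian determinant); combining it with $j^r_0(dh_m\circ\psi_m)$ through the chain rule recovers $j^r_{\psi_m(0)}(dh_m)$, and therefore $j^{r+1}_{\psi_m(0)}h_m$ (the value $h_m(\psi_m(0))=\pi_F(m(0))$ already being part of $j^0_0 m$). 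All of these operations are continuous, so there is a continuous map $\Theta$ with $j^{r+1}_{\psi_m(0)}\tilde m=\Theta\bigl(j^r_0 m^{(1)}\bigr)$ on the relevant open set. In particular, as $j^r_0 g^{(1)}\to j^r_0 f^{(1)}$ one gets $\psi_g(0)\to\psi_f(0)$ and $j^{r+1}_{\psi_g(0)}\tilde g\to j^{r+1}_{\psi_f(0)}\tilde f$.

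\textbf{Reparametrization and conclusion.} Set $\varphi:=\psi_g\ante\circ\psi_f$, a local diffeomorphism, and let $z_0:=\psi_f\ante(\psi_g(0))$, so that $\varphi(z_0)=0$, i.e. $z_0=\varphi\ante(0)$ and $z_0\to 0$. Shrinking its domain $W$ gives $\varphi\colon W\to W'=\varphi(W)\subset U$ with $0\in W'$. By construction $g\circ\varphi=\tilde g\circ\psi_f$ while $f=\tilde f\circ\psi_f$, where $\psi_f$ is fixed (it depends only on $f$). Since composition of jets is continuous, the Extraction step yields
$j^{r+1}_{z_0}(g\circ\varphi)=j^{r+1}_{z_0}(\tilde g\circ\psi_f)\longrightarrow j^{r+1}_0(\tilde f\circ\psi_f)=j^{r+1}_0 f$
as $j^r_0 g^{(1)}\to j^r_0 f^{(1)}$. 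Consequently $V_1$ can be shrunk further — still as a function of $f$ and $V_0$ alone, uniformly over all admissible $g$ (this uniformity in $g$ being what the phrase ``depending only on $f$'' refers to) — so that $j^{r+1}_{z_0}(g\circ\varphi)\in V_0$, which is exactly the assertion.

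\textbf{Expected obstacle.} The only genuinely computational ingredient is the Extraction step, namely the claim that the $r$-jet of the lift captures the $(r+1)$-jet of the underlying geometry. The bookkeeping there — expressing the $j$-th derivative of the Gauss map through the $(j+1)$-th derivative of $h_m$ via Fa\`a di Bruno, and inverting $\psi_m$ at the level of jets — is the part requiring care, but it reduces to a continuity and regularity statement once the linear part of $\psi_m$ is seen to be invertible and to vary continuously. Everything else (openness of transversality and immersivity, the identity $g\circ\varphi=\tilde g\circ\psi_f$, and continuity of jet composition) is formal.
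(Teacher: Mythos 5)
Your proof is correct and follows essentially the same route as the paper's: both represent the immersions as graphs over a fixed $k$-plane, identify the Grassmannian chart coordinates of the lift with the first derivatives of the graph function, and conclude that $r$-jet closeness of the lifts forces $(r+1)$-jet closeness of the reparametrized maps. The only real difference is presentational — the paper normalizes $f$ into graph form by a ``without loss of generality'' composition with diffeomorphisms, whereas you work invariantly with the splitting $E\oplus F$ and construct $\varphi=\psi_g\ante\circ\psi_f$ explicitly, which has the side benefit of handling honestly the drift of the base point $\varphi\ante(0)$ away from $0$ that the paper's phrase ``sending $0$ on $0$'' quietly glosses over.
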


\begin{remark}
Therefore in terms of jets, the pointwise $\class^r$-convergence of first lifts implies the pointwise $\class^{r+1}$-convergence of the inital maps modulo reparametrization by diffeomorphisms.
\end{remark}

\begin{proof}
It is sufficient to prove Lemma \ref{lemma:jet_prol_convergence} when we replace $f$ and $g$ by $\psi\circ f\circ\psi'$ and by $\psi\circ g\circ\psi'$ for any diffeomorphisms $\psi:\real^d\to\real^d$ and $\psi':W\subset\real^k \to W'\subset U$, where $0\in W'=\psi'(W)$. Therefore we can suppose that $f$ is of the form $f:x\in U\mapsto (x,f_0(x))$, where $f_0:U\to\real^{d-k}$ is $\class^{r+1}$-smooth.

Now since $\pi\circ g^{(1)}=g$, we can choose a first neighborhood $V_1$ of $f^{(1)}$ such that if the $r$-jet of $g^{(1)}$ at $0$ is in $V_1$, then one can find a $\class^{\infty}$-smooth diffeomorphism $\varphi:W\subset\real^k \to W'\subset U$, sending $0$ on $0$, such that $g\circ\varphi$ is of the form $x\in W\mapsto (x,g_0(x))$, where $g_0:W\to\real^{d-k}$ is $\class^{r+1}$-smooth. Notice that $\im d(g\circ\phi) = \im dg$, hence $\im dg$ is generated by the vectors $B=(e_i,\partial_i g_0)$, $i=1\ldots k$, where $e_1,\ldots,e_d$ is the canonical basis of $\real^d$ and $\partial_i$ is the $i$-th partial derivative. Similarly $\im df$ is generated by the vectors $(e_i,\partial_i f_0)$, $i=1\ldots k$. 

Now $B$ defines a set of coordinates in $\gr_k(\real^d)$ for which the coordinates of $\im df$ are the coordinates of $(\partial_1 f_0,\ldots,\partial_k f_0)$ in $(e_{d-k},\ldots,e_d)$, and the coordinates of $\im dg$ are the coordinates of $(\partial_1 g_0,\ldots,\partial_k g_0)$ in $(e_{d-k},\ldots,e_d)$. Therefore, saying that the $r$-jet of $\im dg$ at $0$ is close to the $r$-jet of $\im df$ at $0$ means that the same holds for the $r$-jets at $0$ of the partial derivatives $(\partial_1 g_0,\ldots,\partial_k g_0)$ and $(\partial_1 f_0,\ldots,\partial_k f_0)$. And with the additionnal assumption that $g(0)$ is close to $f(0)$, this means that the $(r+1)$-jets of $f$ and $g\circ\varphi$ at $0$ are close.
\end{proof}

\begin{definition}
Define $\mathcal{G}(M, k, r)$ to be the set of all germs of $k$-dimensional $\class^r$-smooth submanifolds of $M$. For any open subset $\Omega\subset J^r(\real^k, M)$ of $r$-jets of mappings $\real^k\to M$, we define the subset $\mathcal{G}(\Omega)\subset\mathcal{G}(M, k, r)$ to be the set of germs $(S,p)$ of $\class^r$-smooth submanifolds for which one can find an injective immersion $f:\real^k\to S$ with $f(0)=p$ and the $r$-jet of $f$ at $0$ being in $\Omega$.

The topology generated by all $\mathcal{G}(\Omega)$ will be called \textit{Whitney $\class^r$-topology on $\mathcal{G}(M, k, r)$}.
\end{definition}

\begin{remark}
The Whitney $\class^r$-topology on $\mathcal{G}(M, k, r)$ is not Hausdorff: if two germs $(S,p)$ and $(S',p)$ of $\class^r$-smooth submanifolds have the same $r$-jets, then any neighborhood of $(S,p)$ contains $(S',p)$.
\end{remark}

We deduce the following proposition from Lemma \ref{lemma:jet_prol_convergence}.

\begin{proposition}
Let $S,S'\subset M$ be two $k$-dimensional $\class^{r+1}$-smooth submanifolds and $(p,p')\in S\times S'$. Then for any open subset $V_0\subset\mathcal{G}(M, k, r+1)$ containing $\left(S, p\right)$, there is an open subset $V_1\subset\mathcal{G}(\gr_k(TM), k, r)$ depending only on $S$ and containing $\left(S^{(1)}, (p,T_pS)\right)$ such that 
$$\left(S'^{(1)}, (p',T_{p'}S')\right)\in V_1 \Rightarrow (S',p')\in V_0.$$
\end{proposition}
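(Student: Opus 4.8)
The plan is to deduce this statement directly from Lemma \ref{lemma:jet_prol_convergence} by translating the germ/Whitney-topology language into jets and working in a single local chart. First I would unpack $V_0$. Since $V_0$ is open for the Whitney $\class^{r+1}$-topology and contains $(S,p)$, there is a basic open set $\mathcal{G}(\Omega_0)$, with $\Omega_0\subset J^{r+1}(\real^k,M)$ open, such that $(S,p)\in\mathcal{G}(\Omega_0)\subseteq V_0$. Membership $(S,p)\in\mathcal{G}(\Omega_0)$ provides a $\class^{r+1}$-smooth injective immersion $f$ with $f(0)=p$, $\im f\subset S$ and $j^{r+1}_0 f\in\Omega_0$. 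I would then fix a chart identifying a neighborhood of $p$ in $M$ with an open subset of $\real^d$ and trivializing $\gr_k(TM)$ over it as $\gr_k(\real^d)$, so that Lemma \ref{lemma:jet_prol_convergence} becomes applicable to $f$ and to lifts.

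Next I would construct $V_1$. Apply Lemma \ref{lemma:jet_prol_convergence} to $f$ with target neighborhood $\Omega_0$ (shrunk into the chart): it produces a neighborhood $V_1'\subset J^r(\real^k,\gr_k(\real^d))$ of $j^r_0 f^{(1)}$, depending only on $f$ (hence only on $S$ together with the prescribed $V_0$), such that $j^r_0 g^{(1)}\in V_1'$ forces the existence of a diffeomorphism $\varphi$ with $j^{r+1}_{\varphi\ante(0)}(g\circ\varphi)\in\Omega_0$. I then set $V_1:=\mathcal{G}(V_1')$, which is open in $\mathcal{G}(\gr_k(TM),k,r)$ and contains $\left(S^{(1)},(p,T_pS)\right)$ precisely because $j^r_0 f^{(1)}\in V_1'$ (using $f^{(1)}(0)=(p,T_pS)$).

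For the implication, suppose $\left(S'^{(1)},(p',T_{p'}S')\right)\in V_1=\mathcal{G}(V_1')$. By definition of $\mathcal{G}(V_1')$ there is an injective immersion $h$ into $S'^{(1)}$ with $h(0)=(p',T_{p'}S')$ and $j^r_0 h\in V_1'$. Since $V_1'$ is a neighborhood of $j^r_0 f^{(1)}$, the base point $(p',T_{p'}S')$ lies in the trivializing chart, so $p'$ is close to $p$ and the same chart handles $S'$. Because $S'^{(1)}$ is transverse to $\pi$ and $\pi$ restricts to a diffeomorphism $S'^{(1)}\to S'$ (Definition \ref{def:first_lift}), the map $g:=\pi\circ h$ is an immersion parametrizing $S'$ near $p'$ and $h=g^{(1)}$. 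Applying Lemma \ref{lemma:jet_prol_convergence} then yields $\varphi$ with $j^{r+1}_{\varphi\ante(0)}(g\circ\varphi)\in\Omega_0$, so $g\circ\varphi$ is a parametrization of $S'$ whose $(r+1)$-jet lies in $\Omega_0$, giving $(S',p')\in\mathcal{G}(\Omega_0)\subseteq V_0$, as required.

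The only real obstacle I anticipate is a regularity mismatch: germ membership in $\mathcal{G}(V_1')$ only supplies a $\class^r$ immersion $h$, hence $g=\pi\circ h$ is a priori merely $\class^r$, whereas Lemma \ref{lemma:jet_prol_convergence} requires a genuinely $\class^{r+1}$ immersion. I would resolve this by an upgrading step, exploiting that $S'$ is $\class^{r+1}$: fix a $\class^{r+1}$ graph parametrization $\gamma$ of $S'$ near $p'$ and choose a polynomial reparametrization $\theta$ with $j^r_0(\gamma\circ\theta)=j^r_0 g$; then $\tilde g:=\gamma\circ\theta$ is $\class^{r+1}$. Since the second component of a lift equals the $\class^r$ Gauss map of $S'$ precomposed with the parametrization, matching the $r$-jet of $g$ also matches the $r$-jet of the lift, so $j^r_0\tilde g^{(1)}=j^r_0 g^{(1)}\in V_1'$, and Lemma \ref{lemma:jet_prol_convergence} applies to $\tilde g$. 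Reconciling the ``there exists a parametrization'' nature of germ membership with the per-map hypothesis of the Lemma, while keeping all parametrizations $\class^{r+1}$, is the delicate point; the remainder is bookkeeping in the chart.
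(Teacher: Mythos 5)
Your proposal is correct and follows exactly the route the paper intends: the paper states this proposition with no written proof beyond the phrase that it is deduced from Lemma \ref{lemma:jet_prol_convergence}, and your argument is precisely that deduction carried out in detail (unpacking $V_0$ into a basic set $\mathcal{G}(\Omega_0)$, setting $V_1=\mathcal{G}(V_1')$ via the lemma, and identifying germs of lifts with lifts of parametrizations through $\pi$). Your extra step upgrading the a priori $\class^r$ parametrization $g=\pi\circ h$ to a $\class^{r+1}$ one by reparametrizing a $\class^{r+1}$ graph parametrization of $S'$ to match $r$-jets is a legitimate fix of a regularity wrinkle the paper leaves implicit, and it is sound because $r$-jets of lifts only involve the $\class^r$ Gauss map composed with the parametrization.
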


\subsection{Prolongation of Pfaffian systems and pointwise $\class^r$-approximation by analytic integral surfaces}

Suppose $M$ is an analytic manifold. Let $\mathcal{D}$ be an analytic distribution on $M$. Fix a $k\in\{1,\ldots,\dim \mathcal{D}\}$. Recall that an \textit{integral} $k$-dimensional surface of $\mathcal{D}$ is an $k$-dimensional immersed submanifold $S\subset M$ such that for all $x\in S$, $T_xS\subset\mathcal{D}(x)$. The latter can be $\class^r$, $\class^{\infty}$, or analytic.

\begin{definition}[\cite{glutkud2}, definition 21]
Given a family of analytic distributions $(\mathcal{D}_i)_i$ on $M$, we call the data $\mathcal{P} = (M,\mathcal{D},k;(\mathcal{D}_i)_i)$ a \textit{Pfaffian system with transversality conditions}. An integral surface of $\mathcal{P}$ is an \textit{integral surface of} $\mathcal{D}$ of dimension $k$ such that, for all $i$, its tangent subspaces either are transverse to $\mathcal{D}_i$, or intersect it by zero.
\end{definition}


\begin{definition}
The \textit{contact distribution} on $\gr_k(TM)$ is the distribution $\mathcal{K}$ on $\gr_k(TM)$ defined for all $(x,E)\in\gr_k(TM)$ by $\mathcal{K}(x,E) = d\pi\ante(E).$
\end{definition}

\begin{definition}[\cite{glutkud2}, definition 12]
A $k$-plane $E\in\gr_k(TM)$ is said to be \textit{integral} if for any $1$-form $\omega$ vanishing on $\mathcal{D}$, $d\omega$ vanishes on $E$.
\end{definition}

As described in \cite{glutkud2}, subsection 2.3, the set $\widetilde{M}_k$ of integral $k$-planes of $M$ is an analytic subset hence a stratified manifold: it is a locally finite and at most countable disjoint union of smooth analytically constructible subsets (see \cite{lojasiewicz}, section IV.8).

\begin{definition}[\cite{glutkud2}, definition 23]
Let $\mathcal{P} = (M,\mathcal{D},k;(\mathcal{D}_i)_i)$ be a Pfaffian system with transversality conditions. Let $M'$ be a stratum of $\widetilde{M}_k$, $\mathcal{K}'$ the restriction of $\mathcal{K}$ to $M'$, and $\mathcal{D}'_i$ the pull-back of  $\mathcal{D}_i$ on $M'$ for each $i$. The Pfaffian system $\mathcal{P}' = (M',\mathcal{K}',k;(\mathcal{D}'_i)_i,\ker d\pi)$ is called \textit{a first Cartan prolongation} of $\mathcal{P}$.
\end{definition}

First Cartan prolongations have the following property:

\begin{proposition}[\cite{glutkud2}, subsection 2.3, \cite{bryant_chern}, chapter VI]
\label{prop:lift_prol}
Let $\mathcal{P}=(M,\mathcal{D},k;(\mathcal{D}_i)_i)$ be a Pfaffian system with transversality conditions. The lift $S^{(1)}$ of an integral surface $S$ of $\mathcal{P}$ contains an open dense subset such that each its connected component $S'$ lies in some stratum $M'$ of $\widetilde{M}_k$, and such that $S'$ is an integral surface of the first Cartan prolongation $\mathcal{P}' = (M',\mathcal{K}',k;(\mathcal{D}'_i)_i,\ker d\pi)$. We call again $S'$ \textit{a first Cartan prolongation} of $S$.

\end{proposition}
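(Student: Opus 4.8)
The plan is to prove Proposition \ref{prop:lift_prol} in four steps, exploiting only the tautological geometry of the first lift together with the stratification of $\widetilde{M}_k$ recalled above; all the genuine content will be concentrated in a single generic-position argument, the rest being formal.

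First I would check that $S^{(1)}$ is contained in $\widetilde{M}_k$. For $x\in S$ the plane $T_xS$ satisfies $T_xS\subset\mathcal{D}(x)$ by the definition of an integral surface of $\mathcal{P}$; moreover, if $\omega$ is any $1$-form vanishing on $\mathcal{D}$ near $x$, then the pull-back $\omega|_S$ vanishes, whence $d(\omega|_S)=0$, i.e. $d\omega$ vanishes on $T_xS$. Thus $T_xS$ is an integral $k$-plane, so $(x,T_xS)\in\widetilde{M}_k$, and $S^{(1)}$ is an injectively immersed $k$-submanifold of $\gr_k(TM)$ lying inside the stratified set $\widetilde{M}_k=\bigsqcup_j M_j$.

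Second I would produce the open dense subset. Writing $A_j:=S^{(1)}\cap M_j$, each $A_j$ is locally closed in $S^{(1)}$ (because $M_j$ is locally closed in $\widetilde{M}_k$), the family $(A_j)_j$ is countable and locally finite, and the $A_j$ cover $S^{(1)}$. Set $\Omega:=\bigcup_j\operatorname{int}_{S^{(1)}}(A_j)$, which is open. Density would follow from a Baire-category argument: given a nonempty open $W\subset S^{(1)}$, the cover $W=\bigcup_j(W\cap A_j)$ by locally closed sets must, by Baire, have a member $A_j$ whose closure has nonempty interior $V\subset W$; since $A_j$ is open in its closure, $A_j\cap V$ is then a nonempty open subset of $S^{(1)}$ contained in $M_j$, hence in $\Omega$. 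Thus $\Omega$ meets every $W$ and is dense, and each connected component $S'$ of $\Omega$ lies in a single stratum $M'=M_j$; being open in $S^{(1)}$ and contained in $M'$, it is an immersed $k$-submanifold of $M'$. This generic-stratum step, relying on the frontier condition and local finiteness of the stratification of $\widetilde{M}_k$, is the one point where something beyond bookkeeping is needed, and is where I expect the main (though mild) obstacle to lie: one must rule out the possibility that $S^{(1)}$ pathologically sits inside the singular locus in a way preventing local containment in one stratum, which the countability and the local-closedness of strata precisely forbid.

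Third I would verify the tautological incidence making $S'$ integral for the prolonged system. Since $S^{(1)}$ is transverse to $\pi$ and $\pi$ maps it diffeomorphically onto $S$, for $p=(x,T_xS)\in S'$ one has $d\pi_p(T_pS')=T_xS$, so $T_pS'\subset d\pi_p^{-1}(T_xS)=\mathcal{K}(p)$, and as $S'\subset M'$ this gives $T_pS'\subset\mathcal{K}(p)\cap T_pM'=\mathcal{K}'(p)$; hence $S'$ is an integral $k$-surface of $\mathcal{K}'$. The transversality conditions follow from the same diffeomorphism: $d\pi_p$ is injective on $T_pS^{(1)}$, so $T_pS'\cap\ker d\pi_p=0$ and the $\ker d\pi$ condition holds by zero intersection; and since $\mathcal{D}'_i$ is the $\pi$-pull-back of $\mathcal{D}_i$ while $d\pi_p(T_pS')=T_xS$ inherits the transversality against $\mathcal{D}_i$ that $S$ enjoys as an integral surface of $\mathcal{P}$, the space $T_pS'$ is transverse to, or meets by zero, each $\mathcal{D}'_i(p)$. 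This shows each $S'$ is an integral surface of $\mathcal{P}'=(M',\mathcal{K}',k;(\mathcal{D}'_i)_i,\ker d\pi)$, and since $\Omega$ is already open and dense in $S^{(1)}$, the collection of such components is the required open dense subset of the lift, completing the proof.
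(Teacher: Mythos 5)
Your proposal is correct and complete. Note, however, that the paper contains no proof of this proposition at all: it is quoted as a known result, with references to Glutsyuk--Kudryashov (subsection 2.3 of that paper) and to Bryant--Chern et al.\ (Chapter VI), so there is no in-paper argument to compare yours against. What you have written is essentially the standard argument that underlies those citations, and each of its three steps holds up under scrutiny. Step one is fine: since $T_xS\subset\mathcal D(x)$ everywhere on $S$, every $1$-form $\omega$ annihilating $\mathcal D$ pulls back to zero on $S$, hence $d\omega$ restricts to zero on each $T_xS$, so the lifted planes are integral elements. Step two is the only place where real care is needed, and you handle it correctly: the strata $M_j$ are at most countably many, locally finite, and locally closed, so their traces $A_j=S^{(1)}\cap M_j$ are locally closed in the (intrinsic, hence finer) topology of the immersed surface $S^{(1)}$; Baire applied to any nonempty open $W\subset S^{(1)}$ then forces some $\overline{W\cap A_j}$ to have interior, and local closedness (openness of $A_j$ in its closure) converts that into a nonempty open piece of $S^{(1)}$ inside a single stratum. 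Since the interiors $\operatorname{int}_{S^{(1)}}(A_j)$ are pairwise disjoint open sets, each connected component of their union lies in one stratum, as required. Step three is the tautological part, and your two transversality checks are exactly right: injectivity of $d\pi$ on $T_pS^{(1)}$ gives zero intersection with $\ker d\pi$, and for the pulled-back distributions $\mathcal D_i'$ the dichotomy downstairs (transverse, respectively zero intersection) lifts because $d\pi_p$ maps $T_pS'$ onto $T_xS$ and $\mathcal D_i'(p)$ is by definition the $d\pi$-preimage of $\mathcal D_i(x)$ inside $T_pM'$; in the zero-intersection case one again uses injectivity of $d\pi_p$ on $T_pS'$, which you do. In short, your write-up supplies a self-contained proof of a statement the paper leaves to the literature.
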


\begin{corollary}
\label{cor:infinite_prol}
Let $\mathcal{P}$ be the same, as in Proposition \ref{prop:lift_prol}, and let $S\subset M$ be a $C^{\infty}$-smooth 
 integral surface of the Pfaffian system $\mathcal{P}$. Then there exist an infinite sequence of Pfaffian systems $\mathcal{P}^{(r)}$ 
 and an infinite sequence of non-empty integral surfaces $S_r$ of $\mathcal{P}^{(r)}$, $S_0=S$, 
  such that for every $r$ the system  $\mathcal{P}^{(r+1)}$ is 
 a first Cartan prolongation of $\mathcal{P}^{(r)}$, and the surface $S_{r+1}$  is a first Cartan prolongation of the 
 surface $S_r$.
\end{corollary}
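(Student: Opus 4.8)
The plan is to prove Corollary \ref{cor:infinite_prol} by induction on $r$, using Proposition \ref{prop:lift_prol} as the inductive step and verifying that a $C^{\infty}$-smooth integral surface always admits at least one nonempty first Cartan prolongation. The base case is immediate: we set $\mathcal{P}^{(0)} = \mathcal{P}$ and $S_0 = S$, where $S$ is the given $C^{\infty}$-smooth integral surface of $\mathcal{P}$.

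For the inductive step, suppose we have constructed a Pfaffian system $\mathcal{P}^{(r)} = (M^{(r)}, \mathcal{D}^{(r)}, k; (\mathcal{D}_i^{(r)})_i)$ together with a nonempty $C^{\infty}$-smooth integral surface $S_r$ of $\mathcal{P}^{(r)}$. First I would apply Proposition \ref{prop:lift_prol} to the integral surface $S_r$: its first lift $S_r^{(1)} \subset \gr_k(TM^{(r)})$ contains an open dense subset whose connected components each lie in some stratum $M'$ of $\widetilde{M}_k^{(r)}$ and are integral surfaces of the corresponding first Cartan prolongation $\mathcal{P}' = (M', \mathcal{K}', k; (\mathcal{D}_i')_i, \ker d\pi)$. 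Since $S_r$ is nonempty and $C^{\infty}$-smooth, its first lift $S_r^{(1)}$ is a nonempty $C^{\infty}$-smooth submanifold (of dimension $k$, by Definition \ref{def:first_lift}), and so the open dense subset provided by Proposition \ref{prop:lift_prol} is in particular nonempty. I would then select one such connected component $S'$; by construction it is a nonempty $C^{\infty}$-smooth integral surface of a first Cartan prolongation $\mathcal{P}'$ of $\mathcal{P}^{(r)}$. Setting $\mathcal{P}^{(r+1)} = \mathcal{P}'$ and $S_{r+1} = S'$ completes the inductive step.

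The construction can then be iterated indefinitely, since at each stage the produced surface $S_{r+1}$ is again a nonempty $C^{\infty}$-smooth integral surface of the Pfaffian system $\mathcal{P}^{(r+1)}$, which is exactly the hypothesis needed to apply Proposition \ref{prop:lift_prol} once more. This yields the desired infinite sequences $(\mathcal{P}^{(r)})_{r \geq 0}$ and $(S_r)_{r \geq 0}$ with the stated prolongation relations.

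The main point to verify carefully is that nonemptiness and $C^{\infty}$-smoothness are genuinely preserved at each step, so that the induction never stalls. The delicate issue is that Proposition \ref{prop:lift_prol} only guarantees an \emph{open dense} subset of $S_r^{(1)}$ consisting of integral surfaces of prolongations lying in strata of $\widetilde{M}_k^{(r)}$; one must ensure this subset is nonempty and that at least one of its connected components is retained as $S_{r+1}$. Since the first lift of a nonempty $C^{\infty}$-smooth immersed submanifold is itself nonempty, $C^{\infty}$-smooth, and of the same dimension $k$, any nonempty open subset of it is again a nonempty $C^{\infty}$-smooth $k$-dimensional submanifold, and each connected component of it that lies in a single stratum $M'$ inherits the structure of a $C^{\infty}$-smooth integral surface of the associated prolongation $\mathcal{P}'$. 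This is precisely what guarantees that the inductive hypothesis is reproduced, and hence that the sequences extend to all $r \in \pinteger$.
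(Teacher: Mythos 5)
Your proposal is correct and follows essentially the same route as the paper: an induction in which Proposition \ref{prop:lift_prol} is applied at each stage to produce a nonempty first Cartan prolongation $S_{r+1}$ of $S_r$ lying in a first Cartan prolongation $\mathcal{P}^{(r+1)}$ of $\mathcal{P}^{(r)}$. The paper states this in one sentence, while you additionally spell out the (correct) verification that nonemptiness, smoothness, and the dimension $k$ are preserved under taking the first lift, so the induction never stalls.
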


\begin{proof}
We deduce Corollary \ref{cor:infinite_prol} from Proposition \ref{prop:lift_prol} by induction: if $\mathcal{P}^{(r)}$ and $S_r$ are defined, choose $S_{r+1}$ to be a Cartan prolongation of $S_r$ which lies in a Cartan prolongation $\mathcal{P}^{(r+1)}$ of $\mathcal{P}^{(r)}$.
\end{proof}

We quote the following theorem, related to the existence of analytic integral surfaces of Pfaffian systems, which is cited in \cite{glutkud2}, theorem 24, and in \cite{bryant_chern}, chapter VI, paragraph 3.

\begin{theorem}[E. Cartan \cite{cartan}, M. Kuranishi \cite{kuranishi}, P. K. Rashevsky \cite{rashevsky}]
\label{theorem:cartan_ku_ra}
Let $\mathcal{P}=(M,\mathcal{D},k;(\mathcal{D}_i)_i)$ be a Pfaffian system with transversality conditions. Suppose that $\mathcal{P}$ has no analytic integral surfaces. Then for any sequence of Pfaffian systems $\mathcal{P}^{(r)}=(M^{(r)},\ldots)$ such that $\mathcal{P}^{(r+1)}$ is a first Cartan prolongation of $\mathcal{P}^{(r)}$, one can find an $r_0$ for which $M^{(r_0)}=\emptyset$.
\end{theorem}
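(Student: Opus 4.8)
The plan is to recognize this statement as the contrapositive of the classical chain consisting of the Cartan--Kuranishi prolongation theorem followed by the Cartan--K\"ahler existence theorem, and to argue by contradiction: I would assume that every manifold $M^{(r)}$ in the prolongation tower is non-empty and then manufacture an analytic integral surface of $\mathcal{P}$, contradicting the hypothesis. In this way the entire content of the theorem reduces to converting non-emptiness of the prolongation tower into the existence of an analytic integral surface of the base system $\mathcal{P}$.

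First I would invoke the Cartan--Kuranishi theorem on the tower $(\mathcal{P}^{(r)})_r$. Working stratum by stratum on the analytically constructible manifolds $\widetilde{M}_k$ of integral $k$-planes (recall that each prolongation lives in such a stratum, as recorded before Proposition \ref{prop:lift_prol}), the theorem guarantees that, as long as the tower does not die, i.e.\ as long as $M^{(r)}\neq\emptyset$ for all $r$, after finitely many prolongations the system becomes involutive on a non-empty stratum. The delicate point here is precisely the regularity and genericity hypotheses that the classical Cartan--Kuranishi statement requires: at each step one must select a stratum on which the relevant dimensions (the space of integral elements, the Cartan characters) are locally constant, so that involutivity can actually be tested. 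This stratified bookkeeping, carried out while preserving the transversality conditions $(\mathcal{D}_i)_i$ throughout, is where I expect the main technical obstacle to lie.

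Once a non-empty involutive analytic Pfaffian system $\mathcal{P}^{(r)}=(M^{(r)},\ldots)$ has been reached, I would apply the Cartan--K\"ahler theorem through a regular (ordinary) integral $k$-element to obtain an analytic integral surface $S_r\subset M^{(r)}$. It then remains to push $S_r$ back down the tower. Each prolongation $\mathcal{P}^{(r+1)}$ sits inside $\gr_k(TM^{(r)})$ equipped with the contact distribution $\mathcal{K}$, whose defining property $\mathcal{K}(x,E)=d\pi^{-1}(E)$ forces the projection $\pi(S_{r+1})$ of any integral surface to be an integral surface of $\mathcal{P}^{(r)}$ realizing the prescribed tangent planes; this is the downward counterpart of Proposition \ref{prop:lift_prol}. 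Iterating $\pi$ all the way down yields an analytic integral surface $\pi\circ\cdots\circ\pi(S_r)\subset M$ of $\mathcal{P}$ which, being built through regular integral elements, still meets the transversality conditions. This contradicts the assumption that $\mathcal{P}$ has no analytic integral surface, so the tower cannot remain everywhere non-empty: some $M^{(r_0)}=\emptyset$, as claimed. Since the statement is quoted from \cite{cartan, kuranishi, rashevsky}, in practice I would cite those sources for the two pillars (involutivity after finite prolongation, and Cartan--K\"ahler existence) rather than reprove them, and restrict the argument to the stratified prolongation and downward-projection bookkeeping described above.
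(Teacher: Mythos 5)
The paper does not prove this statement at all: it is quoted as a known theorem of Cartan, Kuranishi and Rashevsky, with pointers to \cite{glutkud2} (Theorem 24) and \cite{bryant_chern} (Chapter VI, paragraph 3), so there is no in-paper proof to compare your attempt against. Your outline is the standard argument underlying those references: pass to the contrapositive, run the Cartan--Kuranishi prolongation process until a non-empty involutive system is reached, produce an analytic integral manifold by Cartan--K\"ahler, and project it back down the tower. The downward step is indeed sound in the paper's framework: an integral surface of a first Cartan prolongation is tangent to $\mathcal{K}$ and satisfies the transversality condition with respect to $\ker d\pi$, so its projection is an immersed analytic $k$-surface whose tangent planes are exactly the prescribed integral elements, and the conditions $(\mathcal{D}_i)_i$ are preserved because they are pulled back under $\pi$. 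The one caveat is that your sketch is not itself a proof: the entire mathematical substance of the theorem --- the regularity and stratification bookkeeping needed to make the prolongation process either reach involutivity or die out in finitely many steps, which you correctly flag as the main obstacle --- is precisely what is established in \cite{kuranishi} and \cite{rashevsky}, and you (like the paper) end up citing it rather than proving it. Given that the paper attributes the theorem to these sources and uses it as a black box, that resolution is the appropriate one.
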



We deduce from this theorem the following

\begin{proposition}
\label{prop:analytic_approx}
Let $r\in\pinteger$, $S\subset M$ be a $\class^{\infty}$-smooth integral surface of the Pfaffian system $\mathcal{P}=(M,\mathcal{D},k;(\mathcal{D}_i)_i)$ and $p\in S$. Then for any open subset $V\subset\mathcal{G}(M, k, r)$ containing $(S,p)$ one can find an analytic integral surface $S'\subset M$ of $\mathcal{P}$ and $p'\in S'$ such that $(S',p')\in V$.
\end{proposition}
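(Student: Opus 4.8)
The plan is to combine Corollary \ref{cor:infinite_prol}, Theorem \ref{theorem:cartan_ku_ra}, and the lifting comparison of Lemma \ref{lemma:jet_prol_convergence} (in its geometric reformulation for germs) by contradiction, iterating the lift $r$ times. First I would assume that the conclusion fails: there is an open neighborhood $V\subset\mathcal{G}(M,k,r)$ of $(S,p)$ containing no germ of an analytic integral surface of $\mathcal{P}$. The strategy is to push this non-approximability statement up through $r$ Cartan prolongations, using at each stage the fact that $\class^r$-closeness of lifts forces $\class^{r+1}$-closeness of the original surfaces (modulo reparametrization), so that an analytic integral surface appearing at some prolongation level would project down to an analytic integral surface of $\mathcal{P}$ inside $V$.

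More precisely, I would argue as follows. By Corollary \ref{cor:infinite_prol}, since $S=S_0$ is a $\class^\infty$-smooth integral surface of $\mathcal{P}=\mathcal{P}^{(0)}$, there is an infinite sequence of Pfaffian systems $\mathcal{P}^{(r)}=(M^{(r)},\dots)$ with $\mathcal{P}^{(r+1)}$ a first Cartan prolongation of $\mathcal{P}^{(r)}$, together with non-empty $\class^\infty$-smooth integral surfaces $S_r$, each $S_{r+1}$ being a first Cartan prolongation of $S_r$; here $S_{r+1}\subset S_r^{(1)}$ lives in a stratum of $\widetilde{M^{(r)}}_k$, and by Definition \ref{def:first_lift} the base point $p$ lifts to a point $p_r\in S_r$ with $\pi(p_r)=p_{r-1}$. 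In particular all $M^{(r)}$ are non-empty, so Theorem \ref{theorem:cartan_ku_ra} guarantees that $\mathcal{P}$ \emph{does} admit an analytic integral surface; but that surface need not be anywhere near $(S,p)$, so the content of the proposition is the \emph{local} approximation, which is where the lifting lemma enters.

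The key step is a descending induction on the prolongation level realizing the non-approximability. Reformulating the hypothesis via the reformulated version of Lemma \ref{lemma:jet_prol_convergence} for germs: the open set $V\subset\mathcal{G}(M,k,r)$ containing no analytic integral germ of $\mathcal{P}$ produces an open set $V^{(1)}\subset\mathcal{G}(M^{(1)},k,r-1)$ containing $(S_1,p_1)$ such that any analytic integral germ of $\mathcal{P}^{(1)}$ in $V^{(1)}$ would project, via $\pi$ and a reparametrizing diffeomorphism, to an analytic integral germ of $\mathcal{P}$ lying in $V$ — which is excluded. Iterating this $r$ times yields an open set $V^{(r)}\subset\mathcal{G}(M^{(r)},k,0)$ containing $(S_r,p_r)$ and containing no analytic integral germ of $\mathcal{P}^{(r)}$. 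But $\class^0$-closeness is vacuous: $V^{(r)}$ is then essentially an open neighborhood in the base $M^{(r)}$ of the point $\pi(p_r)$ that contains no analytic integral germ whatsoever, so no analytic integral surface of $\mathcal{P}^{(r)}$ can pass near $\pi(p_r)\in M^{(r)}$. Since $M^{(r)}$ is non-empty, one then invokes Theorem \ref{theorem:cartan_ku_ra} applied to the prolongation tower $\mathcal{P}^{(r)},\mathcal{P}^{(r+1)},\dots$: as every $M^{(s)}$ with $s\ge r$ is non-empty, $\mathcal{P}^{(r)}$ must admit an analytic integral surface, and by the Cauchy–Kovalevskaya-type construction underlying that theorem such a surface can be taken through a prescribed point of the relevant stratum near $p_r$, contradicting the emptiness of analytic integral germs in $V^{(r)}$.

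The main obstacle I expect is the careful bookkeeping in the induction: one must check that the reparametrizing diffeomorphisms $\varphi$ supplied by Lemma \ref{lemma:jet_prol_convergence} at each level carry integral surfaces of the prolongation to integral surfaces of the original system (i.e. that reparametrization and the tangent-plane lift commute well enough that an analytic integral surface of $\mathcal{P}^{(s+1)}$ genuinely descends to an analytic integral surface of $\mathcal{P}^{(s)}$), and that the transversality conditions $(\mathcal{D}_i)_i$ and the added condition $\ker d\pi$ are preserved under projection; this is exactly the compatibility guaranteed by Proposition \ref{prop:lift_prol}. The second delicate point is ensuring that the analytic integral surface produced at level $r$ by Theorem \ref{theorem:cartan_ku_ra} can be localized near the base point $\pi(p_r)$ rather than merely existing somewhere in $M^{(r)}$; this uses that the Cartan–Kuranishi–Rashevsky existence theorem produces integral surfaces through any regular integral element, and $p_r$ lies on the smooth integral surface $S_r$ so its tangent plane is such an element.
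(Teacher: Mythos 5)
Your skeleton — trade $r$-jet closeness for $\class^0$ closeness by iterating Lemma \ref{lemma:jet_prol_convergence} along the tower of Cartan prolongations, then invoke Corollary \ref{cor:infinite_prol} and Theorem \ref{theorem:cartan_ku_ra} — is the same as the paper's, and your observation that analytic integral surfaces of $\mathcal{P}^{(s+1)}$ descend (via transversality to $\ker d\pi$) to analytic integral surfaces of $\mathcal{P}^{(s)}$ whose first lifts they are is correct and necessary. But the final step of your contradiction has a genuine gap. Theorem \ref{theorem:cartan_ku_ra}, as stated, only produces an analytic integral surface \emph{somewhere} in $M^{(r)}$; it says nothing about producing one through, or even near, a prescribed point. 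Your repair — that the ``Cauchy--Kovalevskaya-type construction underlying that theorem'' gives a surface through any regular integral element, and that $T_{p_r}S_r$ is such an element because $S_r$ is a smooth integral surface — is not justified: regularity in the Cartan--K\"ahler sense is a nontrivial condition that tangent planes of smooth integral surfaces need not satisfy, and the whole reason the Cartan--Kuranishi--Rashevsky prolongation theorem exists is that one cannot, in general, apply a Cauchy--Kovalevskaya-type argument directly at a given integral element. So as written nothing prevents the analytic surface supplied by the theorem from lying far from the neighborhood where your set $V^{(r)}$ has any force, and the contradiction does not close.

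The missing idea is localization of the Pfaffian system itself, and it is exactly how the paper closes the argument (at level $0$ rather than level $r$). After reducing to $r=0$ — which your lifting induction accomplishes, just run in the direct rather than contrapositive direction — one does not apply the existence theorem to $\mathcal{P}$ on all of $M$, but to the restricted system $\mathcal{P}|_V$ on the open set $V\ni p$. Since $S\cap V$ is a non-empty $\class^{\infty}$-smooth integral surface of $\mathcal{P}|_V$, Corollary \ref{cor:infinite_prol} applied to $\mathcal{P}|_V$ yields a prolongation tower all of whose manifolds are non-empty, and then the contrapositive of Theorem \ref{theorem:cartan_ku_ra}, applied to $\mathcal{P}|_V$, yields an analytic integral surface of $\mathcal{P}|_V$ — which lies in $V$ by construction, because the ambient manifold of the restricted system \emph{is} $V$. ``Existence somewhere'' becomes ``existence in $V$'' for free. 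The same restriction trick (applied to $\mathcal{P}^{(r)}$ on the $\class^0$-neighborhood of $p_r$) would also repair your level-$r$ argument, but once it is available the contradiction framing is superfluous and the direct statement is cleaner.
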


\begin{remark}
Proposition \ref{prop:analytic_approx} means that the $r$-jet of $S$ at $p$ can be approximated by the $r-$jets of germs of analytic integral surfaces of $\mathcal{D}$.
\end{remark}

\begin{proof}
It is enough to prove Proposition \ref{prop:analytic_approx} for $r=0$, since we can apply it to the $r$-th prolongation of $S$ and use Lemma \ref{lemma:jet_prol_convergence} to conclude the same on the $r$-jets of $S$. Thus we need to show that for any open subset $V\subset M$ containing $p$, one can find an analytic integral surface $S'$ of $\mathcal{P}$ intersecting $V$. 

But $V$ contains the $\class^{\infty}$ integral surface $S\cap V$ of $\mathcal{P}$. Therefore, one can find a sequence of prolongations $\mathcal{P}^{(r)}=(M^{(r)},\ldots)$ of the Pfaffian system $\mathcal{P}=(M,\mathcal{D},k;(\mathcal{D}_i)_i)$ such that $M^{(r)}\neq \emptyset$ for all $r$ (Corollary \ref{cor:infinite_prol}). Therefore Theorem \ref{theorem:cartan_ku_ra} implies the existence of an analytic integral surface of $\mathcal{P}$ in $V$.

%
%
\end{proof}

\subsection{Birkhoff's ditribution}

We define another version of Birkhoff's distribution for projective billiards, which will be useful. Let $d\geq 2$. Denote by $M$ the open subset of $\proj(T\rp^d)^3$ constituted by the elements $z=(A,L_A,B,L_B,C,L_C)$ where $A$, $B$, $C$ are not on the same line and $L_A\neq AB,AC$, $L_B\neq BA,BC$, $L_C\neq CA, CB$. Write $proj_G:M\to\proj(T\rp^d)$ and $\pi_G:M\to\rp^d$ the maps such that $proj_G(z)=(G,L_G)$ and $\pi_G(z)=G$ for $G=A,B,C$. Define Birkhoff's distribution $\mathcal{D}$ on $M$ by 
$$\mathcal{D}(z) = d\pi_A\ante(T_A)\cap d\pi_B\ante(T_B)\cap d\pi_C\ante(T_C)$$ where $T_A(z)$ is the only line through $A$ such that the quadruple of lines $(AB,AC,L_A,T_A)$ is harmonic, and $T_B(z), T_C(z)$ are defined analogously. 

Now, consider the subset $M\etoile\subset\gr_{2(d-1)}(M)$ made by the $2(d-1)$-dimensional integral planes $E\subset TM$ of $\mathcal{D}$, on which $dproj_G$ has rank $d-1$ for $G=A,B,C$: it is an open dense subset of an analytic subset. One can define the contact distribution $\mathcal{K}$ on $M\etoile$ by
$$\mathcal{K}(z,E) = d\pi\ante(E)$$ 
where $\pi:\gr_{2(d-1)}(M)\to M$ is the canonical projection. The distribution $\mathcal{K}$ has the following property related to $3$-reflective projective billiards:

\begin{proposition}[Version of \cite{glutkud2} Lemma 18 for projective billiards]
\label{prop:distribution_pseudo_billard}
Consider a $3$-reflective projective billiard $(\alpha,\beta,\gamma)$. Let $S$ be a subset of triangular orbits parametrized by an open subset of $\alpha\times\beta$. Then $S\subset M$ is a $2(d-1)$-dimensional injectively immersed submanifold, its first lift $S^{(1)}$ (as in Definition \ref{def:first_lift}) lies in $M\etoile$ and is an integral surface of the Pfaffian system $(M\etoile,\mathcal{K},2(d-1);\ker d\pi)$.
\end{proposition}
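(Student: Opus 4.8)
The plan is to realize $S$ as the graph of the "third–vertex" map over an open subset of $\alpha\times\beta$, to check that its first lift lands in $M\etoile$ using the projective reflection law at each vertex, and finally to invoke the tautological property of the contact distribution. First I would fix the parametrization. By Lemma \ref{lemma:initial_conditions} together with Formula \eqref{eq:form_symmetry}, the assignment sending $(m_A,m_B)\in\alpha\times\beta$ to the third vertex $m_C=(C,L_C)$ of the triangular orbit it determines is smooth (resp.\ analytic) on the open set $U$ parametrizing $S$: the point $C$ is the intersection of the two lines reflected from $AB$ at $A$ and at $B$, and $L_C$ is then pinned down by the harmonicity condition at $C$. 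Thus $S$ is the image of $\Phi:U\to\proj(T\rp^d)^3$, $\Phi(m_A,m_B)=(m_A,m_B,m_C)$, and $\Phi$ is an injective immersion because forgetting the third factor is a smooth left inverse; hence $\dim S=\dim(\alpha\times\beta)=2(d-1)$. Finally $S\subset M$: the vertices $A,B,C$ are non-collinear by definition of a triangular orbit, and at each vertex the reflection is a harmonic quadruple of \emph{distinct} lines (the edges $AB$ and $AC$ are distinct since $B\neq C$ are not collinear with $A$), which forces $L_A\neq AB,AC$ and likewise at $B$ and $C$.

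Next I would establish $S^{(1)}\subset M\etoile$, where $S^{(1)}=\{(z,T_zS)\}$ as in Definition \ref{def:first_lift}. The rank conditions are immediate: $proj_A\circ\Phi$ and $proj_B\circ\Phi$ factor through the projections $U\to\alpha$ and $U\to\beta$ followed by the identifications $\alpha\simeq a$, $\beta\simeq b$, so $dproj_A$ and $dproj_B$ have rank $d-1$ on $T_zS$; and $proj_C\circ\Phi$ sweeps out an open subset of the $(d-1)$-dimensional classical boundary $c$, giving rank $d-1$ for $dproj_C$. The substantial point is that $T_zS$ is an integral plane of Birkhoff's distribution $\mathcal{D}$. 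Here I would first observe that, by Lemma \ref{lemma:planar_orbits}, along $S$ the line $L_A$ lies in the plane $ABC$, so that the harmonic conjugate lines $T_A,T_B,T_C$—and hence $\mathcal{D}$ itself—are well-defined at every point of $S$ (this is exactly the place where dimension $d\geq3$ would otherwise cause $\mathcal{D}$ to degenerate, since a priori $L_A$ need not be coplanar with the triangle). The reflection law at $A$, $B$, $C$ says precisely that the cross-ratio relations defining $\mathcal{D}$ through $T_A,T_B,T_C$ hold identically on $S$; differentiating these relations along $S$ then shows that every $1$-form cutting out $\mathcal{D}$ has differential vanishing on $T_zS$, i.e.\ $T_zS$ is an integral element of $\mathcal{D}$ in the sense recalled above. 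Combined with the rank conditions, this places $(z,T_zS)$ in $M\etoile$.

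It then remains to see that $S^{(1)}$ is an integral surface of $(M\etoile,\mathcal{K},2(d-1);\ker d\pi)$, which is the tautological lift property already exploited in Proposition \ref{prop:lift_prol}. Since $S^{(1)}$ is the canonical lift of $S$, we have $d\pi\bigl(T_{(z,E)}S^{(1)}\bigr)=T_zS=E$, hence $T_{(z,E)}S^{(1)}\subset d\pi\ante(E)=\mathcal{K}(z,E)$, so $S^{(1)}$ is tangent to $\mathcal{K}$; moreover $\pi$ restricts to a diffeomorphism $S^{(1)}\to S$, so $d\pi$ is injective on $T_{(z,E)}S^{(1)}$ and this tangent space meets $\ker d\pi$ only at $0$, verifying the transversality condition.

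I expect the main obstacle to be the middle step: checking that $T_zS$ is a genuine integral element of $\mathcal{D}$. This amounts to writing the harmonic reflection conditions of Section \ref{sec:reflection_law} as the defining relations of $\mathcal{D}$, confirming via Lemma \ref{lemma:planar_orbits} that these relations and the conjugate lines $T_A,T_B,T_C$ stay non-degenerate along $S$, and then differentiating the cross-ratio identities to conclude the integral-element condition; by contrast, the parametrization argument of the first paragraph and the formal tautological property of the Grassmannian contact system in the last paragraph are routine.
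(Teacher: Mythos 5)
Your proposal follows essentially the same route as the paper's own proof: realize $S$ as the graph of the billiard map over an open subset of $\alpha\times\beta$ (giving an injectively immersed $2(d-1)$-dimensional submanifold and the rank conditions on $dproj_G$), observe that the reflection law at the three vertices is exactly the condition $T_zS\subset\mathcal{D}(z)$ so that the tangent planes of $S$ are integral and $S^{(1)}\subset M\etoile$, and conclude with the tautological property of the contact distribution and the transversality of $S^{(1)}$ to $\ker d\pi$. The paper's proof is simply a terser version of this; the points you spell out (the verification that $S\subset M$, the coplanarity of $L_A,L_B,L_C$ with the plane $ABC$ via Lemma \ref{lemma:planar_orbits}, and the pull-back argument showing that annihilating $1$-forms and their differentials vanish on $T_zS$) are exactly what it leaves implicit.
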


\begin{proof}
Consider the projective billiard map $B:\alpha\times\beta\to\gamma$. By definition, one can find an open subset $U\subset\alpha\times\beta$ such that $S=\{(p,q,B(p,q))\sep (p,q)\in U\}$. Hence $S$ is parametrized by the injective immersion $s:U\to M$ defined by $s(p,q)= (p,q,B(p,q))$, which makes $S$ a $\class^{\infty}$-smooth $2(d-1)$-dimensional injectively immersed submanifold. For $G=A,B,C$, $dproj_G$ has rank $d-1$ on $TS$ since it sends $S$ to $\alpha$, $\beta$ or $\gamma$. 

Now $S$ is an integral surface of $\mathcal{D}$ since we can easily state the following equivalence: for $z\in S$, $T_zS\subset\mathcal{D}(z)$ if and only if $z$ is a $3$-periodic orbit of $(\alpha,\beta,\gamma)$. Therefore the tangent planes of $S$ are integral and $S^{(1)}\subset M\etoile$. We now easily have that $S^{(1)}$ is also a $\class^{\infty}$-smooth integral surface of $\mathcal{K}$ transverse to $\pi$.
\end{proof}

\begin{proposition}[Version of \cite{glutkud2} Lemma 17 for projective billiards]
\label{prop:reflective_billiard_distribution}
Let $S\etoile$ be an integral surface of dimension $2(d-1)$ of the Pfaffian system $(M\etoile,\mathcal{K},2(d-1);\ker d\pi)$. Then any point in $S\etoile$ is contained in an open subset $U$ of $S\etoile$ for which $(proj_A\circ\pi(U),proj_B\circ\pi(U),proj_C\circ\pi(U))$ is a local $3$-reflective projective billiard.
\end{proposition}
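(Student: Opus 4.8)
The plan is to reverse the construction of Proposition \ref{prop:distribution_pseudo_billard}, proceeding exactly as in the proof of Proposition \ref{prop:link_birkhoff_billiard} but starting now from the lifted object. I fix a point $(z_0,E_0)\in S\etoile$ and work in a small neighborhood throughout, so that the final statement is local as required.

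First I would exploit the transversality condition attached to the Pfaffian system. Since $S\etoile$ is an integral surface of $(M\etoile,\mathcal{K},2(d-1);\ker d\pi)$, its tangent spaces meet $\ker d\pi$ only at $0$; hence $\pi|_{S\etoile}$ is an immersion, and shrinking around $(z_0,E_0)$ we may regard $S:=\pi(S\etoile)$ as a $2(d-1)$-dimensional injectively immersed submanifold of $M$. Because $T_{(z,E)}S\etoile\subset\mathcal{K}(z,E)=d\pi^{-1}(E)$ while $d\pi$ is injective on $T_{(z,E)}S\etoile$, a dimension count gives $d\pi(T_{(z,E)}S\etoile)=E$, i.e. $T_zS=E$. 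Thus $(z,E)\in S\etoile$ forces $E=T_zS$, so locally $S\etoile$ coincides with the first lift $S^{(1)}$ of $S$ in the sense of Definition \ref{def:first_lift}.

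Next, since $(z,E)\in M\etoile$, the plane $E=T_zS$ is a $2(d-1)$-dimensional integral plane of $\mathcal{D}$ on which each $d\,\mathrm{proj}_G$ has rank $d-1$. In particular $T_zS\subset\mathcal{D}(z)$, so $S$ is an integral surface of Birkhoff's distribution, and by the constant rank theorem (shrinking to an open $U\subset S\etoile$) the projections $\alpha=\mathrm{proj}_A(\pi(U))$, $\beta=\mathrm{proj}_B(\pi(U))$, $\gamma=\mathrm{proj}_C(\pi(U))$ are immersed $(d-1)$-dimensional line-framed hypersurfaces, with classical boundaries $a,b,c$. The heart of the argument is to read the inclusion $T_zS\subset\mathcal{D}(z)=\bigcap_G d\pi_G^{-1}(T_G)$ geometrically: by the definition of $\mathcal{D}$ the edges $AB$, $AC$ are harmonic conjugate to $L_A$ and to the distinguished direction $T_A(z)$ recorded by the distribution, and by the permutation invariance of the cross-ratio recalled in Remark \ref{remark:symmetric_cross_ratio} this is exactly the symmetry of $AB$ and $AC$ with respect to $(L_A,T_Aa)$, and similarly at $B$ and $C$. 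This is precisely the equivalence already used in the proof of Proposition \ref{prop:distribution_pseudo_billard}, so every $z\in\pi(U)$ is a $3$-periodic orbit of $(\alpha,\beta,\gamma)$.

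Finally, having shown that $\pi(U)\cong U$ consists of $3$-periodic orbits, I would upgrade this pointwise statement to an open family as at the end of the proof of Proposition \ref{prop:link_birkhoff_billiard}: the rank-$(d-1)$ conditions make $\mathrm{proj}_A,\mathrm{proj}_B,\mathrm{proj}_C$ local diffeomorphisms of $U$ onto $\alpha,\beta,\gamma$, so $\pi(U)$ projects onto a $2(d-1)$-dimensional set of initial conditions, and Lemma \ref{lemma:initial_conditions} allows one to take these initial conditions inside $\alpha\times\beta$. This exhibits an open set of triangular orbits, i.e. $(\alpha,\beta,\gamma)$ is a local $3$-reflective projective billiard. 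I expect the only genuine obstacle to be the geometric reading in the third step, namely checking that the algebraic integrality condition $T_zS\subset\mathcal{D}(z)$ encodes the projective reflection law simultaneously at all three vertices (so that the harmonic-conjugate directions built into $\mathcal{D}$ really are realized by the tangents of $a,b,c$); the transversality and rank bookkeeping of the remaining steps is routine and parallels \cite{glutkud2}, Lemma 17.
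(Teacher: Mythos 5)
Your proof is correct and takes essentially the same approach as the paper's own proof: both use transversality to $\ker d\pi$ to identify $S\etoile$ locally with the first lift of the projected surface $S=\pi(S\etoile)$, the rank conditions built into $M\etoile$ to make the three projections immersed $(d-1)$-dimensional families, and the equivalence between integrality for $\mathcal{D}$ and the projective reflection law already invoked in Proposition \ref{prop:distribution_pseudo_billard} to conclude $3$-reflectivity. The extra details you supply (the dimension count giving $E=T_zS$, and the final appeal to Lemma \ref{lemma:initial_conditions} to produce an open set of initial conditions in $\alpha\times\beta$) are precisely what the paper compresses into ``the same argument as in the proof of Proposition \ref{prop:distribution_pseudo_billard}.''
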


\begin{proof}
Since $S\etoile$ is transverse to $\pi$ and $proj_G$ has rank $d-1$ on the planes constituting $M\etoile$, any $U$ sufficiently small is such that $\pi(U)$ is an integral surface of $\mathcal{D}$ and $proj_G(\pi(U))$ is an immersed submanifold of $\proj(T\rp^d)$ of dimension $d-1$ for $G=A,B,C$. Now the same argument as in the proof of Proposition \ref{prop:distribution_pseudo_billard} leads to the result that $(proj_A\circ\pi(U),proj_B\circ\pi(U),proj_C\circ\pi(U))$ is a local $3$-reflective projective billiard.
\end{proof}

\begin{remark}
\label{remark:construction_pseudo}
In the case when $d=2$, both Proposition \ref{prop:distribution_pseudo_billard} and Proposition \ref{prop:reflective_billiard_distribution}, together with the existence of a 3-reflective billiard, imply the existence of local projective billiards having a set of triangular orbits of non-zero measure (in the space of initial conditions), and which are not right-spherical billiards. Consider the first lift $S^{(1)}_{RS}\subset M\etoile$ of the surface $S_{RS}$ corresponding to triangular orbits of a right spherical billiard, as in Proposition \ref{prop:distribution_pseudo_billard}. Choose a surface $S\etoile\subset M\etoile$ which coincide with $S^{(1)}_{RS}$ only on a compact set which has empty interior but is of non-zero measure (for example a specific Cantor set). Then $S\etoile$ is what is called a \textit{pseudo-integral surface} of $\mathcal{K}$: the set $V$ of points $p\in S^{(1)}_{RS}$ for which $T_pS^{(1)}_{RS}\subset\mathcal{K}(p)$ has non-zero measure (in $S^{(1)}_{RS}$). Now, as in the proof of Proposition \ref{prop:reflective_billiard_distribution}, 
to any $p\in V$ corresponds a neighborhood $p\in U\in S\etoile$ projecting to line-framed hypersurfaces $\alpha,\beta,\gamma$, and the latter have the property announced in Remark \ref{remark:introducing_contruction_pseudo}: the corresponding local projective billiard has a set of triangular orbits of non-zero measure (in the space of initial conditions), obtained by projecting $V$ to $M$.
\end{remark}

\begin{proof}[Proof of Theorem \ref{thm:main_theorem}]
Here $d=2$. Let $\mathcal{B}=(\alpha, \beta,\gamma)$ be a local $\class^{\infty}$-smooth $3$-reflective planar projective billiard in the plane, and suppose that it is not right-spherical. Let $z$ be the orbit formed by its basepoints. Define $S^{(1)}$ to be the first lift of the subset $S$ from Proposition \ref{prop:distribution_pseudo_billard}, which is a $\class^{\infty}$-smooth integral surface of the Pfaffian system $\mathcal{P}=(M\etoile,\mathcal{K},2,\ker d\pi)$.

Let $r\in\pinteger$ and $p\in S^{(1)}$. By Proposition \ref{prop:analytic_approx}, for any open subset $V\subset\mathcal{G}(M\etoile, 2, r)$ containing $(S^{(1)},p)$ one can find an analytic integral surface $S\etoile_a\subset M$ of $\mathcal{P}=(M\etoile,\mathcal{K},2;\ker d\pi)$ and $p'\in S\etoile_a$ such that $(S\etoile_a,p')\in V$. By Proposition \ref{prop:reflective_billiard_distribution}, any small neighborhood $U=U(p')$ of the point $p'$ in $S\etoile_a$ corresponds to a local $3$-reflective projective billiard $(\pi_A\circ\pi(U),\pi_B\circ\pi(U),\pi_C\circ\pi(U))$. By Theorem \ref{thm:main_theorem_analytic}, the latter is a right-spherical billiard. By analyticity of $S\etoile_a$ and the fact that the latter is connected, $(proj_A\circ\pi(S\etoile_a),proj_B\circ\pi(S\etoile_a),proj_C\circ\pi(S\etoile_a))$ is a right-spherical billiard. Therefore, the $r$-jets of the classical borders of $\mathcal{B}$ at all points are limits of $r$-jets of lines. Hence the classical borders of $\mathcal{B}$ are supported by lines. To conclude, we can state the

\begin{lemma}
If the classical borders $a,b,c$ of $\mathcal{B}$ are supported by lines, the latter is a right-spherical billiard.
\end{lemma}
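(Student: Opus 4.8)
The plan is to deduce the right-spherical structure from the smooth-to-analytic approximation already set up in the proof of Theorem~\ref{thm:main_theorem}, rather than to reprove Proposition~\ref{prop:reflectivity_and_lines1} by hand. First I would record two preliminary facts about the supporting lines $\ell_a,\ell_b,\ell_c$ of $a,b,c$. They are pairwise distinct, since otherwise no $3$-periodic orbit exists (as in the opening remark of the proof of Proposition~\ref{prop:reflectivity_and_lines1}). They are moreover not concurrent: the Case~$1$ argument in that proof invokes only the pointwise reflection law \eqref{eq:form_symmetry} together with the continuity of an orbit in its initial conditions, so it remains valid for a smooth real $3$-reflective billiard, and it shows that three concurrent supporting lines are incompatible with $3$-reflectivity. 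Consequently the three pairwise intersection points
$$V_a=\ell_b\cap\ell_c,\qquad V_b=\ell_c\cap\ell_a,\qquad V_c=\ell_a\cap\ell_b$$
are well defined and non-collinear.

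Next I would pin down the line field. Recall that the lift $S^{(1)}$ of the orbit surface is a $\class^\infty$-smooth integral surface of the Pfaffian system $(M\etoile,\mathcal K,2;\ker d\pi)$, and that by Proposition~\ref{prop:analytic_approx} its $r$-jet at any point is a limit of $r$-jets of analytic integral surfaces $S\etoile_a$, each of which projects through $proj_A\circ\pi,\ proj_B\circ\pi,\ proj_C\circ\pi$ to an analytic $3$-reflective billiard, hence to a right-spherical billiard by Theorem~\ref{thm:main_theorem_analytic}. In a right-spherical billiard the framed $a$-border is the map $A\mapsto(A,AV)$, where $V$ is the vertex opposite to the side carrying $A$, i.e. the intersection of the two other supporting lines. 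Reading off the $0$-jet of the approximation at a point $A_0\in a$ therefore gives
$$L_{A_0}=\lim_{n}\ A_0^{(n)}V_a^{(n)},\qquad A_0^{(n)}\to A_0,\quad V_a^{(n)}=\ell_b^{(n)}\cap\ell_c^{(n)},$$
where $\ell_b^{(n)},\ell_c^{(n)}$ are the supporting lines of the $n$-th approximating billiard.

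The key point is that these data converge correctly. Since the classical borders of $\mathcal B$ are \emph{exactly} the lines $\ell_a,\ell_b,\ell_c$, the approximating supporting lines converge to them (the $r$-jets of the framed borders converge for every $r\ge 1$, and a line is determined by its $1$-jet at any point); as two distinct lines of $\rp^2$ meet in a single point depending continuously on the pair, we get $V_a^{(n)}\to V_a$, whence $L_{A_0}=A_0V_a$. Running the same argument on $b$ and $c$ yields $L_{B_0}=B_0V_b$ and $L_{C_0}=C_0V_c$. Since $\ell_a=V_bV_c$, $\ell_b=V_cV_a$, $\ell_c=V_aV_b$ and the three vertices are non-collinear, this is precisely the billiard $\mathcal B(V_b,V_c,V_a)$ of Definition~\ref{def:spherical_billiard}, so $\mathcal B$ is right-spherical.

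The main obstacle will be the bookkeeping around the convergence in the third paragraph: one must check that the approximation in $\mathcal G(M\etoile,2,r)$ controls the \emph{framing} value $L_{A_0}$ and not merely the classical border, and that the approximating opposite vertices $V_a^{(n)}$ genuinely converge — which is exactly where the non-concurrency established in the first paragraph is used, as it guarantees that the limiting supporting lines are distinct and that the intersection map is continuous there. Everything else is a direct transcription of the analytic case.
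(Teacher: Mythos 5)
Your proof is correct, but it takes a genuinely different route from the paper's. The paper handles this lemma by an analyticity bootstrap: once $a,b,c$ are lines, fix $m_A=(A,L_A)\in\alpha$; for $B\in\ell_b$ the reflected line $AB^{\ast m_A}$ meets $\ell_c$ at a point $C(B)$ depending analytically on $B$ (formula \eqref{eq:form_symmetry} plus the implicit function theorem), and $3$-reflectivity forces $L_B$ to be the harmonic conjugate of $\ell_b$ with respect to the pair $(BA,BC(B))$; hence $\beta$ (and likewise $\gamma$, then $\alpha$ by symmetry) is real-analytic, and the real-analytic version of Proposition \ref{prop:reflectivity_and_lines1} concludes. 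You bypass both this bootstrap and Case 2 of Proposition \ref{prop:reflectivity_and_lines1}: you make a second pass through the approximation (Proposition \ref{prop:analytic_approx}), extracting from it not only flatness of the classical borders (which is all the paper extracts from it) but also the framing, since the $0$-jet part of the convergence controls $L_{A_0}$, the $1$-jet part controls the supporting lines, and Theorem \ref{thm:main_theorem_analytic} makes every approximant right-spherical, whence $L_{A_0}=\lim A_0^{(n)}V_a^{(n)}=A_0V_a$. Your auxiliary checks are also sound: Case 1 of Proposition \ref{prop:reflectivity_and_lines1} is indeed purely projective and pointwise (only the reflection law and $3$-reflectivity enter), so non-concurrency does hold in the smooth category, and it is used exactly where you say, to guarantee $V_a^{(n)}\to V_a$ and $A_0\neq V_a$, hence $A_0^{(n)}V_a^{(n)}\to A_0V_a$. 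If you write this up, make explicit that (i) the identity $L_{A'}=A'V_a$ is obtained only for $A'$ in the open set of $A$-vertices of orbits of $S$, i.e.\ near the base point, which suffices because a local projective billiard is a germ and $V_a$ is independent of $A'$, and (ii) the projections $\pi_G\circ\pi$ have rank one on $S^{(1)}$ and on the approximating surfaces, so jet convergence of the surfaces really does yield convergence of the tangent lines of the classical borders. As for trade-offs: the paper's route is shorter and reuses Proposition \ref{prop:reflectivity_and_lines1} wholesale, at the cost of accepting its real-analytic variant; yours needs only its elementary Case 1 in the smooth setting and otherwise recycles machinery already deployed in the proof of Theorem \ref{thm:main_theorem}, at the price of heavier convergence bookkeeping.
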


\begin{proof}
We just have to prove that $\alpha$, $\beta$ $\gamma$ are analytic, because in that case we can apply Proposition \ref{prop:reflectivity_and_lines1} (the same proof works also for real-analytic line-framed curves).

Indeed, let $\ell_a$, $\ell_b$, $\ell_c$ be the three lines supporting $a,b,c$. Fix any point $m_A=(A,L_A)\in\alpha$. Any point $B\in \ell_b$, defines a line $AB$ and the reflected line $AB^{\ast m_A}$ obtained from the projective reflection law at $m_A$. The latter intersect $\ell_c$ at a point $C$. By the implicit function theorem and since $B\mapsto AB^{\ast m_A}$ is analytic, the map $B\mapsto C$ is analytic. Hence the map $B\mapsto BC$ is analytic, and one can define a line $L_B$, analytically depending on $B$, and such that $L_B$ is the only line passing through $B$ for which the quadruple $(BA,BC,\ell_b,L_B)$ is harmonic. Hence we have found an analytic parametrization of $\beta$. The same can be done for $\gamma$ by replacing $L_B$ by $L_C$.
\end{proof}
\end{proof}

\begin{proof}[Proof of Theorem \ref{thm:main_theorem_multidim}]
Here $d\geq 3$. Suppose that one can find $\mathcal{B}=(\alpha, \beta,\gamma)$ a local smooth $3$-pseudo-reflective projective billiard. The idea of the proof is a consequence of \cite{glutkud2}, Theorem 26. This theorem uses the notion of \textit{pseudo-integral surfaces} which can be defined as follows: a pseudo-integral surface $S$ of a Pfaffian system $\mathcal{P}=(M,\mathcal{D},k;(\mathcal{D}_i)_i)$ is the same as an integral surface, except that the inclusion $T_xS\subset\mathcal{D}(x)$ and the transversality conditions only hold for certain $x$ lying in a subset $V\subset S$ of non-zero Lebesgue measure. 

Let $S$ be the set of triples $z=(p,q,r)$ where $p=(A,L_A)\in\alpha$, $q=(B,L_B)\in\beta$, and $r=(C,L_C)\in\gamma$ is defined by the condition that $AB$ and $BC$ are symmetric with respect to the projective reflection law at $q$. We can easily state that $S$ is a smooth pseudo-integral surface of $\mathcal{D}$, noticing that $T_zS\subset\mathcal{D}(z)$ if and only if $z$ is a triangular orbit of $\mathcal{B}$. The first lift $S^{(1)}\subset\gr_{2(d-1)}(M)$ of $S$ doesn't necessarily lie in $M\etoile$: the maps $dproj_G$, $G=A,B,C$, are of course of rank $1$ on the tangent planes of $S$, but not all tangent planes of $S$ lie in $\mathcal{D}$ nor are integral. If we denote by $V\subset S$ the set of points $z\in S$ for which $T_z S\subset\mathcal{D}(z)$, by \cite{glutkud2} Lemma 27 if $z$ is Lebesgue point of $V$ then $T_z S$ is an integral plane of $\mathcal{D}$. Furthermore, if we fix a $p=(z,T_zS)\in S^{(1)}$ where $z$ is a Lebesgue point of $V$, then as in the proof of \cite{glutkud2} Theorem 26 we can project a small neighborhood $U=U(p)$ in $S^{(1)}$ to a stratum $M'$ of $M\etoile$ (by orthogonal projection with respect to some local analytic Riemannian metric) so that the projected surface $S'\subset M'$ is a pseudo-integral surface of the Pfaffian system $\mathcal{P}'=(M',\mathcal{K},2(d-1),\ker d\pi)$. 

Applying \cite{glutkud2} Theorem 26, one can find a sequence of prolongations $\mathcal{P}^{(r)}=(M^{(r)},\ldots)$ of $\mathcal{P}'$ such that $M^{(r)}\neq \emptyset$ for all $r$. Therefore Theorem \ref{theorem:cartan_ku_ra} implies the existence of an analytic integral surface of $\mathcal{P}'$ in $M'\subset M\etoile$. Hence Proposition \ref{prop:reflective_billiard_distribution} implies the existence of a local $3$-reflective analytic projective billiard, which contradicts Theorem \ref{thm:main_theorem_multidim_analytic}.
\end{proof}

\end{document}